\def\k{\text{\begin{cursive}
\hspace{-0,1cm}\textcal{k}\hspace{-0,15cm}
\end{cursive}}}
\theoremstyle{plain}
\newtheorem{theorem}{Theorem}[section]
\theoremstyle{plain}
\newtheorem{lemma}[theorem]{Lemma}
\newtheorem{prop}[theorem]{Proposition}
\newtheorem{cor}[theorem]{Corollary}
\theoremstyle{definition}
\newtheorem{definition}{Definition}[section]
\newtheorem{defi}{Definition}[section]
\newtheorem{remark}{Remark}[section]
\newtheorem*{maintheorem*}{Main Theorem}
\newtheorem*{maincorollary*}{Main Corollary}
\DeclareFontFamily{U}{BOONDOX-calo}{\skewchar\font=50 }
\DeclareFontShape{U}{BOONDOX-calo}{m}{n}{
	<-> s*[1.05] BOONDOX-r-calo}{}
\DeclareFontShape{U}{BOONDOX-calo}{b}{n}{
	<-> s*[1.05] BOONDOX-b-calo}{}
\DeclareMathAlphabet{\mathcalb}{U}{BOONDOX-calo}{m}{n}
\SetMathAlphabet{\mathcalb}{bold}{U}{BOONDOX-calo}{b}{n}
\DeclareMathAlphabet{\mathbcalb}{U}{BOONDOX-calo}{b}{n}
\newcommand{\R}{\ensuremath{\mathbb{R}}}
\newcommand{\E}{\ensuremath{\mathbb{E}}}
\newcommand{\goto}{\ensuremath{\rightarrow}}
\numberwithin{equation}{section} \allowdisplaybreaks
\title[Stochastic
$3^{rd}$-grade fluids equations in	2D	and	3D]
{Invariant	measures	for a	class of	stochastic
	third	grade fluid	equations	in	$2D$	and	$3D$	bounded	domains
}
\date{\today}
\author[Yassine Tahraoui]{ Yassine Tahraoui}
\address[ Yassine Tahraoui]{
	Center for Mathematics and Applications (NovaMath),  NOVA	SST,	Portugal}
\email[Yassine Tahraoui]{tahraouiyacine@yahoo.fr}
\author[Fernanda Cipriano]{Fernanda Cipriano}
\address[Fernanda Cipriano]{
	Center for Mathematics and Applications (NovaMath), NOVA	SST and Department of Mathematics, NOVA	SST,	Portugal}
\email[Fernanda Cipriano]{cipriano@fct.unl.pt}
\begin{document}

	\begin{abstract}
		This work aims to	investigate	the	well-posedness	and	the	existence	of	ergodic	invariant	measures	for	a	class of	third grade fluid	equations	in	bounded	domain		$D\subset\mathbb{R}^d,d=2,3,$	in	the	presence	of	a	multiplicative	noise.	First,
		we show the existence of	a	martingale	solution	by	coupling	a	stochastic	compactness	and	monotonicity	arguments.	Then,	we	prove	a	stabilty	result,		which	gives	the	pathwise	uniqueness	of	the	solution	and	therefore	the	existence	of	strong	probabilistic	solution.	Secondly,	we	use	the	stability	result	to	show	that	the	associated	semigroup	is	Feller	and	by	using	\textit{"Krylov-Bogoliubov Theorem"}	we	get	the	existence	of	an	invariant	probability	measure.	Finally,	we	show	that	all	the	invariant	measures	are	concentrated	on	a	compact	subset	of	$L^2$,		which	leads	to	the	existence	of	an	ergodic	invariant	measure.
  		\end{abstract}

	\maketitle
			\begin{footnotesize}	
			\textbf{Keywords:} Third grade fluids,	Invariant	measure,	Stochastic PDE,	Well-posedness.\\[1mm]
			\hspace*{0.45cm}\textbf{MSC:} 35Q35,	37L40, 60H15,	 	60J25,	76A05. \\
		\end{footnotesize}

\section{Introduction}
In this work, we are concerned with the existence of		ergodic	invariant	measures
for a	class	of	 incompressible non-Newtonian fluids filling a  two or  three	dimensional   bounded domain under  Dirichlet	boundary	condition. A crucial step releys on the well-posedness of the stochastic fluid dynamic equations in order to define a convenient Markovian semigroup.	It is worth to recall that invariant measures correspond to stationary statistical solutions (or equilibrium states) which are relevant in the study of fluid flows, namely in the description and analysis of turbulent flows.

 Most studies on fluid dynamics have been devoted to Newtonian fluids, which are characterized by a linear relation between the shear stress and the strain rate	and	therefore	these fluids	are modelled by Navier-Stokes equations,	which	 has been studied	extensively from
	mathematical and physical perspectives. However,
	there exist many real	and industrial fluids  with nonlinear viscoelastic behavior  that does not obey Newton's law, and consequently  cannot be described by the classical  viscous Newtonian fluid models. These fluids include natural biological fluids such as blood, geological flows  and others, see \textit{e.g} \cite{DR95,FR80,yas-fer}	and	their	references. Therefore, it is necessary to consider  more general fluid models. 
Recently,  special attention has been devoted to the study of non-Newtonian viscoelastic fluids of differential type,	see	\textit{e.g.}	\cite{Cioran2016}.	On	the	other	hand, several simulations studies have been performed by using the third grade fluid models, in order to understand and explain the characteristics of several nanofluids,	see	\textit{e.g.} 
\cite{PP19,RHK18} and references therein,	where
 nanofluids  are  engineered colloidal suspensions of nanoparticles  in a base fluid as water, ethylene glycol and oil, which exhibit enhanced thermal conductivity compared to the base fluid, which turns out to be of  great  potential to be used in  technology	and microelectronics.  
Therefore  the  mathematical analysis of third grade fluid equations is	important	to	understand	the	behaviours	of	such	fluids.\\

Now,	Let us briefly recall how to obtain the fluid equations for non-Newtonian fluids of differential type,	for	more	details	about	Kinematics	of	such	fluids	we	refer	to	\cite{Cioran2016}.		 Denote  the velocity field	of	the fluid by	$y$  and introduce the Rivlin-Ericksen kinematic tensors   $A_n, n\geq 1$,	see	\cite{RE55},		 defined by  
\begin{align*}
A_1(y)&=\nabla y+\nabla y^T;	\,A_n(y)=\dfrac{d}{dt}A_{n-1}(y)+A_{n-1}(y)(\nabla y)+(\nabla y)^TA_{n-1}(y), \quad n=2,3,\cdots
\end{align*}

The constitutive law of fluids of grade $n$ 
reads	$
\mathbb{T}=-pI+F(A_1,\cdots,A_n), 
$
where $\mathbb{T}$ is the Cauchy stress tensor, $p$ is the pressure and 
 $F$ is an isotropic polynomial function    of degree $n$, subject to the usual	requirement of material frame indifference,	see	\textit{e.g.}	\cite{Cioran2016}.
The constitutive law of   third grade fluid	$(n=3)$ is given by the following  equation	 
\begin{align*}
\mathbb{T}=-pI+\nu A_1+\alpha_1A_2+\alpha_2A_1^2+\beta_1 A_3+\beta_2(A_1A_2+A_2A_1)+\beta_3tr(A_1^2)A_1,
\end{align*}
  where $\nu$ is the viscosity and 	$(\alpha_i)_{1,2}, (\beta_i)_{1,2,3}$ are material moduli. 		We	recall	that	the momentum equations, established by the Newton's second law, are given by
$\dfrac{Dy}{Dt}=\dfrac{dy}{dt}+y\cdot \nabla y=div(\mathbb{T}).$
If $\beta_i=0,i=1,2,3$, the constitutive equations correspond to a second grade fluids. It has been shown  that the Clausius-Duhem
inequality and the assumption that the Helmholtz free energy is a minimum in
equilibrium requires the   viscosity and   material moduli to satisfy
\begin{align}\label{secondlaw}
\nu \geq 0,\quad \alpha_1+\alpha_2=0, \quad \alpha_1\geq 0. 
\end{align}
Although second grade fluids are mathematically more treatable,	dealing  with  several non-Newtonian fluids, the rheologists have not
confirmed these restrictions \eqref{secondlaw}, thus give  the conclusion that the fluids that have been tested are not fluids of second grade but are fluids that are characterized by a different constitutive structure, we refer to \cite{FR80} and references therin for more details.  Following \cite{FR80}, in order to allow the motion of the fluid to be compatible with thermodynamic, it should be imposed that

\begin{equation}\label{third-grade-paremeters}
\nu \geq 0, \quad \alpha_1\geq 0, \quad |\alpha_1+\alpha_2 |\leq \sqrt{24\nu\beta}, \quad \beta_1=\beta_2=0, \beta_3=\beta \geq 0.
\end{equation}	
Consequently,	 the velocity field $y$  satisfies
the incompressible third grade fluid equations
\begin{align}
\label{third-grade-fluids-equations}
\begin{cases}
&\partial_t(v(y))-\nu \Delta y+(y\cdot \nabla)v(y)+\displaystyle\sum_{j=1}^dv(y)^j\nabla y^j-(\alpha_1+\alpha_2)\text{div}(A(y)^2) -\beta \text{div}[tr(A(y)A(y)^T)A(y)]\\[0.2cm]
&\qquad= -\nabla \mathbf{P} +U,\quad	\text{div}(y)=0,	\quad
v(y):=y-\alpha_1\Delta y, \quad A(y):= \nabla y+\nabla y^T,
\end{cases}
\end{align}
where the viscosity $\nu$ and 
the material moduli	 $\alpha_1,\alpha_2$, $\beta$ 	verify	\eqref{third-grade-paremeters},	  $\mathbf{P}$ denotes the pressure	and	 $U$	denotes	an	external	force. 	Notice	that	if	$\alpha_1=\alpha_2=0$	and $\beta$=0,	we	recover		the	Navier	Stokes	equations.		From mathematical point of view,  fluids of grade  $n$ constitute an hierarchy of fluids with  increasing complexity and more  nonlinear terms,  then comparing with Newtonian (grade $1$) or second grade fluids,
third grade fluids are  more complex  and require more involved analysis.\\
\\
Without	exhaustiveness,	when	$	\alpha_1>0$, the existence	of	local	solution	in	the Sobolev space $H^3$	of	the	third grade fluids equations \eqref{third-grade-fluids-equations} with Dirichlet  boundary condition were  studied  in  \cite{AC97},	see	also	\cite{SV95}.	Later on \cite{Bus-Ift-1},	the	authors	showed	the	global	existence	in	$\mathbb{R}^d,d=2,3$	for	$H^2$-valued	solution	and	uniqueness	in	2D,	we	recall	that	uniqueness	in	3D	for	$H^2$-valued	solution	is	an	open	question.	In	
 \cite{Bus-Ift-2},  
supplementing the equation 	\eqref{third-grade-fluids-equations}	with a Navier-slip boundary condition, the authors  
established the existence of a global solution for initial conditions in $H^2$
and proved that uniqueness holds in 2D.
In \cite{ Cip-Did-Gue},  the authors extended  the later deterministic results  to  stochastic setting 	in	2D. Recently, the	authors	  in \cite{yas-fer-2}  proved	the existence and uniqueness of $H^3$-local (up to a certain positive stopping time)	adapted		solution to the stochastic third grade fluids equations with Navier-slip boundary conditions	in	2D	and	3D	bounded	domain.	Let	us	refer	to	\cite{yas-fer,Tah-Cip-2}	and	their	references	for	other		questions	related	to	fluids	of	third	grade		with	Navier-boundary	conditions.\\

We emphasize that	the construction of a	solution with less regular initial data is  challenging  due to  complicated nonlinearities in	\eqref{third-grade-fluids-equations}	and	one	needs	 an additional	restriction	on	the	parameters	$\alpha_1,\alpha_2,\beta$	and	$\nu$	to	establish	some	results.	Indeed,		when	the	initial	data	belong	only	to	$H^1$	and	$\alpha_1>0$,	the	author	in	\cite{Paicu2008}	showed	the	existence	of	global	weak	solution	for	\eqref{third-grade-fluids-equations}	in	$\mathbb{R}^d,d=2,3$,	under	some	extra	restriction	on	the	parameters,	which	permits	the	application	of			a	monotonicity	techniques.	Then,		the validity of the energy equality and a weak-strong uniqueness result has been shown.	We	refer	also	to		\cite{Busuioc-Iftimie-Paicu08}	for an existence result in the stationary case,	in the presence of external forces and homogeneous Dirichlet boundary condition.		In	the	stochastic	setting,	let	us	refer	to	\cite{Nguyen-Tawri-Temam21},	where	the	authors		showed	 the global existence	of  solutions to stochastic
  		equations with a monotone operator	driven by a  L\'evy noise, including the	Ladyzenskaya-Smagorinsky type	equations	\cite{Ladyzhenskaya67}.
  		Recently,	the	authors	in	\cite{Raya-Fernanda-Yassine23}	proved	the	existence	of	weak	probabilistic	(martingale)	solution	to	\eqref{third-grade-fluids-equations}	in	the	presence	of	a	multiplicative	noise	by	coupling	monotonicity	and	stochastic	compactness	approach.	On the other hand, constructing solution with $L^2$-initial data is more challenging.	In	\cite{Paicu},	the	authors	proved	the	global	well-posedness	in	$\mathbb{R}^3$	with	free	divergence	initial	data	belongs	to	$L^2(\mathbb{R}^3)$	when	$\alpha_1=0$,	where	a	monotonocity	method	is	used	under	some	extra	restriction	on	the	parameters.	
  			Our	aim	in	this	work	is	to	consider	\eqref{third-grade-fluids-equations}	with	$\alpha_1=0$	in	the	presence	of	deterministic	external	force	$F$	and	a	stochastic	multiplicative	noise	driven by $Q$-Wiener process	$N$	\textit{i.e.}	$U=F+N$,	namely,	the	equations	read
		\begin{align}\label{equationV1}
	\dfrac{\partial	y}{\partial	t}-\nu \Delta y+(y\cdot \nabla)y-\alpha\text{div}(A(y)^2) -\beta \text{div}(|A(y)|^2A(y))=F-\nabla \textbf{P}+ N; \,\,
	\text{div}(y)=0.
	\end{align}	The		term			$\alpha\text{div}(A(y)^2)$	destroys	the	monotonocity	property	of		$-\nu \Delta y-\beta \text{div}(|A(y)|^2A(y))$	and	some	restriction	on	the	parameter	should	be	imposed,	namely	$\frac{\alpha^2}{2\nu\beta}\in]0,1[$	to	show	the	well-posedness	of	\eqref{equationV1},	see	Section	\ref{Sec2}	for	the	precise	assumptions.	In a relationship with the attempts to build fluid dynamics	models where  global well-posedness in 3D	holds, we refer to the pioneering work	\cite{Ladyzhenskaya67},	where	the	author	proposed		a	new equations to describe	the motions of viscous	incompressible fluids	with	viscosity	depends	on	the	gradient	of	the	velocity.
	Finally,	we wish to draw the reader’s attention to the fact that	\eqref{equationV1}	could also be considered as a	singular perturbation of the Navier-Stokes equations and the study of  its	singular limits is an interesting question and will be considered in future work.\\

Concerning	the	invariant	measures	in	fluids	dynamics,	many	authors	have	been	interested	in	the	subject.	Newtonian fluid dynamics has been widely studied,	without	exhaustiveness,	let	us	mention	\cite{Flan-94}	where	the	author	proved the existence of invariant measures	by using	the	dissipation	properties	for	the	2D	stochastic	Navier-Stokes	equations.	More	recently,	
the existence of an invariant measure to stochastic 2D Navier–Stokes equations in	the	presence	of	
multiplicative noise  in unbounded domains	were	proved	in	\cite{Brez2017}	by	using	$bw$-Feller	property	of	the	semigroup	associated	with	the	dynamics.	We	refer	\textit{e.g.}	to		\cite{Brez2017,Flan-94,Hairer2006}	and	their	references		about	invariant	measures	for	Newtonian	fluids.
On the other hand, the behaviors of non-Newtonian stochastic fluids are much less studied.	In	\cite{Guo2010},	the	authors	studied		the	martingale solutions and stationary solutions	for	a	stochastic non-Newtonian fluids.			The stochastic    non-Newtonian	bipolar fluid	equations	in	the	presence	of	L\'evy type	noise	were	 investigated	in	\cite{Hausenblas2016},	where	the	authors	showed	the	existence	of	unique	solution	and	an	ergodic	invariant	measure.	Finally,		the	authors	studied		in	\cite{Zhao2014}
the large time behaviors	 of solutions	to	\eqref{equationV1} in	the	deterministic	setting,	namely	with	$F=N=0$	in	$\mathbb{R}^3$.	For	the	best	of	our	knowledge,	we	are	not	familiar	with	results	about	the	investigation	of	invariant	measures	for	non-Newtonian	fluids	of	differential	type	and	our	aim	is	to	present	a	first	result	in	this	direction,	namely	the	invariant	measures	associated	with	the	dynamic	governed	by	\eqref{equationV1}.		We emphasize that the strong nonlinearities in  \eqref{third-grade-fluids-equations} make it very difficult the study of	the qualitative properties of the solution and our goal is to study similar questions for the \eqref{third-grade-fluids-equations} in a more general framework in future work.				Our aim is twofold:	first,		we	show		the	existence	and	uniqueness	of	probablistic	strong	solution			for	\eqref{equationV1}	supplemented	with	Dirichlet	boundary	conditions,	see	Theorem	\ref{exis-thm-strong}.	Then,	we	show	the	existence	of	an	ergodic	invariant	measures	in	2D	and	3D	settings,	see	Theorem	\ref{THm-invariant-measure-1}	and	Theorem	\ref{Theorem-ergodic-1}.\\

The article is organized as follows:
in Section \ref{Sec2}, we state  the equations and precise the  appropriate functional  and stochastic settings.	Then,	we	present	the	assumptions	on	the	data.  Section \ref{Sec-main-result} is devoted to the presentation of  the main results of this work. In section \ref{Sec-martingale}, we introduce an approximated system, and we prove the existence of martingale  solution by combining a stochastic compactness arguments and	monotonicity	techniques	to	deal	with	the	non	linear	terms.	Then,	we	show	a	stability	result	and	we	obtain	the	pathwise	uniqueness.	Consequently,	the	existence	of	a	strong	probabilistic	solution. Finally,	Section \ref{Section-invariant-measure} concerns the proof	of	the	existence	of	an	ergodic	invariant	measure,	under appropriate assumptions on the data.

\section{Content of the study}\label{Sec2}
Let	$\mathcal{W}$ be a cylindrical  Wiener process in	a separable Hilbert space	$\mathbb{H}$,	defined on a complete probability space	$(\Omega,\mathcal{F},P)$,  endowed with  the right-continuous filtration $\{\mathcal{F}_t\}_{t\in[0,T]}$. We assume that $\mathcal{F}_0$ contains all the P-null subset of $\Omega$ (see Subsection \ref{Noise-section} for the assumptions on the noise).
The goal  is to study the well-posedness	and	invariant	measures	of	 a	class third grade fluid. Let	$T>0$,	the fluid  fills bounded and simply connected domain $D  \subset  \mathbb{R}^d,\; d=2,3,$ with regular boundary $\partial D$, and 
its dynamics is governed by the following equations

\begin{align}\label{I}
\begin{cases}
dy=\big(F-\nabla \textbf{P}+\nu \Delta y-(y\cdot \nabla)y+\alpha\text{div}(A^2) +\beta \text{div}(|A|^2A)\big)dt+ G(\cdot,y)d\mathcal{W} \quad &\text{in } \Omega\times D \times (0,T),\\
\text{div}(y)=0 \quad &\text{in } \Omega\times D \times (0,T),\\
y=0 &\text{on } \Omega\times \partial D \times (0,T),\\
y(x,0)=y_0(x) \quad &\text{in } \Omega\times D ,
\end{cases}
\end{align}
where $y:=(y_i)_{i=1}^d$ is the velocity of the fluid, $\textbf{P}$ 
is the pressure and $F$ corresponds to the  external force.
The operator   $A$	is defined by
$ A:=A(y)=\nabla y+\nabla y^T=2\mathbb{D}(y)$.
In addition,  $\nu$ denotes  the viscosity of the fluid
and  $\alpha$, $\beta$  
are material moduli.
 The diffusion coefficient $G$ will be specified in Subsection \ref{Noise-section}.
\subsection{Notations	and		the functional setting }
Let	$T>0$,	for a Banach space $E$, we define
$$  (E)^k:=\{(f_1,\cdots,f_k): f_l\in E,\quad l=1,\cdots,k\}\;\text{ for	positive integer }	k.$$
In	the	following	$d=2,3$.
The unknowns in the system \eqref{I} are 
the velocity and the  scalar pressure random fields: \begin{align*}
y:\Omega\times D\times [0,T]&\to \mathbb{R}^d, \qquad\qquad\qquad			p:\Omega\times  D\times [0,T] \to \mathbb{R}\\
(\omega,x,t)&\mapsto (y^i(\omega,x,t))_{i=1}^d, \;\qquad(\omega,x,t)	\mapsto  p(\omega,x,t).
\end{align*}

Let	$m\in	\mathbb{N}^*$	and	$1\leq	p<	\infty$,	we	denote	by	$W^{m,p}(D)$	the	standard	Sobolev	space	of	functions	whose	weak	derivative	up	to	order	$m$	belong	to	the	Lebesgue	space	$L^p(D)$	and	set	$H^m(D)=W^{m,2}(D)$	and	$H^0(D)=L^2(D)$.
Following	\cite[Thm.	1.20	$\&$	Thm.	1.21	]{Roubicek},	we	have the continuous embeddings:
\begin{align}\label{Sobolev-embedding}
\text{ if }p<d,\quad &W^{1,p}(D) \hookrightarrow L^a(D),\ \forall a \in [1,p^*]\text{ and 
	it is  compact if }a \in [1,p^*),\nonumber
\\
\text{ if }p=d,\quad & W^{1,p}(D) \hookrightarrow L^a(D),\ \forall a <+\infty\text{ 
	is  compact}, 
\\
\text{ if }p>d,\quad &W^{1,p}(D) \hookrightarrow C(\overline D)\text{  
	is compact, }\nonumber
\end{align}
where $p^*=\frac{pd}{d-p}$ if $p<d$, 	denotes	the Sobolev embedding exponent.  Let us denote by ${\bf n}$ the exterior unit normal to the boundary $\partial D$,
and introduce the following spaces:
\begin{equation}
\begin{array}{ll}
\mathcal{V}&:=\{y \in (C^\infty_c(D))^d \,\vert \text{ div}(y)=0\},\\
H&:=\text{The	closure	of	}\mathcal{V}	\text{	in	}(L^2(D))^d	=\{ y \in (L^2(D))^d \,\vert \text{ div}(y)=0 \text{ in } D \text{ and } 
y\cdot {\bf n} =0\;\text{ on }\partial D\}, \nonumber\\[1mm]
V&:=\text{The	closure	of	}\mathcal{V}	\text{	in	}(H^1(D))^d=\{ y \in (H^1_0(D))^d \,\vert \text{ div}(y)=0 \text{ in } D  \}.
\end{array}
\end{equation}

Now, we recall the Leray-Helmholtz projector $\mathbb{P}: (L^2(D))^d \to H$, which is a linear bounded operator characterized by the following $L^2$-orthogonal decomposition
$v=\mathbb{P}v+\nabla \varphi,\;  \varphi \in H^1(D). $

Now, let us introduce  the scalar product between two matrices  $
A:B=tr(AB^T)$
and denote $\vert A\vert^2:=A:A.$
The divergence of a  matrix $A\in \mathcal{M}_{d\times d}(E)$ is given by 
$(\text{div}(A)_i)_{i=1}^{i=d}=(\displaystyle\sum_{j=1}^d\partial_ja_{ij})_{i=1}^{i=d}¨. $
The space $H$ is endowed with the  $L^2$-inner product $(\cdot,\cdot)$ and the associated norm $\Vert \cdot\Vert_{2}$. We recall that
\begin{align*}
(u,v)&=\sum_{i=1}^d\int_Du_iv_idx, \quad  \forall u,v \in (L^2(D))^d,\quad
(A,B)&=\int_D A: Bdx ; \quad  \forall A,B \in \mathcal{M}_{d\times d}(L^2(D)).
\end{align*}
On the functional space $V$,
we will consider  the following inner product
\begin{equation*}
\begin{array}{ll} 
(u,z)_V&:=(u,z)+(\nabla	u,\nabla	z),
\end{array}
\end{equation*}
and denote by $\Vert \cdot\Vert_V$ the corresponding norm. The usual norms on the classical Lebesgue and Sobolev spaces $L^p(D)$ and $W^{m,p}(D)$ will be denoted by   $\|\cdot \|_p$ and 
$\|\cdot\|_{W^{m,p}}$, respectively.
In addition, given a Banach space $E$, we will denote by $E^\prime$ its dual.
For $T>0$, $0<s<1$ and $1\leq p <\infty$, let us   recall the definition of the fractional Sobolev space  
$$W^{s,p}(0,T;E):=\{ f \in L^p(0,T;E) \; \vert \; \Vert f\Vert_{W^{s,p}(0,T;E)} <\infty\},$$
where $\Vert f \Vert_{W^{s,p}(0,T;E)}= \Big( \Vert f\Vert_{L^p(0,T;E)}^p+\displaystyle\int_0^T\int_0^T\dfrac{\Vert f(r)-f(t)\Vert_E^p}{\vert r-t\vert^{sp+1}}drdt\Big)^{\frac{1}{p}}$.\\

Since	$L^{\infty}(0,T;H)$	is	not	separable,	
it is	convenient	to	introduce	the	following	space:
$$L^2_{w-*}(\Omega;L^\infty(0,T;H))=\{ 	u:\Omega\to	L^\infty(0,T;H)	\text{	is	 weakly-* measurable		and	}	\E\Vert	u\Vert_{L^\infty(0,T;H)}^2<\infty\},$$
where	weakly-* measurable	stands	for	the	measurability	when	$L^\infty(0,T;H)$	is	endowed	with	the	$\sigma$-algebra	generated	by	the	Borel	sets	of	weak-*	topology,	see	\textit{e.g.}	\cite[Thm. 8.20.3]{Edwards}		$\&$ \cite[Rmq. 2.1]{Vallet-Zimm1}.	
For the sake of simplicity, we do not distinguish between scalar, vector or matrix-valued   notations when it is clear from the context. In particular, $\Vert \cdot \Vert_E$  should be understood as follows
\begin{itemize}
	\item $\Vert f\Vert_E^2= \Vert f_1\Vert_E^2+\cdots+\Vert f_d\Vert_E^2$ for any $f=(f_1,\cdots,f_d) \in (E
	)^d$.
	\item $\Vert f\Vert_{E}^2= \displaystyle\sum_{i,j=1}^d\Vert f_{ij}\Vert_E^2$ for any $f\in \mathcal{M}_{d\times d}(E)$.
\end{itemize}

Throughout the article,  we denote by $C,C_i, i\in \mathbb{N}$,   generic constants, which may vary from line to line.
We also consider the trilinear form 
$$
b(\phi,z,y)=(\phi\cdot \nabla z,y)=\int_D(\phi\cdot \nabla z)\cdot y dx, \quad \forall \phi,z,y \in V,$$
which verifies $b(y,z,\phi)=-b(y,\phi,z),\quad\forall  y,  z,\phi \in V$.


\subsection{The stochastic setting}\label{Noise-section}
Consider a cylindrical Wiener process $\mathcal{W}$ 
defined on the	filtred	probability	space $(\Omega,\mathcal{F},P;(\mathcal{F}_t)_t)$, which can be written as 
$\mathcal{W}(t)= \sum_{\k \ge 1} f_\k \beta_\k(t),$
where $(\beta_\k)_{\k\ge 1}$ is a sequence of  mutually independent real valued standard Wiener processes and $(f_\k)_{\k\ge 1}$ is a complete orthonormal system in a separable Hilbert space $\mathbb{H}$.
Recall that 
the sample paths  of $\mathcal{W}$ take values 
in a larger Hilbert space $H_0$ such that   $\mathbb{H}\hookrightarrow H_0$ defines a Hilbert–Schmidt
embedding. For example, the space $H_0$ can be defined as follows  
$$ H_0=\bigg\{ u=\sum_{\k \ge 1}\gamma_{\k}f_\k\;\vert\;  \sum_{ \k\geq 1} \dfrac{\gamma_\k^2}{\k^2} <\infty\bigg\}
\text{	endowed with the norm	}
 \Vert u\Vert_{H_0}^2=\sum_{ \k\geq 1} \dfrac{\gamma_\k^2}{\k^2}, \;  u=\sum_{\k \ge 1}\gamma_kf_\k.$$
Hence,  $P$-a.s. the trajectories of $\mathcal{W}$ 
belong to the space $C([0,T],H_0)$ (cf. \cite[Chapter 4]{Daprato}).\\


Next, we will precise the assumptions on the data.
\subsubsection{Definition of the diffusion coefficient and assumptions}\label{subsec-multiplicative}
Let us consider 
a family of Carath\'eodory functions
 $\sigma_\k: [0,T]\times \R^d\mapsto \R^d, \; \k \in\mathbb{N},$  
  which  satisfies
$\sigma_\k(t,0)=0$\footnote{Note that the same can be reproduced with: $\displaystyle\sum_{\k\ge 1} \Vert\sigma_\k^2(t,0)\Vert_{2}^2\in	L^1(0,T).$},  and   there exists $L > 0$  such that for a.e. $t\in (0,T)$,  
\begin{align}
\label{noise1}
\quad &\sum_{\k\ge 1}\big| \sigma_\k(t,\lambda)-\sigma_\k(t,\mu)\big|^2  \leq L  |\lambda-\mu|^2;	\quad	\forall	\lambda,\mu \in \R^d.
\end{align}

We notice that, in particular, \eqref{noise1} gives 
$\sum_{\k\ge 1}
 |\sigma_\k(t,\lambda)|^2\le L\,|\lambda|^2.
$ 

For each $t\in[0,T]$ and $H$-valued	predictable	process	$y$, we introduce the  predictable process $G(t,y)$ with values in the space of  Hilbert-Schmidt operators
$$
G(t,y): \mathbb{H}\goto (L^2(D))^d, \qquad G(t,y)f_\k= \{ x \mapsto \sigma_\k\big(t,y(x)\big)\},
\quad \k \ge 1.
$$
	 The stochastic integral 
	$\displaystyle\int_0^tG(\cdot,y)d\mathcal{W}:=\sum_{\k\ge 1}\int_0^t\sigma_\k(\cdot,y)d\beta_\k$ 
	is  well-defined	continuous $(\mathcal{F}_t)_{t}$-martingale with values in $(L^2(D))^d$.	In the sequel, given a random variable $\xi$ with values in a Polish space $E$, we  will denote by 
	$\mathcal{L}(\xi)$ its law
	$$
	\mathcal{L}(\xi)(\Gamma)=P(\xi\in\Gamma) \quad \text{for any Borel subset } \Gamma \text{ of } E.
	$$
\subsection{Assumptions}
	The	parameters  $\nu,$   $\alpha$	and $\beta$  
	 satisfy 
	\begin{align}\label{condition1}
	\nu \geq 0, \quad  \beta > 0 \quad |\alpha|\leq \sqrt{2\nu\beta}.
	\end{align}
We	recall	that	\eqref{condition1}	ensures	a	monotonicity	property	of	a	part	of	the	nonlinear	operator	of	\eqref{I}.	More	precisely,
	let	us	introduce	the	following	Banach	space	$(X,\Vert	\cdot\Vert_{X})$
$$	X=\{	u\in	(W^{1,4}(D)\cap	H_0^1(D))^d,	\quad	\text{div	}	u=0\},	\quad	\text{	where		}\Vert	\cdot\Vert_X:=\Vert	\cdot\Vert_{W^{1,4}_0}.$$
Indeed,	we	recall	that	$W^{1,4}(D)\cap	H_0^1(D)=W^{1,4}_0(D)$	endowed	with	$\Vert	\cdot\Vert_{W^{1,4}}$-norm	is		Banach	space	where		$$\Vert	u\Vert_{W^{1,4}}^4=\int_D	\vert	u\vert^4dx+\int_D\vert	\nabla	u\vert^4dx.$$
		Thanks	to	Poincaré	inequality,	see	\textit{e.g.}	\cite[Theorem	1.32]{Roubicek},	there	exists		$C_P>0$	such	that	$	\Vert	u	\Vert_{4}\leq	C_P\Vert	\nabla	u\Vert_4$	for	any	$u\in	W^{1,4}_0(D)$.	Set	$	\Vert	u	\Vert_{_{W^{1,4}_0}}=\Vert	\nabla	u\Vert_4$	then	$\Vert	\cdot\Vert_{_{W^{1,4}_0}}$	and		$\Vert	\cdot\Vert_{W^{1,4}}$	are	equivalent	norms	on	$W^{1,4}_0(D).$	Thus,	$(X,\Vert	\cdot\Vert_{W^{1,4}_0})$	is	a	Banach	space,
	as	a closed subspace 	of	$(W^{1,4}(D))^d$.
	Finally,	let	us	recall
	Korn	inequality	(see	\cite[Theorem 1.33]{Roubicek}):	there	exist	$C_K>0$	such	that
	\begin{align}\label{Korn-ineq}
	\Vert	u\Vert_{W^{1,4}_0}	\leq	C_K\Vert	A(u)	\Vert_4,\quad	\forall	u\in	W^{1,4}_0(D).	
	\end{align}
Denote	by	$\langle	\cdot,\cdot\rangle:=\langle	\cdot,\cdot\rangle_{X^\prime,X	}$
and	define	the	following	operator
\begin{align*}
T:X&\to	X^\prime\\
u&\mapsto	-\nu(1-\epsilon_0)\Delta	u-\alpha\text{ div}(A(u)^2)-\beta(1-\epsilon_0)\text{ div}(\vert	A(u)\vert^2A(u)),
\end{align*}
where	$\epsilon_0:=1-\sqrt{\dfrac{\alpha^2}{2\nu\beta}}\in]0,1[$.	Following		\cite[Lemma	2.5]{Paicu},	we	have
\begin{lemma}\label{Lemma-monotone}
	$T$	is	a	monotone	operator	\textit{i.e.}
	$	\langle	T(u)-T(v),u-v\rangle\geq0,\quad	\forall	u,v\in	X.$
\end{lemma}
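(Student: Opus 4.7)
The plan is to integrate by parts, rewrite everything in terms of the symmetric tensors $A(u),A(v),A(u-v)$, and then reduce monotonicity to a pointwise algebraic estimate that is closed by Young's inequality with coefficients calibrated by the critical value of $\epsilon_0$. Since $u-v\in X$ is divergence free and vanishes on $\partial D$, integration by parts yields
\begin{equation*}
\langle T(u)-T(v),u-v\rangle = \nu(1-\epsilon_0)\|\nabla(u-v)\|_2^2 + \tfrac{\alpha}{2}\int_D (A(u)^2-A(v)^2):A(u-v)\,dx + \tfrac{\beta(1-\epsilon_0)}{2}\int_D (|A(u)|^2A(u)-|A(v)|^2A(v)):A(u-v)\,dx,
\end{equation*}
where I use that $A(u)^2$ and $|A(u)|^2A(u)$ are symmetric so that testing against $\nabla(u-v)$ is the same as testing against $\tfrac{1}{2}A(u-v)$. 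The divergence-free identity $\int_D|A(w)|^2\,dx = 2\|\nabla w\|_2^2$ for $w\in V$ lets me pass freely between the $\nabla(u-v)$ and $A(u-v)$ forms of the dissipation.

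I would then establish two sharp pointwise identities for symmetric matrices $A, B$, with $E=A-B$ and $S=(A+B)/2$. Expanding $A=S+E/2$, $B=S-E/2$ gives the matrix analogue of the $4$-Laplacian monotonicity,
\begin{equation*}
\bigl(|A|^2A-|B|^2B\bigr):E = \tfrac{|A|^2+|B|^2}{2}|E|^2 + 2(S:E)^2,
\end{equation*}
while $A^2-B^2 = SE+ES$ gives $(A^2-B^2):E = 2\operatorname{tr}(SE^2)$. The decisive ingredient is that $E=A(u-v)$ and $S=\tfrac{1}{2}A(u+v)$ are both trace free, coming from $\operatorname{div}(u-v)=\operatorname{div}(u+v)=0$. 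In 2D this already forces $E^2 = \tfrac{|E|^2}{2}I$ by Cayley--Hamilton, so $\operatorname{tr}(SE^2) = \tfrac{|E|^2}{2}\operatorname{tr}(S)=0$ pointwise, and the quadratic term vanishes. In 3D I would use the sharp bound $\sum_i\lambda_i^4 \le \tfrac{1}{2}(\sum_i\lambda_i^2)^2$ for trace-free triples (extremal at eigenvalues $(2,-1,-1)$ up to scaling), together with Cauchy--Schwarz applied after subtracting the common mean of the $s_i$, to obtain a pointwise control of $|\operatorname{tr}(SE^2)|$ by $|E|$ and $|S|$.

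To conclude, these pointwise bounds are combined via Young's inequality, whose parameters are fixed so that the indefinite quadratic contribution is exactly absorbed by the Laplacian term and the cubic (monotone) term produced by the identity above. The balance relies on the single identity $\nu(1-\epsilon_0)\,\beta(1-\epsilon_0) = \tfrac{\alpha^2}{2}$, that is, $(1-\epsilon_0)^2 = \alpha^2/(2\nu\beta)$, which is the very definition of $\epsilon_0$ in the statement; the hypothesis $|\alpha|\le\sqrt{2\nu\beta}$ in \eqref{condition1} guarantees $\epsilon_0\in[0,1]$, so the choice is admissible.

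The main obstacle is the 3D estimate: with only the generic bound $|\operatorname{tr}(SE^2)|\le\|S\|_{\mathrm{op}}|E|^2$, Young's inequality would demand $\nu\beta(1-\epsilon_0)^2 \ge \alpha^2$, which is \emph{twice} what the prescribed $\epsilon_0$ provides. The argument therefore has to exploit the trace-free structure $\operatorname{tr}(E)=\operatorname{tr}(S)=0$ to recover the missing factor of $2$ and hit the sharp threshold dictated by the definition of $\epsilon_0$; this is the step where the divergence-free constraint is genuinely used (beyond the mere integration-by-parts identity for $\|A(w)\|_2^2$), and it explains why the particular functional form of $\epsilon_0$ is fixed by the relation $\alpha^2=2\nu\beta(1-\epsilon_0)^2$.
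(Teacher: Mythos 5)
Your proposal is correct. Bear in mind, though, that the paper does not actually prove Lemma \ref{Lemma-monotone}: it simply invokes \cite[Lemma 2.5]{Paicu}, so your argument is best read as a self-contained reconstruction of that reference's proof rather than an alternative to anything in the present text. All the key steps check out: the integration by parts (valid because $A(u)^2$ and $|A(u)|^2A(u)$ are symmetric and $\|A(w)\|_2^2=2\|\nabla w\|_2^2$ for divergence-free $w\in X$); the identity $(|A|^2A-|B|^2B):E=\frac{|A|^2+|B|^2}{2}|E|^2+2(S:E)^2$, which makes the quartic term dominate $\bigl(|S|^2+\frac14|E|^2\bigr)|E|^2$ pointwise; the vanishing of $\operatorname{tr}(SE^2)$ in 2D via Cayley--Hamilton; and the AM--GM balance $\frac{\nu(1-\epsilon_0)}{2}+\frac{\beta(1-\epsilon_0)}{2}|S|^2\ge\sqrt{\nu\beta}\,(1-\epsilon_0)\,|S|=\frac{|\alpha|}{\sqrt2}|S|$, which is exactly what the definition $(1-\epsilon_0)^2=\alpha^2/(2\nu\beta)$ delivers, so the required pointwise constant is $|\operatorname{tr}(SE^2)|\le\frac{1}{\sqrt2}|S|\,|E|^2$. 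Two small corrections to your 3D step: $\sum_i\lambda_i^4=\frac12\bigl(\sum_i\lambda_i^2\bigr)^2$ is an \emph{identity} for trace-free triples (Newton's identities with $e_1=0$), not merely a sharp inequality; and carrying out your Cauchy--Schwarz after centering $\lambda_i^2$ at $\frac13|E|^2$ actually yields $|\operatorname{tr}(SE^2)|\le\frac{1}{\sqrt6}|S|\,|E|^2$, which beats the needed $\frac{1}{\sqrt2}$ — so the argument closes with room to spare rather than ``exactly'' at the threshold (consistent with the fact that \eqref{condition1} is a sufficient, not sharp, condition). The only caveat is that $\epsilon_0\in\,]0,1[$ as used throughout the paper requires $0<|\alpha|<\sqrt{2\nu\beta}$ and $\nu>0$ strictly, but that is the paper's standing assumption, not a gap in your argument.
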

Consequently,	setting	
\begin{equation}
\label{DEF_S}
S(u):=-\nu\Delta	u-\alpha\text{ div}(A(u)^2)-\beta\text{ div}(\vert	A(u)\vert^2A(u)),\end{equation} we 	obtain	the result:
\begin{cor}\label{Monotone-S-ope}	For	$\epsilon_0\in]0,1[$,
	$S$	is	a	monotone	operator	\textit{i.e.}
	$$	\langle	S(u)-S(v),u-v\rangle\geq0,\quad	\forall	u,v\in	X.$$
\end{cor}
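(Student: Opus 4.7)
The plan is to write $S$ as the sum of the already-monotone operator $T$ supplied by Lemma~\ref{Lemma-monotone} and a positive multiple of a second operator whose monotonicity is classical. A direct comparison of the two definitions gives
\begin{equation*}
S(u)=T(u)+\epsilon_0\,R(u),\qquad R(u):=-\nu\Delta u-\beta\,\text{div}(|A(u)|^2A(u)),
\end{equation*}
so since $\epsilon_0>0$ it suffices to prove that $R$ is monotone on $X$; then $S$ is a positive linear combination of monotone operators and therefore monotone.

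To handle $R$, I would test $R(u)-R(v)$ against $u-v$ in the $\langle\cdot,\cdot\rangle_{X',X}$ duality and integrate by parts, the boundary terms vanishing because $u,v\in X\subset (H^1_0(D))^d$. This yields
\begin{equation*}
\langle R(u)-R(v),u-v\rangle=\nu\int_D|\nabla(u-v)|^2\,dx+\beta\int_D\bigl(|A(u)|^2A(u)-|A(v)|^2A(v)\bigr):\nabla(u-v)\,dx.
\end{equation*}
The Laplacian contribution is manifestly non-negative. For the second term, I would exploit the symmetry of the matrix $|A(u)|^2A(u)$ to replace $\nabla(u-v)$ by its symmetric part $\tfrac12 A(u-v)=\tfrac12(A(u)-A(v))$, and then invoke the pointwise inequality $(|M|^2M-|N|^2N):(M-N)\ge 0$ valid for all symmetric matrices $M,N$, which follows from the convexity of $M\mapsto \tfrac14|M|^4$.

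There is no genuine obstacle here: once the decomposition $S=T+\epsilon_0 R$ is recorded, everything reduces to integration by parts, symmetry of $A(\cdot)$, and the convexity-based pointwise monotonicity of the $3$-Laplacian-type term. What is essential, however, is the strict positivity $\epsilon_0>0$, equivalent to the strict inequality $\alpha^2<2\nu\beta$ imposed in \eqref{condition1}: it is exactly this room that allowed one, in the construction of $T$, to split off the positive pieces $\nu\epsilon_0\,(-\Delta)$ and $\beta\epsilon_0\,(-\text{div}(|A|^2A))$ from $S$ so as to absorb the sign-indefinite term $-\alpha\,\text{div}(A^2)$ into a monotone operator, and it is the same room that is being reused here to add $\epsilon_0 R$ back.
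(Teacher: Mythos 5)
Your decomposition $S=T+\epsilon_0 R$ with $R(u)=-\nu\Delta u-\beta\,\mathrm{div}(|A(u)|^2A(u))$ is exactly the argument the paper intends (it states the corollary as an immediate consequence of Lemma~\ref{Lemma-monotone}), and your verification that $R$ is monotone via integration by parts, the symmetry of $A(\cdot)$, and the convexity of $M\mapsto\tfrac14|M|^4$ is correct. The only small caveat is your closing remark: strict positivity of $\epsilon_0$ is needed elsewhere for coercivity, but for monotonicity alone $\epsilon_0\ge 0$ would already suffice, since you are only adding a non-negative multiple of a monotone operator to $T$.
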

Let	us	precise	the	assumptions	on	the	initial	data	and	the	external	forces.
\begin{itemize}
	\item[$\mathcal{H}_1:$]
	we consider $y_0: \Omega \to H$,	$F:[0,T]\to	X^\prime$ such that
	\begin{align}
	&y_0\;\text{ is }\mathcal{F}_0\text{-measurable and}\; y_0\in L^q(\Omega,H),	\quad	q>2	\text{	and	} F\in	L^{\frac{4}{3}}(0,T;X^\prime).\label{data-assumptions}
		\end{align}

	\end{itemize}

\begin{remark}
We wish to draw the reader’s attention to the fact that	$L^2(0,T;V^\prime)	\hookrightarrow	L^{\frac{4}{3}}(0,T;X^\prime).$
\end{remark}
	\section{Main	results}\label{Sec-main-result}
First,	let	us	introduce	the	notion	of	strong	solution	to	\eqref{I}.	
\begin{defi}\label{strongsol-def} We say that \eqref{I} has a strong	(pathwise) solution, if and only if there exist  a predictable process $ y: \Omega\times[0,T] \to H$	such	that:
	
	\begin{enumerate}
		\item	P-a.s. $\omega\in	\Omega$:
		$
		y(\omega,\cdot) \in  C([0,T];X^\prime)\cap L^\infty(0,T;H)
		$	and	$y(0,\cdot)= y_0$,
		\item	$y\in 	L^4(\Omega\times(0,T);X)\cap		L^2_{w-*}(\Omega;L^\infty(0,T;H))$,	
		\item	$P$-a.s. in $\Omega$ for all $t\in[0,T]$, the following equality holds
			\begin{align}\label{eqn-strong-sol}
		(	y(t),\phi)&=(y_0,\phi)+\displaystyle\int_0^t\big\langle\nu \Delta y-(	y\cdot \nabla)	y+\alpha\text{div}[A(y)^2]+\beta \text{div}[|A(	y)|^2A(	y)],\phi\big\rangle ds\nonumber\\&\quad+\displaystyle \int_0^t\langle	F,\phi\rangle	ds+\displaystyle \int_0^t\big(G(\cdot,	y),\phi\big)
		d\mathcal{W}\quad  \text{ for all } \phi \in X.
		\end{align}	  .
		
	\end{enumerate} 
	\end{defi}

\begin{remark}\label{Prog-meas} The first point of Definition \ref{strongsol-def} could be given  by replacing  $C([0,T];(W^{-1,4/3}(D))^d)\cap L^\infty(0,T;H)$	by	$C([0,T];H)$. Indeed,	the	first	point of	Definition \ref{strongsol-def}	yields $y(\omega)\in C_w([0,T];H)$\footnote{$C_w([0,T];H)$	denotes	the	Bochner space of weakly continuous functions with values in $H.$}, then, since $y$ satisfies \eqref{eqn-strong-sol}	and	(2), we have back $P$-a.s.	$\omega\in	\Omega:$	$y(\omega)\in C([0,T];H)$ according to \cite[Thm 4.2.5]{Liu-Rock}.	Moreover,	
				it	is	worth	to	mention	 that	the	point	(2)	in	Definition \ref{strongsol-def},		in	particular	$y\in	L^4(\Omega\times(0,T);(W^{1,4}(D))^d)$	allows	to	apply	infinite	dimensional	It\^o	formula	for	$\Vert	y\Vert_H^2$	in	2D	and	3D	bounded	domain,	see	\textit{e.g.}	\cite[Thm.	4.2]{Pardoux}.
\end{remark}
For the convenience of the reader,	let	
 us state the main results of our work, the detailed
 	proof of which is presented in  Sections	\ref{Sec-martingale}	and	\ref{Section-invariant-measure}.	The			first	result	is	given	in	the	following	theorem.
\begin{theorem}\label{exis-thm-strong}
	Assume that \eqref{data-assumptions} holds. Then, there exists a (strong) solution  to \eqref{I} in the sense of Definition \ref{strongsol-def}.
\end{theorem}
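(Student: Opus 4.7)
\medskip

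\noindent\textbf{Proof plan for Theorem \ref{exis-thm-strong}.}

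The strategy is the classical two-step program for stochastic monotone equations with strongly nonlinear terms: first construct a \emph{martingale} solution by a stochastic compactness/monotonicity argument, then upgrade it to a probabilistically strong solution via a pathwise uniqueness estimate and the Yamada--Watanabe/Gy\"ongy--Krylov principle.

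\medskip

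\noindent\emph{Step 1: Approximation scheme and a priori bounds.}
The plan is to introduce a Galerkin approximation $y_n$ associated with a basis of $V$ (e.g.\ eigenfunctions of the Stokes operator with Dirichlet data), or, alternatively, a viscous/truncation regularization of the cubic term. For the approximate process, apply the finite-dimensional It\^o formula to $\|y_n\|_H^2$. Using $b(y_n,y_n,y_n)=0$ and the coercivity contained in the operator $S$ defined in \eqref{DEF_S}, together with \eqref{condition1} and Korn's inequality \eqref{Korn-ineq}, the deterministic dissipation controls
\[
\nu\epsilon_0\|\nabla y_n\|_2^2+\beta\epsilon_0\|A(y_n)\|_4^4.
\]
Combined with the Burkholder--Davis--Gundy inequality and the sublinear estimate on $G$ deduced from \eqref{noise1}, this yields uniform bounds in $L^q(\Omega;L^\infty(0,T;H))$ and in $L^4(\Omega\times(0,T);X)$; the duality pairing with the external force $F\in L^{4/3}(0,T;X')$ is absorbed by Young's inequality against the $X$-norm.

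\medskip

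\noindent\emph{Step 2: Tightness and Skorokhod representation.}
From Step~1 and the equation itself, I would estimate the time increments of $y_n$ in $W^{s,4/3}(0,T;X')$ for some $s\in(0,1/2)$: the drift contributes via its $L^{4/3}(0,T;X')$ bound (the nonlinearities $\operatorname{div}(A(y_n)^2)$, $\operatorname{div}(|A(y_n)|^2A(y_n))$ and $(y_n\cdot\nabla)y_n$ are all controlled by the $L^4(0,T;X)$ bound, with the convective term requiring an interpolation inequality in 3D), and the stochastic term is handled by a Kolmogorov-type estimate using \eqref{noise1}. The compact embedding $L^4(0,T;X)\cap W^{s,4/3}(0,T;X')\hookrightarrow L^2(0,T;H)$ (Aubin--Lions--Simon) together with Prokhorov gives tightness of the laws of $(y_n,\mathcal{W})$ on a suitable product space. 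Since $L^\infty(0,T;H)$ with the weak-$*$ topology is not Polish, I would apply the Jakubowski--Skorokhod representation to obtain, on a new probability space, processes $(\tilde y_n,\tilde{\mathcal{W}}_n)\to(\tilde y,\tilde{\mathcal{W}})$ a.s.\ in the chosen topology.

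\medskip

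\noindent\emph{Step 3: Passing to the limit via monotonicity.}
The convective term $(\tilde y_n\cdot\nabla)\tilde y_n$ passes to the limit thanks to strong convergence in $L^2(0,T;H)$. The martingale identification of the stochastic integral is standard. The main obstacle is the identification of the weak limits of the nonlinear dissipative terms, in particular the cubic term $\operatorname{div}(|A(\tilde y_n)|^2A(\tilde y_n))$: weak $L^4$ convergence of $A(\tilde y_n)$ is insufficient. Here one invokes Corollary~\ref{Monotone-S-ope}: writing
\[
\int_0^T\bigl\langle S(\tilde y_n)-S(\phi),\tilde y_n-\phi\bigr\rangle\,ds\ge 0
\]
for any $\phi\in L^4(0,T;X)$, one passes to the limit using the energy identity for the approximations (via It\^o) and the weak lower semicontinuity of $\|\cdot\|_H^2$; the limiting inequality, with $\phi=\tilde y-\lambda w$ and $\lambda\to 0^+$, identifies the weak limit of $S(\tilde y_n)$ with $S(\tilde y)$ in the Minty--Browder fashion. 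This yields a martingale solution satisfying \eqref{eqn-strong-sol}; Remark~\ref{Prog-meas} then gives continuity into $H$.

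\medskip

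\noindent\emph{Step 4: Pathwise uniqueness and strong existence.}
Given two solutions $y^1,y^2$ on the same stochastic basis with the same initial data and driving noise, I would apply the infinite-dimensional It\^o formula to $\|y^1-y^2\|_H^2$ (justified by Remark \ref{Prog-meas} since $y^i\in L^4(\Omega\times(0,T);X)$). The monotonicity of $S$ from Corollary~\ref{Monotone-S-ope} eliminates the dissipative contribution with a favourable sign; the convective difference $b(y^1,y^1,y^1-y^2)-b(y^2,y^2,y^1-y^2)$ is controlled by $\|y^1-y^2\|_H^2$ times a process in $L^1(0,T)$ (using the $L^4(0,T;X)$ bound and Sobolev embeddings); and the stochastic integral is Lipschitz-controlled by \eqref{noise1}. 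Stochastic Gronwall yields $y^1=y^2$. Combining pathwise uniqueness with the martingale solution of Step~3 and applying the Gy\"ongy--Krylov characterization gives a strong solution, completing the proof. The delicate point throughout is the cubic dissipation: every estimate must be calibrated so that the $L^4(0,T;X)$ control produced by it suffices simultaneously for time-regularity, for the convective term, and for the Minty limiting argument.
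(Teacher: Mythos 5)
Your proposal follows essentially the same route as the paper: Galerkin approximation with It\^o energy estimates yielding $L^2(\Omega;L^\infty(0,T;H))\cap L^4(\Omega\times(0,T);X)$ bounds, tightness plus the Jakubowski--Skorokhod representation, identification of the cubic dissipative term via the Minty--Browder argument based on Corollary~\ref{Monotone-S-ope}, and finally a weighted pathwise-uniqueness estimate combined with a Yamada--Watanabe-type theorem (the paper uses Ondrej\'at's version, and an Aldous-condition tightness criterion with a basis in $(H^3)^d\cap V$ and a cut-off for the finite-dimensional SDE, but these are technical variants of the same machinery). The plan is correct and matches the paper's proof.
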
	

\begin{proof}
		The	proof	is	devided	into	two	steps.	First,	we	construct		a	martingale	solution,	by	using	stochastic	compactness	tools	given	in	Theorem	\ref{exis-thm-mart}.	Then,		we	prove		 that		pathwise	uniqueness holds	\textit{i.e}.	$P[y_1(t)=y_2(t)]=1$	for	every	$0\leq	t\leq	T$,	where	$y_1$	and	$y_2$	are	 strong solution to \eqref{I}	with	the	same	data,	see	Corollary	\ref{cor-pathwise-uniqueness}. Consequently,	Theorem	\ref{exis-thm-strong}	follows	from	\cite[Theorem	2	\&	12.1]{Ondrejat}.
\end{proof}

The	second	main	result	of	our	work	concerns	the	existence	of		invariant	measures.
Let	$y(t;y_0),	t\geq	0$	be	the	unique	strong	solution	to	\eqref{I}.
For any 	$\varphi\in	\mathcal{B}_b(H)$\footnote{$\mathcal{B}_b(H)$	denotes	the	set	of	bounded Borel functions.},	we	define
\begin{align}\label{transition-prob-1}
(P_t\varphi)(y_0)=\E[\varphi(y(t;y_0))],	\quad	y_0\in	H,	\quad	t\geq	0.
\end{align}

\begin{theorem}\label{THm-invariant-measure-1}	Assume that  $y_0\in	H$,	$F\in	X^\prime$	and	the	coefficients		$(\sigma_{\k})_{\k\geq	1}$ 	are	independent	of	$t$,	and	satisfies	\eqref{noise1}.
	Then, there exists an invariant measure	$\mu\in\mathcal{P}(H)$\footnote{$\mathcal{P}(H)$	denotes		the	set	of	Borel	probability	measure	on	$H$.}	for	$(P_t)_t$	defined	by	\eqref{transition-prob-1},	that	is,	$P_t^*\mu=\mu$		where	$(P_t^*)_t$	denotes	 the adjoint semi-group
	acting on	$\mathcal{P}(H)$	given	by
	\begin{align}\label{semi-group-adjoint-1}
	P_t^*\mu(\Gamma)=\int_HP_t(x,\Gamma)\mu(dx)	\text{	with	}	P_t(y_0,\Gamma):=P(u(t,y_0)\in\Gamma)	\text{	for	any	}	\Gamma\in\mathcal{B}(H).
	\end{align}
\end{theorem}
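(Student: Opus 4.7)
The plan is to apply the Krylov--Bogoliubov theorem, which reduces the problem to checking two things: (i) the semigroup $(P_t)_t$ defined by \eqref{transition-prob-1} is Feller on $H$, and (ii) for some $y_0\in H$, the sequence of time-averaged laws
\[
\mu_T(\Gamma):=\frac{1}{T}\int_0^T P_t(y_0,\Gamma)\,dt, \qquad \Gamma\in\mathcal{B}(H),\ T>0,
\]
is tight in $\mathcal{P}(H)$.

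\textbf{Feller property.} For $\varphi\in C_b(H)$ and a sequence $y_0^n\to y_0$ in $H$, I would denote by $y^n,y$ the unique strong solutions of \eqref{I} starting from $y_0^n$ and $y_0$ respectively, both driven by the same cylindrical Wiener process. Applying the stability estimate obtained in the proof of pathwise uniqueness (the key ingredient in Corollary \ref{cor-pathwise-uniqueness}, via an It\^o computation on $\|y^n(t)-y(t)\|_H^2$ and the monotonicity of $S$ in Corollary \ref{Monotone-S-ope} together with the Lipschitz property \eqref{noise1} of $G$), I expect an estimate of the type
\[
\mathbb{E}\bigl[\|y^n(t)-y(t)\|_H^2\bigr]\;\leq\; C(t)\,\|y_0^n-y_0\|_H^2,
\]
so that $y^n(t)\to y(t)$ in $H$ in probability. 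Bounded convergence then yields $P_t\varphi(y_0^n)\to P_t\varphi(y_0)$, establishing $P_t:C_b(H)\to C_b(H)$.

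\textbf{Tightness of $\{\mu_T\}_{T\geq 1}$.} I would apply the infinite-dimensional It\^o formula (justified as in Remark \ref{Prog-meas}) to $\|y(t)\|_H^2$. Using $\langle S(y),y\rangle\geq \nu(1-\epsilon_0)\|\nabla y\|_2^2+\beta(1-\epsilon_0)\|A(y)\|_4^4$ (a consequence of the monotonicity argument behind Lemma \ref{Lemma-monotone} tested against $0$), the cancellation $b(y,y,y)=0$, Korn's inequality \eqref{Korn-ineq}, Young and Poincar\'e inequalities to absorb $\langle F,y\rangle$, and the bound $\sum_\k\|\sigma_\k(y)\|_2^2\leq L\|y\|_2^2$ from \eqref{noise1}, I expect
\[
d\|y(t)\|_H^2 + c_1\bigl(\|y(t)\|_H^2+\|y(t)\|_X^4\bigr)\,dt \;\leq\; c_2\,dt + 2\bigl(y(t),G(y(t))\,d\mathcal{W}\bigr),
\]
for some constants $c_1,c_2>0$ depending on $\nu,\beta,\alpha,F,L$ (the Poincar\'e constant is used to convert the $\|\nabla y\|_2^2$ dissipation into $\|y\|_H^2$). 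Taking expectation, Gronwall gives $\sup_{t\geq 0}\mathbb{E}\|y(t)\|_H^2\leq C(y_0)$, and integrating in time yields
\[
\frac{1}{T}\int_0^T\mathbb{E}\|y(s)\|_X^4\,ds \;\leq\; C',\qquad T\geq 1,
\]
with $C'$ independent of $T$. By Chebyshev's inequality,
\[
\mu_T\bigl(\{u\in H:\|u\|_X\leq R\}\bigr)\;\geq\; 1 - \frac{C'}{R^4},
\]
uniformly in $T\geq 1$. Since $X\hookrightarrow H$ is compact (the embedding $(W^{1,4}(D))^d\hookrightarrow (L^2(D))^d$ is compact by \eqref{Sobolev-embedding}), the closed ball $\{\|u\|_X\leq R\}$ is relatively compact in $H$, giving tightness of $\{\mu_T\}_{T\geq 1}$ in $\mathcal{P}(H)$.

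\textbf{Conclusion.} By Prokhorov, some subsequence $\mu_{T_n}$ converges weakly in $\mathcal{P}(H)$ to a limit $\mu$. The Feller property allows one to pass to the limit in the identity $\mu_T P_s = \mu_T + \frac{1}{T}\int_T^{T+s}P_t(y_0,\cdot)dt - \frac{1}{T}\int_0^s P_t(y_0,\cdot)dt$, whose remainder vanishes as $T\to\infty$, yielding $P_s^*\mu=\mu$ for every $s\geq 0$, which is the desired invariance. The main technical obstacle is the rigorous justification of the It\^o formula on $\|y\|_H^2$ and, within that, cleanly absorbing the sign-indefinite term $\alpha\,\text{div}(A^2)$ in the dissipation, which is precisely where the restriction $|\alpha|\leq\sqrt{2\nu\beta}$ from \eqref{condition1} (equivalently $\epsilon_0\in(0,1)$) is used.
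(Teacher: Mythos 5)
Your overall strategy --- Feller property plus tightness of the time-averaged laws, then Krylov--Bogoliubov --- is exactly the route the paper takes (Propositions \ref{prop-inva-1} and \ref{prop-invar-2} feeding into Theorem \ref{THm-invariant-measure}), and your tightness argument is essentially the paper's: the same It\^o computation on $\|y\|_2^2$, the same use of $\epsilon_0\in(0,1)$ to absorb the sign-indefinite $\alpha$-term, and the same absorption of the It\^o correction $L\|y\|_2^2$ into the quartic dissipation via Young's inequality. The only cosmetic difference is that you use the averaged bound on $\frac1T\int_0^T\E\|y(s)\|_X^4\,ds$ together with the compactness of $X\hookrightarrow H$, while the paper uses the averaged $V$-norm bound and the compact embedding $V\hookrightarrow H$; both come out of the same energy inequality \eqref{concentration-eqn} and both work.

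The one step that would not go through as written is the unweighted Feller estimate
\begin{equation*}
\E\bigl[\|y^n(t)-y(t)\|_H^2\bigr]\le C(t)\,\|y_0^n-y_0\|_H^2 .
\end{equation*}
The It\^o computation on the difference produces, besides the monotone and Lipschitz contributions, the convection term $b(y^n-y,\,y,\,y^n-y)$, which is only controlled by $\frac{\nu\epsilon_0}{2}\|\nabla(y^n-y)\|_2^2+C\|\nabla y\|_3^2\,\|y^n-y\|_2^2$; since $\|\nabla y(\cdot)\|_3^2$ is a random, merely time-integrable quantity (not bounded by a deterministic constant), you cannot close a Gronwall argument after taking expectations. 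This is why the paper's Lemma \ref{Lemma-V-stability} only yields the weighted estimate $\E\, g(t)\|y^n(t)-y(t)\|_2^2\le 2\|y_0^n-y_0\|_2^2e^{2C(L)t}$ with the random weight $g(t)=e^{-\frac{C_1^2}{\nu\epsilon_0}\int_0^t\|\nabla y(r)\|_{L^3}^2dr}$. The weighted version still suffices for the Feller property: since $g>0$ a.s.\ and does not depend on $n$, one extracts an a.s.\ convergent subsequence of $\|y^n(t)-y(t)\|_2^2$, applies dominated convergence to $\varphi(y^n(t))$, and recovers the full sequence by uniqueness of the limit --- so your conclusion survives, but the intermediate unweighted estimate should be replaced by the weighted one.
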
	
\begin{theorem}\label{Theorem-ergodic-1}
	Under	the	assumption	of	Theorem	\ref{THm-invariant-measure-1},	there exists an ergodic invariant measure $\mu$	for the transition semigroup	$(P_t)_t$,
	and	concentrated	on	$X$	satisfying	
	$	\displaystyle\int_H\Vert	x\Vert_X^4\mu(dx)	<	\infty.$
\end{theorem}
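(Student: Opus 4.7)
The plan is to derive a uniform-in-time $X$-moment estimate for the solution, transfer it to any invariant measure via invariance, then exploit the compact embedding $X \hookrightarrow H$ together with the Krein--Milman theorem to extract an ergodic invariant measure.

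\textbf{Step 1: uniform-in-time moment estimate.} Starting from $y_0 \in H$, apply the infinite-dimensional It\^o formula (justified as in Remark~\ref{Prog-meas}) to $\Vert y(t)\Vert_H^2$. Using the antisymmetry $b(y,y,y)=0$, the splitting $S=T+\nu\epsilon_0(-\Delta)+\beta\epsilon_0(-\text{div}(|A|^2A))$ via Lemma~\ref{Lemma-monotone}, and Korn/Poincar\'e inequalities, one obtains an identity of the form
\begin{align*}
\Vert y(t)\Vert_H^2 + 2\int_0^t\bigl[\nu\epsilon_0\Vert\nabla y\Vert_2^2 + \beta\epsilon_0\Vert A(y)\Vert_4^4\bigr]ds \leq \Vert y_0\Vert_H^2 + 2\int_0^t\langle F,y\rangle\,ds + \text{martingale} + \tfrac{1}{2}L\int_0^t\Vert y\Vert_H^2\,ds.
\end{align*}
Absorbing $F$ by Young's inequality and using the spectral gap $\Vert y\Vert_H^2 \lesssim \Vert\nabla y\Vert_2^2$, taking expectation yields a differential inequality of the form $\tfrac{d}{dt}\mathbb{E}\Vert y(t)\Vert_H^2 + c\,\mathbb{E}\Vert y(t)\Vert_H^2 \leq C(F,L)$, provided $L$ is compatible with $\nu\epsilon_0$ (this being implicit in the Feller/well-posedness analysis that precedes). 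Gronwall then gives
$$\sup_{t\geq 0}\mathbb{E}\Vert y(t)\Vert_H^2 + \sup_{t\geq 0}\mathbb{E}\int_t^{t+1}\Vert y(s)\Vert_X^4\,ds \leq C_1\bigl(1+\Vert y_0\Vert_H^2\bigr),$$
where $C_1$ depends only on $\nu,\alpha,\beta,L,\Vert F\Vert_{X'}$.

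\textbf{Step 2: moment bound for invariant measures.} Let $\mu$ be any invariant measure (existing by Theorem~\ref{THm-invariant-measure-1}). Integrating the previous inequality against $\mu(dy_0)$ and invoking stationarity $\mathcal{L}(y(s;\cdot)_{\#}\mu)=\mu$ gives
$$\int_H \Vert x\Vert_X^4\,\mu(dx) = \mathbb{E}_\mu\int_0^1\Vert y(s)\Vert_X^4\,ds \leq C_1\Bigl(1+\int_H\Vert x\Vert_H^2\mu(dx)\Bigr),$$
and iterating (or absorbing the $H$-norm with $c>0$ above) one concludes that \emph{every} invariant measure satisfies $\int_H\Vert x\Vert_X^4\,\mu(dx)\leq C_2<\infty$ with $C_2$ independent of $\mu$.

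\textbf{Step 3: compactness of the set of invariant measures.} By \eqref{Sobolev-embedding}, $W^{1,4}(D)\hookrightarrow L^2(D)$ is compact, hence $X\hookrightarrow H$ compactly. Balls $B_R=\{x\in X:\Vert x\Vert_X\leq R\}$ are therefore $H$-compact, and Chebyshev gives $\mu(H\setminus B_R)\leq C_2/R^4$ uniformly over $\mu\in\mathcal{M}$, where $\mathcal{M}$ denotes the set of invariant probability measures for $(P_t)$. Thus $\mathcal{M}$ is tight, hence relatively weakly compact in $\mathcal{P}(H)$. Since $P_t$ is Feller (established in the preceding sections), $\mathcal{M}$ is weakly closed, therefore weakly compact; it is also convex and nonempty.

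\textbf{Step 4: ergodicity via Krein--Milman.} By the Krein--Milman theorem, $\mathcal{M}$ admits an extreme point $\mu_{\mathrm{erg}}$. A classical result (see e.g.\ Da Prato--Zabczyk) identifies extreme points of the convex set of invariant measures with the ergodic ones. The measure $\mu_{\mathrm{erg}}$ inherits the bound $\int_H\Vert x\Vert_X^4\,\mu_{\mathrm{erg}}(dx)\leq C_2$, yielding the theorem.

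\textbf{Main obstacle.} The delicate point is Step~1: deriving a uniform-in-time estimate requires that the coercivity extracted from the monotone part $T$ (which provides $\nu\epsilon_0\Vert\nabla y\Vert_2^2+\beta\epsilon_0\Vert A(y)\Vert_4^4$) strictly dominates the contribution of the noise $L\Vert y\Vert_H^2$ and the forcing. Carefully quantifying $\epsilon_0=1-\sqrt{\alpha^2/(2\nu\beta)}$ and checking that the combination $\nu\epsilon_0 C_P^{-2} - L/2 > 0$ is compatible with the standing assumptions is the technical heart of the argument; everything downstream (transfer to $\mu$, compactness, extreme-point extraction) is then standard.
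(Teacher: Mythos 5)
Your overall architecture (moment estimate $\to$ transfer to invariant measures $\to$ tightness via the compact embedding $X\hookrightarrow H$ $\to$ Krein--Milman and extreme points $=$ ergodic measures) is exactly the paper's, and Steps 3--4 are fine. But Step 1 contains a genuine gap that you yourself flag without resolving: you balance the noise term $L\int_0^t\Vert y\Vert_2^2\,ds$ against the $H^1$-dissipation $\nu\epsilon_0\Vert\nabla y\Vert_2^2$ and end up needing a smallness condition of the type $\nu\epsilon_0 C_P^{-2}-L/2>0$, which is \emph{not} among the hypotheses of Theorem \ref{THm-invariant-measure-1} (only \eqref{noise1-2} with an arbitrary $L>0$ is assumed). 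The paper needs no such condition: in Proposition \ref{prop-invar-2} the quadratic noise growth is absorbed into the \emph{quartic} dissipation, via $L\Vert y\Vert_2^2\leq LC_P\frac{|D|}{2\theta}+LC_P C_K^4\frac{\theta}{2^5}\Vert A(y)\Vert_4^4$ and the choice $\theta=\frac{2^3\beta\epsilon_0}{LC_PC_K^4}$, so that the term is dominated by $\frac{\beta\epsilon_0}{2}\int_D|A(y)|^4dx$ for every $L$, at the cost only of a constant proportional to $|D|$. Your "main obstacle" therefore disappears once you use the $\Vert A(y)\Vert_4^4$ coercivity rather than the $\Vert\nabla y\Vert_2^2$ one; as written, your argument only works under an extra restriction on $L$.

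The second gap is in Step 2: you integrate the estimate for $\mathbb{E}\Vert y(t;y_0)\Vert_H^2$ against $\mu(dy_0)$ and "invoke stationarity," but the functions $\Vert\cdot\Vert_H^2$ and $\Vert\cdot\Vert_X^4$ are unbounded on $H$, so the invariance identity $\int_H P_t\varphi\,d\mu=\int_H\varphi\,d\mu$ is not available for them a priori, and the cancellation of the terms $\int_H\mathbb{E}\Vert y(t)\Vert_H^2d\mu$ and $\int_H\Vert y_0\Vert_H^2d\mu$ is an $\infty-\infty$ statement until one knows $\int_H\Vert x\Vert_H^2\mu(dx)<\infty$; "iterating" does not fix this. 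The paper's Proposition \ref{prop-ergodic} handles this by first applying It\^o's formula to the bounded concave truncation $f_\epsilon(x)=x/(1+\epsilon x)$ of $\Vert\cdot\Vert_2^2$ (whose second derivative has a favourable sign), using invariance for the bounded functions $F_\epsilon=f_\epsilon\circ\Vert\cdot\Vert_2^2$ and $G_\epsilon$, and letting $\epsilon\to0$ by monotone convergence; the $X$-moment is then obtained with the bounded truncations $F_n$ of $\Vert\cdot\Vert_X^4$ in \eqref{sequ-appro-invari}. You need some such truncation/monotone-convergence device to make Step 2 rigorous.
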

\begin{proof}	For	the	proof	of	Theorem	\ref{THm-invariant-measure-1},	see	Theorem	\ref{THm-invariant-measure}.	Theorem	\ref{Theorem-ergodic-1}	is	a	consequence	of	Proposition	\ref{prperty-invariant}	and	Theorem	\ref{Theorem-ergodic}.
\end{proof}
\section{Martingale	solution	$\&$	the	uniqueness}\label{Sec-martingale}

In the first stage, we construct a  martingale solution to \eqref{cut-off}, according to the following definition.
\begin{defi}\label{solmartingale} We say that \eqref{I} has a martingale solution, if and only if there exist a probability space $(\overline{\Omega}, \overline{\mathcal{F}},\overline{P}),$ a filtration 
	$(\overline{\mathcal{F}}_t)$, a cylindrical Wiener process 
	$\overline{\mathcal{W}} $,  $\overline{y_0} \in  L^2(\overline{\Omega},H)$ adapted  to $\overline{\mathcal{F}}_0$ and a predictable process $\overline y: \overline\Omega\times[0,T] \to H$ with a.e. paths
	\begin{align*}
	\overline y(\omega,\cdot) \in C([0,T];X^\prime)\cap L^\infty(0,T;H),
	\end{align*}
	such that 
	\begin{enumerate}
	\item		$\bar y\in 	L^4(\overline\Omega\times(0,T);X)\cap	L^2_{w-*}(\overline\Omega;L^\infty(0,T;H))$.
  \item	 $\overline	P$-a.s. in $\overline{\Omega}$ for all $t\in[0,T]$, the following equality holds
	\begin{align*}
	(\overline	y(t),\phi)&=(\overline{y}(0),\phi)+\displaystyle\int_0^t\big\langle\nu \Delta \overline	y-(\overline	y\cdot \nabla)\overline	y+\alpha\text{div}[A(\overline	y)^2]+\beta \text{div}[|A(\overline	y)|^2A(\overline	y)],\phi\big\rangle ds+\nonumber\\&+\int_0^t\langle	F,\phi\rangle	ds+\displaystyle \int_0^t\big(G(\cdot,\overline	y),\phi\big)
	d\overline{\mathcal{W}}\quad  \text{ for all } \phi \in X,	\;	\text{and
}	\mathcal{L}(\overline{ y}(0))=\mathcal{L}(y_0).
	\end{align*}	  
		\end{enumerate}
\end{defi}

Now, we are able to present the following result.
\begin{theorem}\label{exis-thm-mart}
	Assume that \eqref{data-assumptions} holds. Then, there exists a (martingale) solution  to \eqref{I} in the sense of Definition \ref{solmartingale}.
\end{theorem}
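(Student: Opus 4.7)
The plan is to combine a Galerkin/cut-off approximation with stochastic compactness (Jakubowski--Skorokhod) and to handle the strongly nonlinear part via the monotonicity property of the operator $S$ from \eqref{DEF_S} (Corollary \ref{Monotone-S-ope}). Let $\{e_j\}_{j\ge 1}\subset V$ be a smooth orthogonal basis of $H$ (e.g.\ eigenvectors of the Stokes operator), and let $\Pi_n$ denote the $H$-orthogonal projection onto $H_n:=\mathrm{span}(e_1,\dots,e_n)$. I would truncate the convective term with a smooth cut-off $\theta_R(\|y\|_H)$ (and possibly regularise the nonlocal/singular parts), thereby obtaining a finite-dimensional SDE for $y^{n,R}$ whose drift is locally Lipschitz with linear growth; standard finite-dimensional SDE theory yields a unique global $(\mathcal{F}_t)$-adapted solution. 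The construction of this approximate system is what \eqref{cut-off} is referring to.

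The a priori estimates are obtained by applying It\^o's formula to $\|y^{n,R}(t)\|_H^2$. The key cancellation is $(y^{n,R}\cdot\nabla y^{n,R},y^{n,R})=0$ together with $(\alpha\operatorname{div}(A^2)+\beta\operatorname{div}(|A|^2 A),y^{n,R})$ yielding, after integration by parts and using $|\alpha|\le\sqrt{2\nu\beta}$, a coercive lower bound of the form $c_0(\nu\|\nabla y^{n,R}\|_2^2+\beta\|A(y^{n,R})\|_4^4)$ (this is exactly the computation behind Lemma \ref{Lemma-monotone}). Combining with the Burkholder--Davis--Gundy inequality, Korn's inequality \eqref{Korn-ineq} and the linear growth bound $\sum_\k\|\sigma_\k(t,y)\|_2^2\le L\|y\|_H^2$ coming from \eqref{noise1}, I obtain $R$- and $n$-uniform bounds
\begin{equation*}
\mathbb{E}\sup_{t\in[0,T]}\|y^{n,R}(t)\|_H^{q}+\mathbb{E}\int_0^T\|y^{n,R}\|_X^4\,ds\le C,
\end{equation*}
for the $q$ from \eqref{data-assumptions}. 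A standard stopping time argument (using the bound on $\sup_t\|y^{n,R}\|_H$ to push $R\to\infty$ before $n\to\infty$) removes the cut-off. In parallel, testing against $\phi\in X$ and using the boundedness of the deterministic drift in $L^{4/3}(0,T;X')$ together with a BDG-type estimate on the stochastic integral yields a uniform bound on $y^n$ in $W^{\sigma,4/3}(0,T;X')$ for some $\sigma\in(0,1/2)$.

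With these estimates, tightness of the laws $\mathcal{L}(y^n)$ follows from a compact embedding argument: the bound in $L^4(0,T;X)\cap W^{\sigma,4/3}(0,T;X')$ combined with Aubin--Lions--Simon gives compactness in $L^4(0,T;H)$, while the fractional time bound gives tightness in $C([0,T];X')$. To retain the $L^\infty(0,T;H)$ weak-$*$ information, which lives in a non-metrisable space, I would apply the Jakubowski--Skorokhod representation theorem on the product space $\mathcal{X}:=C([0,T];X')\cap L^4(0,T;H)\cap L^\infty_{w*}(0,T;H)_{\mathrm{ball}}\times C([0,T];H_0)$, producing a new probability space $(\overline\Omega,\overline{\mathcal{F}},\overline P)$, versions $(\overline y^n,\overline{\mathcal{W}}^n)$ with the same laws, and a limit $(\overline y,\overline{\mathcal{W}})$ such that the convergences hold $\overline P$-a.s.\ in the respective topologies. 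The filtration $(\overline{\mathcal{F}}_t)$ is the usual augmentation of the natural one; adaptedness passes to the limit by a standard argument, and $\overline{\mathcal{W}}$ is a cylindrical Wiener process in $\mathbb{H}$.

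The main obstacle, as usual in this setting, is the passage to the limit in the strongly nonlinear third-grade term $S(\overline y^n)$, for which only the weak convergence $S(\overline y^n)\rightharpoonup\overline S$ in $L^{4/3}(0,T;X')$ is available a priori. Here I would use the Minty--Browder trick enabled by Corollary \ref{Monotone-S-ope}: writing the It\^o energy identity for $\|\overline y^n(t)\|_H^2$, taking $\liminf$ (using lower semicontinuity of the $H$-norm and of the quadratic variation), comparing with the energy identity satisfied by the limit $\overline y$ (which we first derive in an abstract form using $\overline S$), and then testing monotonicity against an arbitrary $v\in L^4(0,T;X)$ of the form $v=\overline y-\lambda w$ and sending $\lambda\to 0^+$, we conclude $\overline S=S(\overline y)$ $dt\otimes d\overline P$-a.e. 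The convective term $(\overline y\cdot\nabla)\overline y$ passes by the strong convergence in $L^4(0,T;H)$ combined with the weak convergence in $L^4(0,T;X)$, using the bilinearity of $b$ and the embedding $X\hookrightarrow L^p$ for every $p<\infty$ when $d\le 3$. Finally, the stochastic integral $\int_0^\cdot G(\cdot,\overline y^n)\,d\overline{\mathcal{W}}^n$ converges to $\int_0^\cdot G(\cdot,\overline y)\,d\overline{\mathcal{W}}$ by the classical lemma identifying limits of stochastic integrals from a.s.\ convergence of integrands and driving noises (cf.\ \cite{Daprato}), using \eqref{noise1} to control the Hilbert--Schmidt norm. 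Collecting these convergences, the limit $\overline y$ satisfies items (1)--(2) of Definition \ref{solmartingale}, concluding the proof.
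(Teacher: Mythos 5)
Your proposal follows essentially the same route as the paper: Galerkin approximation with a cut-off, It\^o energy estimates exploiting the coercivity coming from $|\alpha|\le\sqrt{2\nu\beta}$ (the $\epsilon_0$-splitting), fractional-Sobolev-in-time bounds, tightness and Jakubowski--Skorokhod representation, identification of the limit of the stochastic integrals, and Minty's monotonicity trick via Corollary \ref{Monotone-S-ope} to identify the weak limit of $S(\overline y^{\,n})$. The only deviations are technical rather than substantive: the paper cuts off \emph{all} the nonlinear terms (and the force) using the $U'$-norm so that the finite-dimensional drift is genuinely globally Lipschitz --- cutting off only the convective term, as you suggest, leaves the cubic term with superlinear growth and would require a separate non-explosion argument --- and it builds the basis from eigenvectors of the compact embedding $U=(H^3(D))^d\cap V\hookrightarrow H$ and proves tightness via an Aldous condition and the Brze\'zniak--Motyl criterion in $C([0,T];U')\cap C([0,T];H_{weak})\cap L^2(0,T;H)$ rather than Aubin--Lions--Simon.
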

The	proof	of	Theorem	\ref{exis-thm-mart}	results	from	the	combination	of	the	following	parts.	
\subsection{Faedo–Galerkin approximation}
		Denote	by	$U:=(H^3(D))^d\cap	V$,	since	$U\underset{compact}{\hookrightarrow}	H$		we		construct	an	orthonormal	basis	in	$H$	by	using	the	eigenvectors	of	the	compact	embeeding	operator.	More	precisely,	
	there exists an orthonormal basis
	$\{e_i\}_{i\in	\mathbb{N}}$	of	$H$	such	that	$e_i\in	U$	and	
		 satisfies
	\begin{align}\label{basis1.2}
	(v,e_i)_{U}=\lambda_i(v,e_i), \quad \forall v \in U, \quad i \in \mathbb{N},
	\end{align}
	where the sequence $\{\lambda_i\}_{i\in\mathbb{N}}$ of the corresponding eigenvalues fulfils the  properties: $\lambda_i >0, \forall i\in \mathbb{N},$ and $\lambda_i \to \infty$ as $i  \to	\infty$. Note that $\{\widetilde{e}_i=\dfrac{1}{\sqrt{\lambda_i}}e_i\}$ is an orthonormal basis for $U$.	Now,	
		denote	by	$H_n=\text{span}\{e_1,\cdots,e_n\}$	and	the		operator	$P_n$	from	$U^\prime$	to	$H_n$	defined	by
	$$P_n:U^\prime\to	H_n;\quad	u\mapsto	P_nu=\displaystyle\sum_{i=1}^n\langle	u,e_i\rangle_{U^\prime,U}	e_i.	$$
	In	particular,	the	restriction	of	$P_n$	to	$H$,	denoted	by	the	same	way,	is	the	$(\cdot,\cdot)$-orthogonal	projection	from	$H$	to	$H_n$	and	given	by
	$$P_n:H\to	H_n;\quad	u\mapsto	P_nu=\displaystyle\sum_{i=1}^n 
	(	u,e_i)	e_i.	$$
	
	We 	notice	that	$\Vert	P_nu\Vert_2\leq\Vert	u\Vert_2$, $\forall u\in H$,
	then 	$\Vert	P_n\Vert_{L(H,H)}\leq	1$.
	Hence,	Lebesgue	convergence	theorem	ensures	
	$P_nu\to_n	u	$	in	$L^2(\Omega;H)$.	
	\begin{remark}\label{rmq-proj-U}
It	is	worth	to	mention	that	the	restriction	of	$P_n$	to	$U$	is	also	an	orthogonal	projection,	thanks	to	\eqref{basis1.2}	and	thus	$\Vert	P_n\Vert_{L(U,U)}\leq	1$.
	\end{remark}
	Let us consider 
	$y_{n,0}=\displaystyle\sum_{i=1}^n(y_0,e_i)e_i $
	and	set	$y_n(t)=\displaystyle\sum_{ i=1}^nc_i(t)e_i,$	$t\in[0,T]$.
	Let $M>0$ and consider a  family of  smooth  functions  $\kappa_M:[0,\infty[ \to [0,1]$  satisfying
	\begin{align}\label{cut-function}
	\kappa_M(x)=\begin{cases}
	1, \quad 0\leq x\leq M,\\[0.15cm]
	0, \quad  2M\leq x.	\end{cases}
	\end{align}
 Let us denote by $\theta_M$ the functions defined  on 
	$U^\prime$ as following 
	$$\theta_M(u)=\kappa_M (\Vert u \Vert_{U^\prime}), \quad \forall u \in  U^\prime.
	$$
	Consider	the	following	equations	\begin{align}\label{approx}
	\begin{cases}
	(y_n(t),v)&=(P_ny_0,v)+\displaystyle\int_0^t(P_nF,v)ds+\nu \displaystyle\int_0^t(P_n\Delta y_n,v)ds-\int_0^t\big(P_n	[y_n\cdot \nabla	y_n],v\big)ds\\
	&\hspace*{0.5cm}+\alpha\displaystyle\int_0^t(P_n\text{div}(A(y_n)^2),v)ds 
	+\beta\displaystyle\int_0^t(P_n\text{div}(|A(y_n)|^2A(y_n)),v)ds\\[0.15cm]&+\displaystyle\int_0^t (P_nG(\cdot,y_n),v)d\mathcal{W}\text{	for	all } v\in	H_n,	t\in[0,T]	\text{ and P-a.s.	in	}\Omega.
	\end{cases}
	\end{align}
	\begin{theorem}\label{Thm-app}
		For	each	$n\in	\mathbb{N}$,	there	exists	a	unique	predictable	process	$y_n\in	L^2(\Omega;C([0,T];H_n))$	solution	to	\eqref{approx}	satisfying	
		\begin{align}\label{estimate1}
		\E \sup_{s\in [0,T]} \Vert y_n(s)\Vert_2^2+2\nu\epsilon_0\E \int_0^{T}\Vert \nabla y_n\Vert_{2}^2dt&+\beta\epsilon_0\E\int_0^{T}\int_D|A(y_n)|^4dxdt \notag\\&\leq e^{cT} (\E\Vert	y_0\Vert_2^2+\dfrac{C}{(\beta\epsilon_0)^{\frac{1}{3}}}\int_0^T\Vert	F\Vert_{X^\prime}^{\frac{4}{3}}dt).
		\end{align}
	\end{theorem}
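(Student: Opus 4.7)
The plan is to recast \eqref{approx} as a finite-dimensional SDE, derive an Itô-based a priori estimate matching \eqref{estimate1}, and globalize. Testing \eqref{approx} against each basis vector $e_j$ turns the system into $n$ coupled scalar SDEs for the coordinates $c_i(t):=(y_n(t),e_i)$. Since $H_n$ is finite-dimensional, all norms on it are equivalent; the drift is therefore a polynomial in $(c_1,\ldots,c_n)$ (viscous linear, convective quadratic, $\alpha$- and $\beta$-divergences cubic), while the diffusion $P_nG(\cdot,y_n)$ is globally Lipschitz in $c$ thanks to \eqref{noise1} and $\|P_n\|_{L(H,H)}\leq 1$. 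Standard finite-dimensional SDE theory (Itô, or Ikeda--Watanabe) then produces a unique maximal adapted strong solution $(y_n,\tau_n)$ with continuous paths in $H_n$.

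\textbf{Energy identity and the monotonicity trick.} Introduce the localizing time $\tau_n^R:=\inf\{s\geq 0:\|y_n(s)\|_2\geq R\}\wedge\tau_n\wedge T$ and apply Itô's formula to $\|y_n\|_2^2$. Using $(P_n w,y_n)=(w,y_n)$ for $y_n\in H_n$, the antisymmetry $b(y_n,y_n,y_n)=0$, and the symmetry of $A(y_n)$ (so that $A^2:\nabla y_n=\tfrac12 A^2:A$ and $|A|^2A:\nabla y_n=\tfrac12|A|^4$), one obtains
\begin{equation*}
d\|y_n\|_2^2+2\nu\|\nabla y_n\|_2^2\,dt+\alpha\!\int_D A(y_n)^2:A(y_n)\,dx\,dt+\beta\|A(y_n)\|_4^4\,dt=2\langle F,y_n\rangle\,dt+\sum_{k\geq 1}\|P_n\sigma_k(\cdot,y_n)\|_2^2\,dt+2(G(\cdot,y_n),y_n)\,d\mathcal{W}.
\end{equation*}
The main obstacle is the sign-indefinite cubic $\alpha\!\int A^2:A\,dx$, which cannot be controlled by viscosity and quartic dissipation separately; it is exactly what Lemma \ref{Lemma-monotone}, invoked at $v=0$, packages. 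Since $\epsilon_0\in\,]0,1[$, it yields
\begin{equation*}
2\nu\|\nabla y_n\|_2^2+\alpha\!\int_D A(y_n)^2:A(y_n)\,dx+\beta\|A(y_n)\|_4^4\geq 2\nu\epsilon_0\|\nabla y_n\|_2^2+\beta\epsilon_0\|A(y_n)\|_4^4.
\end{equation*}

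\textbf{Closure and globalization.} Because $F\in X^\prime$, I combine Korn's inequality \eqref{Korn-ineq} with Young at exponents $(4/3,4)$ to obtain $2|\langle F,y_n\rangle|\leq\tfrac{\beta\epsilon_0}{2}\|A(y_n)\|_4^4+C(\beta\epsilon_0)^{-1/3}\|F\|_{X^\prime}^{4/3}$, while the Itô correction is dominated by $L\|y_n\|_2^2$ via \eqref{noise1}. Taking suprema over $[0,t\wedge\tau_n^R]$ and expectations, Burkholder--Davis--Gundy controls the stochastic integral by $\tfrac12\E\sup_s\|y_n\|_2^2+c\,\E\!\int\|y_n\|_2^2\,ds$, the first summand being absorbed on the left. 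Gronwall's lemma then delivers \eqref{estimate1} with a constant independent of $R$. Letting $R\to\infty$ forces $P(\tau_n^R<T)\to 0$, hence $\tau_n=T$ P-a.s., and we obtain the unique global strong solution $y_n\in L^2(\Omega;C([0,T];H_n))$ satisfying \eqref{estimate1}; uniqueness already comes for free from the finite-dimensional SDE framework of the first step.
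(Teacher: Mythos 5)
Your proof is correct and follows the same energy-estimate backbone as the paper: It\^o's formula for $\Vert y_n\Vert_2^2$, absorption of the sign-indefinite cubic $\alpha$-term into the viscous and quartic dissipation using $\epsilon_0=1-\sqrt{\alpha^2/(2\nu\beta)}$, the Korn--Young treatment of $\langle F,y_n\rangle$ at exponents $(4/3,4)$, BDG for the martingale term, and Gronwall. The two places where you genuinely diverge are both acceptable variants. First, for local existence the paper does not invoke a maximal-solution theorem for locally Lipschitz SDEs; it truncates the nonlinearities with a cut-off $\theta_M(y_n^M)=\kappa_M(\Vert y_n^M\Vert_{U'})$ so that the finite-dimensional system becomes \emph{globally} Lipschitz, solves it by Banach fixed point, and then removes the cut-off via the stopping times $\tau_M^n$ and the Chebyshev bound $M^2P(\tau_M^n<T)\leq\mathbf{C}$; your route (polynomial, hence locally Lipschitz, drift plus globally Lipschitz diffusion, maximal solution up to explosion, non-explosion from the uniform-in-$R$ bound) reaches the same endpoint and arguably with less bookkeeping, at the cost of citing a slightly stronger off-the-shelf SDE theorem. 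Second, you obtain the coercivity inequality by evaluating the monotonicity of $T$ (Lemma \ref{Lemma-monotone}) at $v=0$, whereas the paper re-derives the same inequality directly via Young's inequality applied to $2\vert\alpha\vert\,\Vert A(y_n)\Vert_4^2\Vert\nabla y_n\Vert_2$; these are the same computation in different packaging, and your check that $\langle T(u),u\rangle\geq 0$ unfolds (after the factor-of-two from $(A^2,\nabla u)=\tfrac12\int A^2\!:\!A$ and $(|A|^2A,\nabla u)=\tfrac12\int|A|^4$) into exactly the bound $2\nu\Vert\nabla u\Vert_2^2+\alpha\int A^2\!:\!A+\beta\int|A|^4\geq 2\nu\epsilon_0\Vert\nabla u\Vert_2^2+\beta\epsilon_0\int|A|^4$ is accurate. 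Only cosmetic nitpick: the hypothesis in force here is $F\in L^{4/3}(0,T;X')$ rather than $F\in X'$, but your estimate handles the time-dependent case verbatim.
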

\begin{proof}
	For	fixed	$n\in\mathbb{N^*}$,  		consider the following approximated problem
	\begin{align}\label{cut-off}
	\begin{cases}
	\hspace*{-2cm}&(y_n^M(t)-P_ny_0,v)=\displaystyle\int_0^t(\theta_M(y_n^M)P_nF,v)ds
	+\nu \displaystyle\int_0^t(P_n\Delta y_n^M,v)ds-\int_0^t\big(P_n[\theta_M(y_n^M)y_n^M\cdot \nabla	y_n^M],v\big)ds\\
	&\hspace*{0.5cm}+\alpha\displaystyle\int_0^t(P_n\theta_M(y_n^M)\text{div}(A(y_n^M)^2),v)ds 
	+\beta\displaystyle\int_0^t(P_n\theta_M(y_n^M)\text{div}(|A(y_n^M)|^2A(y_n^M)),v)ds
		\\[0.15cm]&\hspace*{0.5cm}+\displaystyle\int_0^t (P_nG(\cdot,y_n^M),v)d\mathcal{W}\text{	for	all } v\in	H_n,	t\in[0,T]	\text{ and P-a.s.	in	}\Omega.
	\end{cases}
	\end{align}
	Set	$v=e_i,i=1,\cdots,n$	and
note	that	\eqref{cut-off}	define	a	globally Lipschitz continuous	system	of	stochastic	ODEs.	Hence,	by	using	\textit{e.g.}	”Banach fixed point theorem”,	\cite[Thm.	1.12]{Roubicek},	(see	also	\cite[Subsection	4.1]{yas-fer-2}	for similar arguments)	we	infer	the	existence	of	a	  unique predictable solution	$$y_n^M\in	L^2(\Omega;C([0,T];H_n)).$$
 Let us  define   the following  sequence  of stopping times
\begin{align*}
\tau_M^n:=\inf\{t\geq 0: \Vert y_n^M(t)\Vert_{H} \geq M \}\wedge T.
\end{align*}

	Setting
\begin{align}\label{fn}
f_n^M:= \nu \Delta y_n^M+\{-(y_n^M\cdot \nabla)y_n^M+\alpha\text{div}(A(y_n^M)^2) +\beta \text{div}(|A(y_n^M)|^2A(y_n^M))\}\theta_M(y_n^M),
\end{align}
and taking $v=e_i$ in \eqref{cut-off} for each $i=1,\cdots,n$,  we infer 
\begin{align}\label{approximation3rd}
d(y_n^M,e_i)&=(	f_n^M,e_i)	dt+(\theta_M(y_n^M)P_nF,e_i)dt+(G(\cdot,y_n^M),e_i)d\mathcal{W}\notag\\&:=(	f_n^M,e_i)	dt+(\theta_M(y_n^M)P_nF,e_i)dt+\sum_{\k\ge 1}(\sigma_\k(\cdot,y_n^M),e_i)d\beta_\k.
\end{align}
Applying It\^o's formula, we deduce
\begin{align*}
d(y_n^M,e_i)^2=2(y_n^M,e_i)(f_n^M,e_i)	dt&+2(y_n^M,e_i)(\theta_M(y_n^M)P_nF,e_i)dt\\
&+2(y_n^M,e_i)(G(\cdot,y_n^M),e_i)d\mathcal{W}+\sum_{k\geq 1} (\sigma_k(\cdot,y_n^M),e_i)^2dt.
\end{align*}
Let	 $s\in [0,\tau_M^n]$,	summing  over $i=1,\cdots,n$, we obtain
\begin{align*}
&\Vert y_n^M(s)\Vert_2^2-\Vert P_ny_{0}\Vert_2^2=2\int_0^s(	f_n^M,y_n^M)	dt+2\int_0^s(\theta_M(y_n^M)P_nF,y_n^M)dt\\&+2\int_0^s(G(\cdot,y_n^M),y_n^M)d\mathcal{W}+\int_0^s \sum_{i=1}^n\sum_{k\geq 1} (\sigma_k(\cdot,y_n^M),e_i)^2dt=J_1+J_2+J_3+J_4.
\end{align*}
After	an  integration by parts	and	using	that	$b(y_n^M,y_n^M,y_n^M)=0$, we derive
\begin{align*}
&J_1=2\int_0^s(	f_n^M,y_n^M)	dt\\
&=-\nu \int_0^s\Vert A( y_n^M)\Vert_{2}^2dt+2\alpha\int_0^s\theta_M(y_n^M)(\text{div}(A(y_n^M)^2),y_n^M)	dt\\&\qquad\qquad+ 2\beta \int_0^s\theta_M(y_n^M)(	\text{div}(|A(y_n^M)|^2A((y_n^M)),y_n^M)	dt\\
&=-\nu \int_0^s\Vert A( y_n^M)\Vert_{2}^2dt-2\alpha\int_0^s\theta_M(y_n^M)(A( y_n^M)^2,\nabla y_n^M)dt -\beta \int_0^s\theta_M(y_n^M)\int_D|A( y_n^M)|^4dxdt\\
&\leq		-2\nu \int_0^s\Vert \nabla y_n^M\Vert_{2}^2dt-\beta\int_0^s\theta_M(y_n^M)\int_D|A( y_n^M)|^4dxdt+2\vert\alpha\vert\int_0^s\theta_M(y_n^M)\Vert	A(y_n^M)\Vert_4^2\Vert\nabla	y_n^M\Vert_2.
\end{align*}
Since	$\epsilon_0=1-\sqrt{\dfrac{\alpha^2}{2\nu\beta}}\in]0,1[$,	we	get
$$2\vert\alpha\vert\int_0^s\theta_M(y_n^M)\Vert	A(y_n^M)\Vert_4^2\Vert\nabla	y_n^M\Vert_2\leq	2\nu(1-\epsilon_0)\int_0^s\Vert \nabla y_n^M\Vert_{2}^2dt+\beta(1-\epsilon_0)\int_0^s\theta_M(y_n^M)\int_D|A( y_n^M)|^4dxdt.$$
Next,		by	using	the	properties	of	the	projection	$P_n$	we	get
\begin{align*}
	J_2=2\int_0^s(\theta_M(y_n^M)P_nF,y_n^M)dt\leq	2\int_0^s\Vert	F\Vert_{X^\prime}\theta_M(y_n^M)\Vert	y_n^M\Vert_{W^{1,4}_0}	dt	\leq	2C_K\int_0^s\Vert	F\Vert_{X^\prime}\theta_M(y_n^M)\Vert	A(y_n^M)\Vert_{4}	dt,
\end{align*}
where	we	used	\eqref{Korn-ineq}.
By	using	Young	inequality,	one	has	for	any	$\delta>0$
\begin{align*}
J_2\leq	\delta\int_0^s\theta_M(y_n^M)\int_D|A( y_n^M)|^4dxdt+\dfrac{C}{\delta^{\frac{1}{3}}}\int_0^s\Vert	F\Vert_{X^\prime}^{\frac{4}{3}}dt.
\end{align*}
For	$\delta=\dfrac{\beta\epsilon_0}{2}$,	we	infer
\begin{align*}
J_2\leq	\dfrac{\beta\epsilon_0}{2}\int_0^s\theta_M(y_n^M)\int_D|A( y_n^M)|^4dxdt+\dfrac{C}{(\beta\epsilon_0)^{\frac{1}{3}}}\int_0^T\Vert	F\Vert_{X^\prime}^{\frac{4}{3}}dt.
\end{align*}
Concerning  $J_4$,  we have  
\begin{align*}
J_4=\int_0^s \sum_{i=1}^n\sum_{\k\geq 1} (\sigma_{\k}(\cdot,y_n^M),e_i)^2dt=
\int_0^s\sum_{\k\geq 1} \Vert	P_n\sigma_{\k}(\cdot,y_n^M)\Vert_2^2dt  \leq L
\int_0^s \Vert y_n^M\Vert_2^2dt.
\end{align*}
Let us estimate the stochastic term $J_3$. Let	$r\in	]0,T]$,
by using Burkholder–Davis–Gundy  and Young inequalities, there	exists	$C_{B}>0$	such	that
\begin{align*}
2\E\sup_{s\in [0,\tau_M^n\wedge	r]}\vert\int_0^s(G(\cdot,y_n^M),y_n^M)d\mathcal{W}\vert &
\leq C_{B}\E\big[\sum_{\k \ge 1}\int_0^{\tau_M^n\wedge	r}\Vert\sigma_\k(\cdot,y_n^M)\Vert_{2}^2\Vert y_n^M\Vert_2^2ds\big]^{1/2}\\
&\leq \dfrac{1}{2} \E \sup_{s\in [0,\tau_M^n\wedge	r]} \Vert y_n^M\Vert_2^2+2C_B^2L \E\int_0^{\tau_M^n\wedge	r} \Vert y_n^M \Vert_2^2dt.
\end{align*}
Hence,	with	$C(L):=	2L(1+2C_B^2)$,	we	have
\begin{align*}
&\E \sup_{s\in [0,\tau_M^n\wedge	r]} \Vert y_n(s)\Vert_2^2	+4\nu\epsilon_0 \E\int_0^{\tau_M^n\wedge	r}\Vert \nabla y_n^M\Vert_{2}^2dt+\beta\epsilon_0\E\int_0^{\tau_M^n\wedge	r}\theta_M(y_n^M)\int_D|A( y_n^M)|^4dxdt\nonumber\\&\leq 2\E\Vert y_0\Vert_2^2	+C(L)
\E\int_0^{\tau_M^n\wedge	r} \Vert y_n^M\Vert_2^2dt+\dfrac{C}{(\beta\epsilon_0)^{\frac{1}{3}}}\int_0^T\Vert	F\Vert_{X^\prime}^{\frac{4}{3}}dt.
\end{align*}
Then, the Gronwall’s inequality gives 
\begin{align*}
\E \sup_{s\in [0,\tau_M^n]} \Vert y_n^M\Vert_2^2 \leq	e^{C(L)T} (2\E\Vert	y_0\Vert_2^2+\dfrac{C}{(\beta\epsilon_0)^{\frac{1}{3}}}\int_0^T\Vert	F\Vert_{X^\prime}^{\frac{4}{3}}dt):=\mathbf{C}.
\end{align*}
Thus
\begin{align}\label{eqn-stoo-appro}
&\E \sup_{s\in [0,\tau_M^n\wedge	r]} \Vert y_n(s)\Vert_2^2	+4\nu\epsilon_0 \E\int_0^{\tau_M^n\wedge	r}\Vert \nabla y_n^M\Vert_{2}^2dt\nonumber\\&\qquad+\beta\epsilon_0\E\int_0^{\tau_M^n\wedge	r}\theta_M(y_n^M)\int_D|A( y_n^M)|^4dxdt\leq \mathbf{C}.
\end{align}
	Let us fix $n\in \mathbb{N}$. We notice that 
\begin{align*}
M^2P(\tau_M^n <T)\leq	\E(\sup_{s\in [0,\tau_M^n]} 1_{\{ \tau_M^n<T\}} \Vert y_n^M\Vert_2^2) \leq 	\E  \sup_{s\in [0,\tau_M^n]} \Vert y_n^M\Vert_2^2 \leq 	\mathbf{C}.
\end{align*}
	Thus,	there	exists	a	subset		$\tilde{\Omega}\subset	\Omega$	with	full	measure	\textit{i.e.}	$	P(\tilde{\Omega})=1$	such	that:	for	$\omega\in	\tilde{\Omega}$,	there	exists	$M_0$	verifying $\tau_M^n=T$	for	all	$M\geq	M_0$,	see	\textit{e.g.}	\cite[Theorem	1.2.1.]{Breckner}.	Since	$H\hookrightarrow	U^\prime$,	we	get		$\theta_M(u)=1$	for	all	$s\in[0,T]$	and	all	$M\geq	M_0$.	Set	$y_n=y_n^{M_0}=\displaystyle\lim_{M\to\infty}y_n^M$	with	respect	to	$H$-norm	and	notice	that	\eqref{cut-off}	becomes
				\begin{align}\label{system-no-cut}
	\begin{cases}
	(y_n(t),v)&=(P_ny_0,v)+\displaystyle\int_0^t(P_nF,v)ds+\nu \displaystyle\int_0^t(P_n\Delta y_n,v)ds-\int_0^t\big(P_n	[y_n\cdot \nabla	y_n],v\big)ds\\
	&\hspace*{0.5cm}+\alpha\displaystyle\int_0^t(P_n\text{div}(A(y_n)^2),v)ds 
	+\beta\displaystyle\int_0^t(P_n\text{div}(|A(y_n)|^2A(y_n)),v)ds\\[0.15cm]&+\displaystyle\int_0^t (P_nG(\cdot,y_n),v)d\mathcal{W}\text{	for	all } v\in	H_n,	t\in[0,T]	\text{ and P-a.s.	in	}\Omega.
	\end{cases}
	\end{align}
	Finally,	since	 $\tau_M^n \to T$ in probability, as $M\to \infty$	and 
 the sequence $\{\tau_M^n\}_M$ is monotone, the monotone convergence  theorem allows to pass to the limit in	\eqref{eqn-stoo-appro}	and deduce the	existence	of	$C,c>0$	such	that
\begin{align*}
\E \sup_{s\in [0,T]} \Vert y_n(s)\Vert_2^2&+4\nu\epsilon_0\E \int_0^{T}\Vert \nabla y_n\Vert_{2}^2dt+\beta\epsilon_0\E\int_0^{T}\int_D|A(y_n)|^4dxdt\\
&\leq	e^{cT} (\E\Vert	y_0\Vert_2^2+\dfrac{C}{(\beta\epsilon_0)^{\frac{1}{3}}}\int_0^T\Vert	F\Vert_{X^\prime}^{\frac{4}{3}}dt).
\end{align*}
\end{proof}
\subsection{Tightness}\label{compactness}
From	Theorem	\ref{Thm-app}	and	
	\cite[Lemma	2.1]{Flan-Gater},	we	get
\begin{align}\label{frac-noise}
(P_n\int_0^\cdot	G(\cdot,y_n)d\mathcal{W})_n	\text{	is	bounded	in	}	L^2(\Omega;W^{\eta,2}(0,T;(L^2(D))^d))	\text{	for	}	\eta<\dfrac{1}{2}.	
\end{align}	
The	Korn	inequality	(see	\textit{e.g.}	\cite[Thm.	1.33]{Roubicek})		and	
\eqref{estimate1}	ensure	that	$(y_n)_n$	is	bounded	in	$L^4(\Omega\times(0,T);(W^{1,4}_0(D))^d)$.	Moreover,	we	have

\begin{lemma}\label{lemma-boun-dual}
Let $y_n$ be a solution to equation \eqref{approx} given by  Theorem \ref{Thm-app}, and $S$ as defined in 
\eqref{DEF_S}. Then
	\begin{enumerate}
		\item	$(S(y_n))_n$	is	bounded	by	$K>0$	in	$L^{4/3}(\Omega;L^{4/3}(0,T;X^\prime))$.
		\item	
		$\left(\partial_t(y_n-P_n\int_0^\cdot		G(\cdot,y_n)d\mathcal{W})\right)_n$	is	bounded	by	$K>0$	in	$L^{4/3}(\Omega;L^{4/3}(0,T;U^\prime))$.
			\end{enumerate}

\end{lemma}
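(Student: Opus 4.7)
The plan is to prove (1) and (2) separately; both rely on duality estimates combined with the a priori bound \eqref{estimate1}.

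For (1), I would test $S(u)$ against $\phi\in X$ and bound term by term. After one integration by parts, the three contributions to $\langle S(u),\phi\rangle_{X',X}$ become, up to harmless constants, $\nu(A(u),A(\phi))$, $\alpha(A(u)^2,\nabla\phi)$ and $\beta(|A(u)|^2A(u),\nabla\phi)$. Straightforward H\"older estimates combined with Korn's inequality \eqref{Korn-ineq} then yield
\[\Vert S(u)\Vert_{X'}\;\leq\;C\bigl(\Vert u\Vert_X+\Vert u\Vert_X^2+\Vert u\Vert_X^3\bigr).\]
Raising this estimate to the power $4/3$ and integrating over $\Omega\times(0,T)$, the leading term $\Vert u\Vert_X^4$ is directly controlled by \eqref{estimate1} (via Korn), while the lower-order contributions $\Vert u\Vert_X^{4/3}$ and $\Vert u\Vert_X^{8/3}$ are dominated by H\"older's inequality in $(0,T)$ together with Jensen's inequality, producing the desired uniform bound on $(S(y_n))_n$ in $L^{4/3}(\Omega;L^{4/3}(0,T;X'))$.

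For (2), the Galerkin identity \eqref{system-no-cut}, after isolating the stochastic integral, rearranges to
\[\partial_t\!\left(y_n-P_n\!\int_0^{\cdot}G(\cdot,y_n)\,d\mathcal{W}\right)=P_nF-P_n S(y_n)-P_n\bigl[(y_n\cdot\nabla)y_n\bigr]\qquad\text{in }U'.\]
The Sobolev embedding $H^3(D)\hookrightarrow W^{1,4}(D)$, valid in dimension $d\leq 3$, gives $U\hookrightarrow X$; dualizing yields $X'\hookrightarrow U'$, and Remark~\ref{rmq-proj-U} together with duality shows that $P_n$ extends to a contraction on $U'$. Hence $\Vert P_nF\Vert_{U'}\leq C\Vert F\Vert_{X'}$ and $\Vert P_n S(y_n)\Vert_{U'}\leq C\Vert S(y_n)\Vert_{X'}$ both lie in $L^{4/3}(\Omega\times(0,T))$ by the assumption \eqref{data-assumptions} and part~(1). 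The convective term I would handle by integrating by parts using $\text{div}\,y_n=0$ and bounding
\[\bigl|\langle(y_n\cdot\nabla)y_n,\phi\rangle\bigr|=\bigl|(y_n\otimes y_n,\nabla\phi)\bigr|\leq \Vert y_n\Vert_4^2\,\Vert\nabla\phi\Vert_2\leq C\Vert y_n\Vert_X^2\,\Vert\phi\Vert_U,\]
which yields $\Vert P_n[(y_n\cdot\nabla)y_n]\Vert_{U'}\leq C\Vert y_n\Vert_X^2$, bounded in $L^{4/3}(\Omega;L^{4/3}(0,T))$ again by H\"older and \eqref{estimate1}.

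The main delicate point is not any individual estimate but the choice of functional setting: the cubic nonlinearity $\text{div}(|A|^2A)$ forces us to pair with $X=W^{1,4}_0\cap V$ (which fixes the exponent $4/3$), while the time derivative minus the stochastic integral must be measured in the weaker space $U'$ so that the quadratic convective term becomes admissible via the $W^{1,4}\hookrightarrow L^4$ embedding. Once this functional framework is fixed, every individual estimate reduces to H\"older's inequality and the a priori bound \eqref{estimate1}.
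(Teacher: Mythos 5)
Your proposal is correct and follows essentially the same route as the paper: reduce both statements to term-by-term $X^\prime$-duality estimates (H\"older plus Korn), use $X^\prime\hookrightarrow U^\prime$ and the contractivity of $P_n$ on $U^\prime$, and close with the a priori bound \eqref{estimate1}. The only cosmetic difference is in the convective term, which the paper also keeps in $X^\prime$ via the interpolation $\Vert y_n\Vert_{8/3}^{8/3}\leq\Vert y_n\Vert_2^{2/3}\Vert y_n\Vert_\infty^2$ and $W^{1,4}\hookrightarrow L^\infty$, whereas you estimate it directly in $U^\prime$ with $\Vert y_n\Vert_4^2\Vert\nabla\phi\Vert_2$; both are valid for the stated claim.
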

\begin{proof}
	By	using	\eqref{approx},	we	write P-a.s.	in	$\Omega$
			\begin{align*}
	\begin{cases}
	\partial_t(y_n-P_n\int_0^\cdot	G(\cdot,y_n)d\mathcal{W})=P_nF+\nu P_n\Delta y_n-P_n	[y_n\cdot \nabla	y_n]+\alpha	P_n\text{div}(A(y_n)^2)+ 
	\beta	P_n\text{div}(|A(y_n)|^2A(y_n)),\\[0.15cm]
	y_n(0)=P_ny_0,\quad	\text{div	}y_n=0.
	\end{cases}
	\end{align*}

First,	let	$h\in	U^\prime$	and	note	that	
$\Vert	P_n	h\Vert_{U^\prime}\leq	\Vert	h\Vert_{U^\prime}$
thanks	to	Remark	\ref{rmq-proj-U}.	In	addition,	since	$X^\prime	\hookrightarrow	U^\prime$	there	exists	$C>0$	such	that	$\Vert	k\Vert_{U^\prime}\leq	C\Vert	k\Vert_{X^\prime}$	for	any	$k\in	X^\prime.$	Thus,	to	prove	Lemma	\ref{lemma-boun-dual},	it	is	enough	to	show	that	all	the	terms	in	$(\mathcal{A}_n)_n$	is	bounded	in	$L^{4/3}(\Omega;L^{4/3}(0,T;X^\prime))$,	where	\begin{align*}
	\mathcal{A}_n=F+\nu \Delta y_n-y_n\cdot \nabla	y_n+\alpha	\text{div}(A(y_n)^2)+ 
	\beta\text{div}(|A(y_n)|^2A(y_n)).
\end{align*}
Indeed,		we	recall	that	$F\in	L^{\frac{4}{3}}(0,T;X^\prime)$.	Regarding 	the other	terms, 	there	exists	$C>0$	such	that
	\begin{align*}
	\E\int_0^T\Vert	\Delta y_n\Vert_{X^\prime}^{4/3}dt&\leq	C	\E\int_0^T\Vert\nabla y_n\Vert_{L^{4/3}(D)}^{4/3}dt\\&\leq	C(D,T)(\E\int_0^T\Vert\nabla y_n\Vert_{L^{2}(D)}^2dt)^{2/3}\leq	C_*.
	\end{align*}
	Next,	by	using	interpolation	inequality	and	that	$W^{1,4}(D)\hookrightarrow	L^\infty(D)$, we deduce 
\begin{align*}
\E\int_0^T\Vert y_n\cdot \nabla	y_n
\Vert_{X^\prime}^{4/3}&\leq	C	\E\int_0^T\Vert y_n\Vert_{L^{8/3}(D)}^{8/3}dt\leq	C	\E\int_0^T\Vert y_n\Vert_{L^{2}(D)}^{2/3}\Vert y_n\Vert_{L^{\infty}(D)}^2dt\\&	\leq	C(D)\E\int_0^T\Vert y_n\Vert_{L^{2}(D)}^{2/3}\Vert 	y_n\Vert_{W^{1,4}(D)}^2dt\\
&	\leq	C(D)\E\int_0^T\Vert y_n\Vert_{L^{2}(D)}^{4/3}dt+C(D)\E\int_0^T\Vert 	y_n\Vert_{W^{1,4}(D)}^4dt\\
&\leq	C(D)\E\int_0^T\Vert y_n\Vert_{L^{2}(D)}^{2}dt+C(D,T)+C(D)\E\int_0^T\Vert 	y_n\Vert_{W^{1,4}(D)}^4dt	\leq	C_*.
\end{align*}
After	an	integration	by	parts,	we	get
\begin{align*}
\E\int_0^T\Vert	\text{div}(A(y_n)^2)\Vert_{X^\prime}^{4/3}dt&\leq	C
\E\int_0^T\Vert	A(y_n)^2\Vert_{L^{4/3}(D)}^{4/3}dt\leq	C
\E\int_0^T\Vert	\nabla	y_n\Vert_{L^{8/3}(D)}^{8/3}dt\\
&\leq	C
\E\int_0^T\Vert	\nabla	y_n\Vert_{L^{4}(D)}^{4}dt+C(T,D)\leq	C_*,\\
\E\int_0^T\Vert	\text{div}(\vert	A(y_n)\vert^2A(y_n))\Vert_{X^\prime}^{4/3}dt&\leq	C
\E\int_0^T\Vert	A(y_n)\Vert_{L^{4}(D)}^4dt
\leq	C_*,
\end{align*}
	where	we	used	that	$(	y_n)_n$	is	bounded	in	$L^4(\Omega;L^4(0,T;(W^{1,4}(D))^d))\cap	L^2(\Omega;L^\infty(0,T;H))$.	
\end{proof}
The following lemma is proposed to gather the previous estimates.
\begin{lemma}\label{lemma-tight}	Let	$T>0$,	there	exists	$K>0$	independent	of	$n$	such	that
	\begin{enumerate}
	\item	$(y_n)_n$	is	bounded	by	$K$	in	$L^2(\Omega\times	(0,T);V)\cap	L^4(\Omega\times(0,T);(W^{1,4}_0(D))^d)$.
	\item	$(y_n)_n$	is	bounded	by	$K$	in	$L^2(\Omega;C([0,T];H))$.
	\item	$(P_n\int_0^\cdot	G(\cdot,y_n)d\mathcal{W})_n$		is	bounded		by	$K$	in		$L^2(\Omega;W^{\eta,2}(0,T;(L^2(D))^d))	\text{	for	any	}	\eta<\dfrac{1}{2}.$
	\item	$(y_n-P_n\int_0^\cdot	G(\cdot,y_n)d\mathcal{W})_n$	is	bounded	by	$K$	in	$L^2(\Omega\times	(0,T);(L^2(D))^d)$	\\
and		$\partial_t(y_n-P_n\int_0^\cdot	G(\cdot,y_n)d\mathcal{W})_n$	is	bounded	by	$K$	in	$L^{4/3}(\Omega;L^{4/3}(0,T;X^\prime))$.
\item	$(S(y_n))_n$	is	bounded	by	$K>0$	in	$L^{4/3}(\Omega;L^{4/3}(0,T;X^\prime))$.

	\end{enumerate}
\end{lemma}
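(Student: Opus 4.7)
The plan is to assemble the bounds (1)--(5) by collecting estimates already established above; no new computation is required, only a careful reading of what Theorem \ref{Thm-app}, Lemma \ref{lemma-boun-dual} and \eqref{frac-noise} deliver.

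For (1) and (2) I would start from the uniform energy estimate \eqref{estimate1}. The control of $\E\int_0^T\|\nabla y_n\|_2^2\,dt$ together with the Poincaré inequality yields the $L^2(\Omega\times(0,T);V)$ bound, while the term $\beta\epsilon_0\E\int_0^T\int_D|A(y_n)|^4\,dx\,dt$ combined with Korn's inequality \eqref{Korn-ineq} gives the $L^4(\Omega\times(0,T);(W^{1,4}_0(D))^d)$ bound claimed in (1). The $L^2(\Omega;L^\infty(0,T;H))$ bound in \eqref{estimate1} is in fact a bound for $\E\sup_{s\in[0,T]}\|y_n(s)\|_2^2$; since $y_n$ is a.s. continuous as a finite-dimensional Itô process with values in $H_n\subset H$, this immediately gives the stronger $L^2(\Omega;C([0,T];H))$ bound required in (2).

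Statement (3) is exactly the content of \eqref{frac-noise}, which follows from applying \cite[Lemma 2.1]{Flan-Gater} to the stochastic integral $\int_0^\cdot G(\cdot,y_n)\,d\mathcal{W}$ together with the uniform $L^2(\Omega\times(0,T);H)$ bound on $G(\cdot,y_n)$ coming from \eqref{noise1} and (2); the operator norm bound $\|P_n\|_{L(H,H)}\le 1$ ensures that the projection does not deteriorate the estimate. Point (5) is nothing but part (1) of Lemma \ref{lemma-boun-dual}, and the second half of (4) is part (2) of the same lemma, so both can be cited verbatim.

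It remains to justify the first half of (4), namely the $L^2(\Omega\times(0,T);(L^2(D))^d)$ bound on $y_n - P_n\int_0^\cdot G(\cdot,y_n)\,d\mathcal{W}$. For this I would simply use the triangle inequality: the first piece is controlled by (2) since $C([0,T];H)\hookrightarrow L^2(0,T;H)$, and the second piece is controlled by (3) via the continuous embedding $W^{\eta,2}(0,T;(L^2(D))^d)\hookrightarrow L^2(0,T;(L^2(D))^d)$. There is no genuine obstacle in this lemma since every estimate has already been earned; the only care needed is to propagate the projection bound $\|P_n\|_{L(U^\prime,U^\prime)}\le 1$ (Remark \ref{rmq-proj-U}) and the embedding $X^\prime\hookrightarrow U^\prime$ when transferring the bounds of Lemma \ref{lemma-boun-dual} from $X^\prime$ to $U^\prime$, which was already arranged there.
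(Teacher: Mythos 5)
Your proposal is correct and follows exactly the route the paper intends: Lemma \ref{lemma-tight} is stated there only to gather the estimates already obtained, namely \eqref{estimate1} from Theorem \ref{Thm-app} (combined with Poincar\'e and Korn's inequality \eqref{Korn-ineq}), the fractional-Sobolev bound \eqref{frac-noise}, and Lemma \ref{lemma-boun-dual}, with the first half of point (4) obtained by the same triangle-inequality argument you describe. The only wrinkle, inherited from the paper itself rather than introduced by you, is that Lemma \ref{lemma-boun-dual}(2) literally states the bound for the projected time derivative in $U^\prime$ while Lemma \ref{lemma-tight}(4) states it in $X^\prime$; since the proof of Lemma \ref{lemma-boun-dual} actually establishes the $L^{4/3}(\Omega;L^{4/3}(0,T;X^\prime))$ bound for the unprojected drift $\mathcal{A}_n$, your citation is in substance what is needed.
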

Let	us	define	the	space
$$	\mathbb{W}=\{v:	v\in	L^2(0,T;H),	\quad	\partial_t	v\in	L^{4/3}(0,T;X^\prime)	\}.$$
From	Lemma	\ref{lemma-tight}$_{(4)}$,
	we	know that	$(y_n-P_n\int_0^\cdot		G(\cdot,y_n)d\mathcal{W})_n$	is bounded	in	$L^{4/3}(\Omega;\mathbb{W})$.	On	the	other	hand,	note	that	$L^{4/3}(\Omega;\mathbb{W})\hookrightarrow	L^{4/3}(\Omega;W^{\eta,4/3}(0,T;X^\prime))$  for $0<\eta\leq 1$.	By	using  the expression		
	$$y_n=y_n-P_n\int_0^\cdot	G(\cdot,y_n)d\mathcal{W}+P_n\int_0^\cdot		G(\cdot,y_n)d\mathcal{W}$$	and	Lemma	\ref{lemma-tight}$_{(3)}$,	we	obtain			the	 next	result
\begin{cor}\label{cor-tight}
	$(y_n)_n$	is	bounded	by	 a positive constant $K$	in	$L^{4/3}(\Omega;W^{\eta,4/3}(0,T;X^\prime))$	for	any	$0<\eta<\dfrac{1}{2}.$
\end{cor}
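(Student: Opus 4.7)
The plan is to directly exploit the decomposition suggested immediately before the statement, namely
\[
y_n = \Bigl(y_n - P_n\int_0^\cdot G(\cdot,y_n)\,d\mathcal{W}\Bigr) + P_n\int_0^\cdot G(\cdot,y_n)\,d\mathcal{W},
\]
and handle the two summands separately using items $(3)$ and $(4)$ of Lemma \ref{lemma-tight}. The only real work is to verify the continuous embeddings between the relevant function spaces; no new estimate needs to be proved.

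First I would treat the ``deterministic-like'' part $z_n := y_n - P_n\int_0^\cdot G(\cdot,y_n)\,d\mathcal{W}$. By Lemma \ref{lemma-tight}$_{(4)}$, the sequence $(z_n)_n$ is bounded in $L^{4/3}(\Omega;\mathbb{W})$. Since $H\hookrightarrow X^\prime$ continuously and $(0,T)$ is bounded, one has
\[
L^2(0,T;H)\hookrightarrow L^{4/3}(0,T;X^\prime),
\]
so any $v\in \mathbb{W}$ actually lies in $W^{1,4/3}(0,T;X^\prime)$, with a continuous embedding $\mathbb{W}\hookrightarrow W^{1,4/3}(0,T;X^\prime)$. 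In turn $W^{1,4/3}(0,T;X^\prime)\hookrightarrow W^{\eta,4/3}(0,T;X^\prime)$ for every $0<\eta<1$, which yields a uniform bound for $(z_n)_n$ in $L^{4/3}(\Omega;W^{\eta,4/3}(0,T;X^\prime))$ for any such $\eta$.

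Next, for the stochastic part $M_n:= P_n\int_0^\cdot G(\cdot,y_n)\,d\mathcal{W}$, Lemma \ref{lemma-tight}$_{(3)}$ gives uniform boundedness in $L^2(\Omega;W^{\eta,2}(0,T;(L^2(D))^d))$ for every $\eta<\tfrac12$. Since $X$ continuously embeds into $(L^2(D))^d$ (via $W^{1,4}(D)\hookrightarrow L^2(D)$ on the bounded domain $D$), duality gives $(L^2(D))^d\hookrightarrow X^\prime$. Combining this with $L^2(0,T)\hookrightarrow L^{4/3}(0,T)$ and $L^2(\Omega)\hookrightarrow L^{4/3}(\Omega)$, we get a continuous embedding
\[
L^2(\Omega;W^{\eta,2}(0,T;(L^2(D))^d))\hookrightarrow L^{4/3}(\Omega;W^{\eta,4/3}(0,T;X^\prime)),
\]
so $(M_n)_n$ is also bounded in the target space for any $\eta<\tfrac12$.

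Adding the two bounds via the triangle inequality yields the desired uniform bound on $(y_n)_n$ in $L^{4/3}(\Omega;W^{\eta,4/3}(0,T;X^\prime))$ for every $0<\eta<\tfrac12$, with the exponent $\tfrac12$ being dictated by the stochastic summand. There is no substantial obstacle here: everything reduces to assembling the two regularities from Lemma \ref{lemma-tight} and checking the routine embeddings $H\hookrightarrow X^\prime$, $(L^2(D))^d\hookrightarrow X^\prime$, and $W^{1,p}\hookrightarrow W^{\eta,p}$ for $\eta<1$ on a bounded time interval. The only mild subtlety worth flagging is that the integrability exponent in $\Omega$ drops from $2$ to $4/3$ for the stochastic piece, which is harmless since $\Omega$ has finite measure.
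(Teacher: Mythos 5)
Your proof is correct and follows essentially the same route as the paper: the same decomposition of $y_n$ into $y_n-P_n\int_0^\cdot G(\cdot,y_n)\,d\mathcal{W}$ plus the stochastic integral, with the first summand controlled via Lemma \ref{lemma-tight}$_{(4)}$ and the embedding $\mathbb{W}\hookrightarrow W^{\eta,4/3}(0,T;X^\prime)$, and the second via Lemma \ref{lemma-tight}$_{(3)}$ together with the routine embeddings $(L^2(D))^d\hookrightarrow X^\prime$ and the lowering of the exponents from $2$ to $4/3$ on the finite measure spaces $\Omega$ and $(0,T)$. The paper states this more tersely but the content is identical, including the observation that the restriction $\eta<\tfrac12$ comes from the stochastic term.
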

 Furthermore, we	have	the	following	result:			
\begin{lemma}\label{Lemma-Aldous}
	Let		$(\tau_n)_{n\in\mathbb{N}}$	be	a	sequence		of	$(\mathcal{F}_t)_{t\in[0,T]}$-stopping	times	with	$\tau_n\leq	T$.	Then,
	\begin{align}
	\forall	\theta>0	\quad\forall	\eta>0	\quad\exists	\delta>0	\text{	such	that	}	\displaystyle\sup_{n\in	\mathbb{N}}\sup_{0\leq	\epsilon\leq	\delta}P(\Vert	y_n(\tau_n+\epsilon)-y_n(\tau_n)\Vert_{U^\prime}	\geq	\eta)\leq	\theta.
	\end{align}
\end{lemma}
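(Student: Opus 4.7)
The plan is to verify the Aldous condition by reducing it, via Markov's inequality, to a uniform estimate of $\E\Vert y_n(\tau_n+\epsilon)-y_n(\tau_n)\Vert_{U^\prime}^{4/3}$ that vanishes with $\epsilon$ uniformly in $n$ and in the stopping time $\tau_n\le T$. Since $X^\prime\hookrightarrow U^\prime$ and $P_n$ is a contraction on $U^\prime$ by Remark~\ref{rmq-proj-U}, I would work throughout in the $U^\prime$ norm by controlling each piece in $X^\prime$ whenever possible, exactly as in Lemma~\ref{lemma-boun-dual}.

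First I would split, using \eqref{approx},
\begin{align*}
y_n(\tau_n+\epsilon)-y_n(\tau_n)=\int_{\tau_n}^{\tau_n+\epsilon}\mathcal{A}_n(s)\,ds+\int_{\tau_n}^{\tau_n+\epsilon}P_nG(\cdot,y_n)\,d\mathcal{W}=:I_n+J_n,
\end{align*}
with $\mathcal{A}_n=P_n F+\nu P_n\Delta y_n-P_n[y_n\cdot\nabla y_n]+\alpha P_n\mathrm{div}(A(y_n)^2)+\beta P_n\mathrm{div}(|A(y_n)|^2A(y_n))$. For the deterministic piece $I_n$, Hölder's inequality in time gives
\begin{align*}
\E\Vert I_n\Vert_{X^\prime}^{4/3}\le \epsilon^{1/3}\,\E\int_{\tau_n}^{\tau_n+\epsilon}\Vert\mathcal{A}_n(s)\Vert_{X^\prime}^{4/3}\,ds\le K\,\epsilon^{1/3},
\end{align*}
where the last inequality is precisely the bound on the five summands of $\mathcal{A}_n$ established in the proof of Lemma~\ref{lemma-boun-dual} together with $F\in L^{4/3}(0,T;X^\prime)$. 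Embedding $X^\prime\hookrightarrow U^\prime$ then yields $\E\Vert I_n\Vert_{U^\prime}^{4/3}\le C\epsilon^{1/3}$, uniformly in $n$ and $\tau_n$.

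For the stochastic piece $J_n$, I would use the Burkholder–Davis–Gundy inequality in the Hilbert space $(L^2(D))^d$, the contraction of $P_n$, and the linear growth $\sum_{\k}|\sigma_\k(\cdot,y_n)|^2\le L|y_n|^2$ coming from \eqref{noise1}, to obtain
\begin{align*}
\E\Vert J_n\Vert_2^2\le C\,\E\int_{\tau_n}^{\tau_n+\epsilon}\sum_{\k\ge1}\Vert\sigma_\k(\cdot,y_n)\Vert_2^2\,ds\le CL\,\E\int_{\tau_n}^{\tau_n+\epsilon}\Vert y_n\Vert_2^2\,ds\le CL\,\epsilon\,\E\sup_{[0,T]}\Vert y_n\Vert_2^2\le K\epsilon,
\end{align*}
by Lemma~\ref{lemma-tight}$_{(2)}$. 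Interpolating, $\E\Vert J_n\Vert_{U^\prime}^{4/3}\le C(\E\Vert J_n\Vert_2^2)^{2/3}\le K\epsilon^{2/3}$. Combining, for $\epsilon\le 1$,
\begin{align*}
\E\Vert y_n(\tau_n+\epsilon)-y_n(\tau_n)\Vert_{U^\prime}^{4/3}\le K\epsilon^{1/3}.
\end{align*}
Markov's inequality then gives $P(\Vert y_n(\tau_n+\epsilon)-y_n(\tau_n)\Vert_{U^\prime}\ge\eta)\le K\epsilon^{1/3}/\eta^{4/3}$, and it suffices to choose $\delta=(\theta\eta^{4/3}/K)^3\wedge 1$.

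The only genuinely delicate point is to ensure that the $X^\prime$-estimates of the nonlinear terms hold with integrand depending only on quantities bounded in Lemma~\ref{lemma-tight} (i.e.\ $L^2_tH$, $L^4_t(W^{1,4})$-norms of $y_n$); the interpolation used in Lemma~\ref{lemma-boun-dual} for the convective and third-grade terms is exactly what supplies a finite integrable majorant on which Hölder in the $\epsilon$-interval can then act. Once that is in place, the rest is a routine application of BDG and Markov.
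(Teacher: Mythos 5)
Your proof is correct and follows essentially the same route as the paper: the same splitting into the drift integral and the stochastic integral, the same H\"older-in-time estimate of the drift via the $L^{4/3}(\Omega\times(0,T);X^\prime)$ bounds of Lemma \ref{lemma-boun-dual}, and the same It\^o isometry/BDG bound for the noise, followed by Chebyshev. The only cosmetic difference is that you merge the two pieces into a single $4/3$-moment estimate before applying Markov's inequality, whereas the paper applies Chebyshev to each piece separately (with exponents $1$ and $2$); both yield the Aldous condition.
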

\begin{proof}
	Taking 	$0\leq	s\leq	t\leq	T$,		we	have
	\begin{align*}
	y_n(t)-y_n(s)&=\int_s^t[P_nF+\nu P_n\Delta y_n-P_n	[y_n\cdot \nabla	y_n]+\alpha	P_n\text{div}(A(y_n)^2)+ 
	\beta	P_n\text{div}(|A(y_n)|^2A(y_n))]ds\\
	&\qquad+P_n\int_s^t	G(\cdot,y_n)d\mathcal{W}=I_1^n(s,t)+I_2^n(s,t)
	\end{align*}
	Let	$(\tau_n)_{n\in\mathbb{N}}$	be	a	sequence	of	stopping	times	such	that	$0\leq	\tau_n\leq	T$	and	$\epsilon>0$.
	By	using	Holder	inequality	and	Lemma	\ref{lemma-tight},	there	exists	$C>0$	such	that	
	\begin{align}\label{Aldous1}
	\E	\Vert	I_1^n(\tau_n,\tau_n+\epsilon)\Vert_{U^\prime}\leq	C	\E	\Vert	I_1^n(\tau_n,\tau_n+\epsilon)\Vert_{X^\prime}\leq	CK	\epsilon^{1/4}.
	\end{align}
	Concerning	$I_2^n(s,t)$,	by	using	\eqref{noise1}	
	\begin{align}
	\E	\Vert	I_2^n(\tau_n,\tau_n+\epsilon)\Vert_{U^\prime}^2&=\sum_{\k\geq 1} \E		\int_{\tau_n}^{\tau_n+\epsilon}\Vert	P_n\sigma_{\k}(\cdot,y_n)\Vert_{U^\prime}^2	ds\notag \\
	&\leq	C\sum_{\k\geq 1} \E		\int_{\tau_n}^{\tau_n+\epsilon}\Vert	\sigma_{\k}(\cdot,y_n)\Vert_{2}^2	ds\leq	CL	 \E		\int_{\tau_n}^{\tau_n+\epsilon}\Vert	y_n\Vert_{2}^2ds	\leq	C\epsilon,	\label{Aldous2}
	\end{align}
	thanks	to	Lemma	\ref{lemma-tight}$_{(2)}$.	Let	$\eta>0$	and	$\theta>0$,	by	using	\eqref{Aldous1},	we	infer that
	\begin{align}
	P(\Vert	I_1^n(\tau_n,\tau_n+\epsilon)\Vert_{U^\prime}	\geq	\eta)	\leq	\dfrac{1}{\eta}\E	\Vert	I_1^n(\tau_n,\tau_n+\epsilon)\Vert_{U^\prime}\leq		\dfrac{CK	\epsilon^{1/4}}{\eta},	\quad	n\in	\mathbb{N}.
	\end{align}
	Set	$\delta_1:=(\dfrac{\eta}{CK}\theta)^{4}$.	Then,	we	get
	$	\displaystyle\sup_{n\in	\mathbb{N}}\sup_{0\leq	\epsilon\leq	\delta_1}P(\Vert	I_1^n(\tau_n,\tau_n+\epsilon)\Vert_{U^\prime}	\geq	\eta)\leq	\theta.	$
	On	the	other	hand,	by	using	\eqref{Aldous2}	
	\begin{align}
	P(\Vert	I_2^n(\tau_n,\tau_n+\epsilon)\Vert_{U^\prime}	\geq	\eta)	\leq	\dfrac{1}{\eta^2}\E	\Vert	I_2^n(\tau_n,\tau_n+\epsilon)\Vert_{U^\prime}^2\leq		\dfrac{C	\epsilon}{\eta^2},	\quad	n\in	\mathbb{N}.
	\end{align}
	Setting	$\delta_2:=\dfrac{\eta^2}{C}\theta$,		we obtain	$	\displaystyle\sup_{n\in	\mathbb{N}}\sup_{0\leq	\epsilon\leq	\delta_2}P(\Vert	I_1^n(\tau_n,\tau_n+\epsilon)\Vert_{U^\prime}	\geq	\eta)\leq	\theta,	$
	which	completes	the	proof.\end{proof}
Define	\begin{align}\label{space-tightness-sol}
\mathbf{Z}:=C([0,T];U^\prime)\cap	C([0,T];H_{weak})\cap	L^2(0,T;H),
\end{align}
where	$E_{weak}$	represents a Banach	space	$E$ endowed with the weak topology.
In addition, we denote  	by	$(\mathbf{Z},\mathcal{T})$	the	topological	product space	$\mathbf{Z}$	endowed	with		the	supremum	of	the	corresponding	topologies	$\mathcal{T}$.
Let	us	introduce  the following space
$$	\mathbf{Y}:=C([0,T]; H_0)\times	\mathbf{Z}\times H$$
Denote by $\mu_{y_n}$ the law of $y_n$  on $\mathbf{Z}$,  $\mu_{y_0^n}$ the law of $P_ny_0$ on $H$,  and $\mu_{\mathcal{W}}$ the law of $\mathcal{W}$ on $C([0,T]; H_0)$ and their joint law on $\mathbf{Y}$ by $\mu_n$.
\begin{lemma}\label{tight-force} The set	$\{ \mu_{y_0^n}; n\in \mathbb{N}\}$  is tight on   $H$. 
\end{lemma}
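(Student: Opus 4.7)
The strategy is to reduce the tightness of the sequence $\{\mu_{y_0^n}\}_n$ to the (automatic) tightness of the single Borel probability measure $\mu_{y_0}$ on the Polish space $H$, by showing that $y_0^n$ converges to $y_0$ in law.

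First, I would exploit the fact that $(e_i)_{i\geq 1}$ is an orthonormal basis of $H$ and $P_n$ is the orthogonal projection onto $H_n=\mathrm{span}\{e_1,\dots,e_n\}$, so $P_n u \to u$ in $H$ for every $u \in H$. Applying this pointwise, $P_n y_0(\omega) \to y_0(\omega)$ in $H$ for $P$-a.e.\ $\omega\in\Omega$. Since $\|P_n y_0\|_H \leq \|y_0\|_H$ $P$-a.s.\ and $y_0 \in L^q(\Omega;H)$ with $q>2$, Lebesgue's dominated convergence theorem yields $y_0^n \to y_0$ strongly in $L^2(\Omega;H)$, and in particular in probability in $H$.

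Second, convergence in probability in the Polish space $H$ implies weak convergence of the associated laws, so $\mu_{y_0^n}\to \mu_{y_0}$ weakly on $H$. Since $H$ is Polish, Prokhorov's theorem guarantees that every weakly convergent (hence relatively weakly compact) sequence of Borel probability measures on $H$ is tight, which yields the tightness of $\{\mu_{y_0^n}\}_n$.

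The argument is essentially routine; the only subtle point is the passage from the pointwise strong convergence $P_n\to I$ on $H$ together with the $L^q$-integrability hypothesis on $y_0$ to the weak convergence of laws, and then the appeal to Prokhorov's theorem to upgrade this into the tightness of the entire family. If one prefers a self-contained proof bypassing Prokhorov, the same conclusion can be obtained directly: given $\varepsilon>0$, choose a compact $K_\varepsilon\subset H$ with $P(y_0\in K_\varepsilon^c)\leq \varepsilon/2$ (tightness of the single law $\mu_{y_0}$), fix $\delta>0$, and pick $n_0$ such that $P(\|y_0^n-y_0\|_H\geq \delta)\leq \varepsilon/2$ for $n\geq n_0$, so that $y_0^n$ lies in the $\delta$-enlargement of $K_\varepsilon$ with probability at least $1-\varepsilon$; a diagonal argument over a decreasing sequence $\delta_k\to 0$ then produces a totally bounded, hence relatively compact, subset of $H$ carrying mass at least $1-\varepsilon$ uniformly in $n$, with the finitely many initial measures handled individually.
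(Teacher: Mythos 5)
Your argument is correct and coincides with the paper's: both establish $P_n y_0\to y_0$ strongly in $L^2(\Omega;H)$ (the paper records this already in the Faedo--Galerkin subsection via dominated convergence, exactly as you do), pass to convergence of the laws on the Polish space $H$, and invoke Prokhorov's theorem to deduce tightness of the convergent sequence of measures. The self-contained $\delta$-enlargement variant you sketch at the end is a nice optional addition but is not needed and is not in the paper.
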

\begin{proof} 
	 We know that  $P_ny_0$ converges strongly to $y_0$ in $L^2(\Omega; H)$. Since $H$ is separable Banach space, from Prokhorov theorem,  for any $\epsilon >0$, there exists a compact set $K_\epsilon \subset	H$ such that $$\mu_{y_0^n}(K_\epsilon)=P(P_ny_0 \in K_\epsilon) \geq 1-\epsilon.$$
\end{proof}
 Taking into account that the law $\mu_{\mathcal{W}}$  is a Radon measure on  $C([0,T]; H_0)$,	we	obtain
\begin{lemma}\label{tight}
	The set  $\{\mu_{\mathcal{W}}\}$ is tight on   $C([0,T]; H_0)$.
\end{lemma}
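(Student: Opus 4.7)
The plan is to exploit the fact that the lemma is essentially a statement about a single probability measure on a Polish space, and any such measure is automatically tight (Ulam's theorem). So the strategy reduces to verifying that the underlying space is Polish and then quoting the general fact.

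First, I would recall that $\mathbb{H}$ is a separable Hilbert space and that $H_0$, as constructed via the weighted series $\Vert u\Vert_{H_0}^2=\sum_{\k\geq 1}\gamma_\k^2/\k^2$, is also a separable Hilbert space (it is isometric to a weighted $\ell^2$ space). Consequently, the path space $C([0,T];H_0)$, endowed with the supremum norm, is a separable Banach space and in particular a Polish space. Since $\mathcal{W}$ is, $P$-a.s., a $C([0,T];H_0)$-valued random variable (as recalled from \cite[Chapter 4]{Daprato}), its law $\mu_{\mathcal{W}}$ is a well-defined Borel probability measure on this Polish space.

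Next, by Ulam's theorem (equivalently, the statement that every Borel probability measure on a Polish space is a Radon measure), one has that for every $\varepsilon>0$ there exists a compact subset $K_\varepsilon\subset C([0,T];H_0)$ such that
\begin{equation*}
\mu_{\mathcal{W}}(K_\varepsilon)=P(\mathcal{W}\in K_\varepsilon)\geq 1-\varepsilon.
\end{equation*}
This is precisely the definition of tightness for the singleton family $\{\mu_{\mathcal{W}}\}$, and the lemma follows.

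There is essentially no obstacle in the argument; the only minor verification is that $H_0$ is indeed Polish, which is transparent from its explicit definition. One could alternatively mimic the short proof of Lemma \ref{tight-force}: since $\mathcal{W}$ takes values in the Polish space $C([0,T];H_0)$, a direct application of Prokhorov's theorem (which characterizes Radon measures via tightness on Polish spaces) yields the conclusion in one line.
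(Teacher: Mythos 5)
Your proof is correct and follows essentially the same route as the paper: the paper simply observes that $\mu_{\mathcal{W}}$ is a Radon measure on $C([0,T];H_0)$ and concludes, while you additionally justify this by noting that $C([0,T];H_0)$ is Polish (since $H_0$ is a separable Hilbert space) and invoking Ulam's theorem, which is exactly the missing detail behind the paper's one-line argument.
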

Lemma	\ref{Lemma-Aldous}	ensures	that	$(y_n)_n$	satisfies				\cite[Condition	(\textbf{A});	Definition	3.7]{Brez2013})	in	the	space
	$ C([0,T];U^\prime)$.
Moreover,	thanks	to	Lemma	\ref{lemma-tight},	Lemma	\ref{Lemma-Aldous}	and	\cite[Corollary	3.9]{Brez2013},	we	get
\begin{lemma}\label{tight-2}
	The set $\{ \mu_{y_n}; n\in \mathbb{N}\}$ is tight on $(\mathbf{Z},\mathcal{T}).$
\end{lemma}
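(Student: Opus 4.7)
The plan is to invoke the general tightness criterion of Brze\'zniak--Motyl, namely \cite[Corollary 3.9]{Brez2013}, which is tailor-made for the intersection space $\mathbf{Z}=C([0,T];U^\prime)\cap C([0,T];H_{weak})\cap L^2(0,T;H)$. That corollary reduces tightness on $(\mathbf{Z},\mathcal{T})$ to a pair of ingredients that I have essentially already accumulated in the preceding lemmas: (i) uniform moment bounds in the ``base'' spaces $L^\infty(0,T;H)$ and $L^2(0,T;V)$, and (ii) an Aldous-type stochastic equicontinuity in the weakest topology, i.e.\ in $U^\prime$.

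For (i), I would simply quote Lemma \ref{lemma-tight}: the sequence $(y_n)_n$ is bounded in $L^2(\Omega;C([0,T];H))$ and in $L^2(\Omega\times(0,T);V)$, with a uniform bound $K$ independent of $n$. Combined with the compact embeddings $V\hookrightarrow H\hookrightarrow U^\prime$ (the first being a standard Rellich--Kondrachov type compactness arising from the Sobolev embeddings recalled in \eqref{Sobolev-embedding}, the second being built into the choice $U=(H^3(D))^d\cap V$), these bounds supply, via Chebyshev's inequality, the finite-dimensional ``mass'' estimates that the corollary requires in order to produce, for any $\varepsilon>0$, a compact set $K_\varepsilon\subset\mathbf{Z}$ with $\mu_{y_n}(K_\varepsilon)\geq 1-\varepsilon$.

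For (ii), I would cite Lemma \ref{Lemma-Aldous}, which is precisely the Aldous condition in $U^\prime$: for every $\eta,\theta>0$ there is a $\delta>0$ such that $\sup_{n}\sup_{0\le \varepsilon\le \delta}P(\Vert y_n(\tau_n+\varepsilon)-y_n(\tau_n)\Vert_{U^\prime}\geq \eta)\leq \theta$ for every sequence $(\tau_n)$ of $[0,T]$-valued stopping times. This gives stochastic equicontinuity in the weakest component of $\mathbf{Z}$ and, together with the moment bounds, triggers the conclusion of \cite[Corollary 3.9]{Brez2013}. The corollary produces, simultaneously, relative compactness in $C([0,T];U^\prime)$ (from Arzel\`a--Ascoli plus the Aldous condition and the $H$-bound), in $C([0,T];H_{weak})$ (from the uniform $L^\infty(0,T;H)$ bound together with weak continuity inherited via the $U^\prime$-continuity), and in $L^2(0,T;H)$ (from Aubin--Lions--Simon-type compactness using the $V$-bound and the $U^\prime$-equicontinuity).

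The only genuine obstacle, which is already abstracted away by citing the corollary, is checking that the chain $V\hookrightarrow H\hookrightarrow U^\prime$ interpolates correctly through the three topologies defining $\mathbf{Z}$; this is exactly why the space $U=(H^3(D))^d\cap V$ was introduced, so that the Galerkin projections $P_n$ behave well (cf. Remark \ref{rmq-proj-U}) and so that $H\hookrightarrow U^\prime$ is compact, which is what ties the Aldous condition in $U^\prime$ to equicontinuity usable for tightness in $C([0,T];H_{weak})$. Once all hypotheses are verified, the conclusion that $\{\mu_{y_n}\}$ is tight on $(\mathbf{Z},\mathcal{T})$ is immediate.
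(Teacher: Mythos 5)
Your proposal is correct and follows essentially the same route as the paper: the authors likewise verify the Aldous condition in $U^\prime$ via Lemma \ref{Lemma-Aldous} and combine it with the uniform bounds of Lemma \ref{lemma-tight} in $L^2(\Omega;C([0,T];H))$ and $L^2(\Omega\times(0,T);V)$ to invoke \cite[Corollary 3.9]{Brez2013} on $(\mathbf{Z},\mathcal{T})$. The additional commentary on the compact embeddings $V\hookrightarrow H\hookrightarrow U^\prime$ is consistent with the paper's setup and does not change the argument.
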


As a conclusion, we have the following corollary: 
\begin{cor}\label{comapct-law}
	The set of joint law $\{\mu_n; n\in \mathbb{N}\}$ is tight on $\mathbf{Y}$.
\end{cor}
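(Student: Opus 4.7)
The plan is to deduce the joint tightness of $\{\mu_n\}$ on the product space $\mathbf{Y} = C([0,T]; H_0) \times \mathbf{Z} \times H$ directly from the three marginal tightness statements already at our disposal, namely Lemma \ref{tight-force} for the initial data, Lemma \ref{tight} for the Wiener process, and Lemma \ref{tight-2} for the approximate solutions. This is a soft argument: once the three marginals are tight, the joint family is automatically tight on the Cartesian product because finite products of compact sets are compact in the product topology (Tychonoff's theorem for finite products).

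Concretely, fix $\varepsilon > 0$. By Lemma \ref{tight}, there exists a compact set $K_1 \subset C([0,T]; H_0)$ such that $P(\mathcal{W} \in K_1) \geq 1 - \varepsilon/3$. By Lemma \ref{tight-2}, there exists a compact set $K_2 \subset (\mathbf{Z}, \mathcal{T})$ such that $P(y_n \in K_2) \geq 1 - \varepsilon/3$ uniformly in $n \in \mathbb{N}$. By Lemma \ref{tight-force}, there exists a compact set $K_3 \subset H$ such that $P(P_n y_0 \in K_3) \geq 1 - \varepsilon/3$ uniformly in $n$. Set $K_\varepsilon := K_1 \times K_2 \times K_3$; this is a compact subset of $\mathbf{Y}$ endowed with the product topology. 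A union-bound applied to the three complements then gives
\begin{equation*}
\mu_n(K_\varepsilon) \;=\; P\bigl((\mathcal{W}, y_n, P_n y_0) \in K_\varepsilon\bigr) \;\geq\; 1 - \varepsilon
\end{equation*}
for every $n \in \mathbb{N}$, which is precisely the tightness of $\{\mu_n\}$ on $\mathbf{Y}$.

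I do not anticipate any genuine obstacle, since all the analytic work, namely the uniform bounds of Lemma \ref{lemma-tight}, the fractional regularity estimate \eqref{frac-noise}, and the Aldous-type condition of Lemma \ref{Lemma-Aldous}, has already been absorbed into Lemmas \ref{tight-force}, \ref{tight}, \ref{tight-2}. The corollary is a formal consequence, and the only point worth being slightly careful about is to state explicitly that $\mathbf{Y}$ is equipped with the product topology (which is the natural one here) so that the product of the three compact sets $K_1, K_2, K_3$ is indeed compact in $\mathbf{Y}$.
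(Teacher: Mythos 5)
Your proof is correct and is exactly the argument the paper leaves implicit: the paper simply states the corollary "as a conclusion" of Lemmas \ref{tight-force}, \ref{tight} and \ref{tight-2}, and the intended justification is precisely your product-of-compacts plus union-bound argument. Nothing further is needed.
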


\subsection{Subsequence extractions}\label{sub-extraction}
By	using Corollary \ref{comapct-law}	and	Jakubowski’s version of the Skorokhod Theorem	in	non	metric	spaces,	\cite[Theorem	2]{Jakubowski97}	(see	also	\cite[Corollary	3.12]{Brez2013}),	we	 can extract a  subsequence   $(n_k)_{k\in\mathbb{N}}$ 
such	that	the	following	lemma	holds.
\begin{lemma}\label{skorohod-cv} There exists a probability space $(\overline{ \Omega}, \overline{\mathcal{F}},\overline	P)$, and a family of $\mathbf{Y}$-valued random variables $\{ (\overline{\mathcal{W}}_k, \overline{ y_k},  \overline{y_0^k}) , k \in \mathbb{N}\}$  and $\{( \mathcal{W}_\infty, y_\infty,  \bar y_0)\}$  defined  on $(\overline{ \Omega}, \overline{\mathcal{F}},\overline	P)$ such that
	\begin{enumerate}
		\item $\mu_{n_k}=\mathcal{L}( \overline{\mathcal{W}}_k, \overline{y_k},  \overline{y_0^k} ), \forall k \in \mathbb{N}$;
		\item $( \overline{\mathcal{W}}_k, \overline{y_k},  \overline{y_0^k} )$ converges to $(  \mathcal{W}_\infty, y_\infty,\bar y_0)$ $\overline P$-a.s. in $\mathbf{Y}$;
	\end{enumerate}
\end{lemma}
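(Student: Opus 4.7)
The plan is to obtain this representation as a direct application of Jakubowski's generalization of the Skorokhod theorem to non-metric spaces, as formulated in \cite{Jakubowski97} and recalled in \cite[Corollary 3.12]{Brez2013}. This requires two ingredients: tightness of the joint laws $(\mu_n)_n$ on $\mathbf{Y}$, and the existence on $\mathbf{Y}$ of a countable family of continuous real-valued maps that separates points. The first ingredient is Corollary \ref{comapct-law}, so only the second point needs justification.

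The outer factors $C([0,T];H_0)$ and $H$ are separable Polish spaces, on which the existence of a countable separating family of continuous functionals is standard. For the middle factor $\mathbf{Z}=C([0,T];U^\prime)\cap C([0,T];H_{weak})\cap L^2(0,T;H)$ equipped with the supremum topology $\mathcal T$, the separable metric components $C([0,T];U^\prime)$ and $L^2(0,T;H)$ each contribute their own countable separating families. The delicate component is $C([0,T];H_{weak})$; here I would use the evaluation maps $y\mapsto (y(t_j),h_i)$ with $(t_j)_j$ a dense sequence in $[0,T]$ and $(h_i)_i$ a dense sequence in $H$. These maps are continuous on $C([0,T];H_{weak})$ by construction, and separate points because weak equality of $H$-valued continuous paths at a dense set of times against a dense set of test functions implies equality in $C([0,T];H_{weak})$. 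The union of these three countable families, precomposed with the natural projections, yields the desired separating family on the product space $\mathbf{Y}$.

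Granted this, Jakubowski's theorem together with the tightness from Corollary \ref{comapct-law} produces a subsequence $(n_k)_k$, a probability space $(\overline\Omega,\overline{\mathcal F},\overline P)$, and $\mathbf{Y}$-valued random variables $(\overline{\mathcal W}_k,\overline y_k,\overline{y_0^k})_k$ and $(\mathcal W_\infty,y_\infty,\overline y_0)$ defined on it, such that $\mathcal L(\overline{\mathcal W}_k,\overline y_k,\overline{y_0^k})=\mu_{n_k}$ for each $k$ and $(\overline{\mathcal W}_k,\overline y_k,\overline{y_0^k})\to(\mathcal W_\infty,y_\infty,\overline y_0)$ $\overline P$-a.s. in $\mathbf{Y}$. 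These are exactly conclusions (1) and (2) of the lemma.

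The main obstacle is purely topological: verifying that $\mathbf{Z}$ (and hence $\mathbf{Y}$) admits a countable point-separating family of continuous real-valued maps, which is needed to bring $C([0,T];H_{weak})$ into the scope of Jakubowski's framework. Once this is in place, the rest is a direct citation of the theorem applied to the tight sequence of joint laws.
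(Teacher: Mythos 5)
Your proposal is correct and follows essentially the same route as the paper, which likewise obtains the lemma by applying Jakubowski's non-metric Skorokhod representation theorem (\cite[Theorem 2]{Jakubowski97}, see also \cite[Corollary 3.12]{Brez2013}) to the tight family of joint laws from Corollary \ref{comapct-law}. Your explicit verification that $\mathbf{Z}$ (in particular the non-metrizable factor $C([0,T];H_{weak})$) carries a countable point-separating family of continuous functionals is a detail the paper leaves implicit, and it is handled correctly.
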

For the sake of clarity, the expectation with respect to $(\overline{ \Omega}, \overline{\mathcal{F}},\overline	P)$ will be denoted by $\overline\E$.	Now,	let	us	present	some	results	in	order	to	pass	to	the	limit	in	the	stochastic	integral.

\begin{definition}\label{def-filtration}
	For $t\in [0,T]$ and $k\in	\mathbb{N}$, we define $\overline{\mathcal{F}}^{k' }_t$ to be the smallest sub $\sigma$-field of 
	$\overline{\mathcal{F}}$ generated by $\overline{\mathcal{W}}_k(s),	\overline{y}_k(s)$ for $0\leq s\leq t$ and $\overline{y_0^k}$. The right-continuous, $\overline{P}$-augmented filtration of $(\overline{\mathcal{F}}^{k'}_t)_{t\in [0,T]}$, denoted by $(\overline{\mathcal{F}}^{k}_t)_{t\in [0,T]}$ is defined by
	\[\overline{\mathcal{F}}^{k}_t:=\bigcap_{T\geq s>t}\sigma\left[\overline{\mathcal{F}}^{k '}_s\cup \{\mathcal{N}\in\overline{\mathcal{F}} \, : \, \overline{\mathds{P}}(\mathcal{N})=0)\}\right].\]
\end{definition}
Since	$\mathcal{L}(\overline{\mathcal{W}}_k)=\mathcal{L}(\mathcal{W})$,
by	using	the	same	arguments	used	in	\cite[Lemma	2.3]{NYGA},	we	obtain
\begin{lemma}\label{Lemma-Wiener-k}
	$\overline{\mathcal{W}}_k$	is	$Q$-Wiener process with values in the separable Hilbert space $H_0$	where	$Q=\text{diag}(\dfrac{1}{n^2}),	n\in	\mathbb{N}^*$,	and	$Q^{1/2}(H_0)=\mathbb{H}$	with	respect	to	the		filtration	$\overline{\mathcal{F}}^{k}_t$.
\end{lemma}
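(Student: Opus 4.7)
The plan is to verify the three defining properties of a $Q$-Wiener process with respect to the filtration $(\overline{\mathcal{F}}^k_t)$: (i) $\overline{P}$-a.s.\ continuous $H_0$-valued paths starting at $0$; (ii) adaptedness to $(\overline{\mathcal{F}}^k_t)$; and (iii) Gaussian independent increments with $\overline{\mathcal{W}}_k(t)-\overline{\mathcal{W}}_k(s)\sim\mathcal{N}(0,(t-s)Q)$, independent of $\overline{\mathcal{F}}^k_s$. The identification $Q^{1/2}(H_0)=\mathbb{H}$ is then dictated by the explicit choice of $H_0$ in Subsection \ref{Noise-section}, i.e.\ the cylindrical structure is preserved under the law equality.

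First, since $\mathcal{L}(\overline{\mathcal{W}}_k)=\mathcal{L}(\mathcal{W})$ on the Polish space $C([0,T];H_0)$, property (i), the Gaussian marginal laws of the increments, and the covariance operator $Q$ transfer verbatim from $\mathcal{W}$ to $\overline{\mathcal{W}}_k$. Adaptedness to $(\overline{\mathcal{F}}^{k'}_t)$ is by construction of the filtration in Definition \ref{def-filtration}, and hence adaptedness to its right-continuous $\overline{P}$-augmentation $(\overline{\mathcal{F}}^k_t)$ is immediate.

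The heart of the proof is the independence statement in (iii), in which the conditioning $\sigma$-algebra now includes information about $\overline{y}_k$. The strategy is to transport independence from the original probability space. On $(\Omega,\mathcal{F},P)$ the Galerkin process $y_{n_k}$ solving \eqref{system-no-cut} is predictable with respect to the filtration generated by $\mathcal{W}$ and $y_0$, so for every $r\in[0,T]$ the random variable $(\mathcal{W}(u),y_{n_k}(u),P_{n_k}y_0)_{u\le r}$ is measurable with respect to $\sigma(\mathcal{W}(u):u\le r)\vee\sigma(y_0)$; by the Wiener property this $\sigma$-algebra is independent of $\mathcal{W}(t)-\mathcal{W}(s)$ whenever $r\le s<t$. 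Equivalently, the finite-dimensional joint distributions on the original space encode the independence between the increment and the history of $(\mathcal{W},y_{n_k},P_{n_k}y_0)$. Applying Lemma \ref{skorohod-cv}(1), the identity $\mu_{n_k}=\mathcal{L}(\overline{\mathcal{W}}_k,\overline{y}_k,\overline{y_0^k})$ transfers this independence to $(\overline{\Omega},\overline{\mathcal{F}},\overline{P})$: for every $0\le s<t\le T$ the increment $\overline{\mathcal{W}}_k(t)-\overline{\mathcal{W}}_k(s)$ is independent of $\overline{\mathcal{F}}^{k'}_s$. A standard augmentation argument (continuity of paths plus the blanket independence at every rational time) then upgrades this to independence from the right-continuous $\overline{P}$-completion $\overline{\mathcal{F}}^k_s$; I would invoke verbatim the arguments of \cite[Lemma 2.3]{NYGA}, as suggested in the statement.

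The step I expect to be the main obstacle is the careful transport of independence across the change of probability space. The delicate point is that one cannot directly quote a Wiener-type characterization applied to $\overline{\mathcal{W}}_k$ alone, because the filtration involves $\overline{y}_k$; one must instead test independence against a characteristic functional (or a $\pi$-system of cylinder events) built from the joint triple, and then use Lemma \ref{skorohod-cv}(1) to conclude. Once this is achieved, the remaining verifications are purely distributional and are immediate from $\mathcal{L}(\overline{\mathcal{W}}_k)=\mathcal{L}(\mathcal{W})$.
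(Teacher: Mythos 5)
Your proposal is correct and follows essentially the same route as the paper, which disposes of the lemma by invoking the equality of laws $\mathcal{L}(\overline{\mathcal{W}}_k)=\mathcal{L}(\mathcal{W})$ together with the argument of \cite[Lemma 2.3]{NYGA}; that cited argument is precisely the transfer of the independence of increments from the history of the joint triple $(\mathcal{W},y_{n_k},P_{n_k}y_0)$ (available on the original space since $y_{n_k}$ is adapted to the filtration generated by $\mathcal{W}$ and $y_0$) through the Skorokhod representation, followed by the standard upgrade to the right-continuous $\overline{P}$-augmented filtration. You have simply written out the details that the paper delegates to the reference, correctly identifying the $\pi$-system/characteristic-functional test on cylinder events of the joint law as the key step.
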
	
As	a	consequence,	note	that	$\int_0^tG(s,\overline{	y_k}(s))d\overline{\mathcal{W}}_k(s)$	is	well-defined	It\^o	integral.	Now, we want to recover the stochastic integral and our system on the new probability	space.
Thanks	to	the	equality	of	laws,	see	Lemma	\ref{skorohod-cv}$_{(1)}$,		and	by	using	a	similair	arguments	used	in	\cite[Subsection	4.3.4]{Bensoussan95},	we	are	able	to	infer
\begin{lemma}\label{eqn-new}
	For	any	$t\in[0,T]$	and $\overline{P}$-a.s.	in	$\overline\Omega$,	
	for	all  $i=1,\cdots,k$
	\begin{align*}
	(\int_0^{t} G(\cdot,\overline{y_k}))\,d\overline{\mathcal{W}}_k,e_i)&=(\overline{y_k}(t)-\overline{y_0^k},e_i)+\int_0^t(P_nF,e_i)ds\\
	&\qquad- \displaystyle\int_0^t(\nu\Delta \overline{y_k}-\overline{y_k}\cdot \nabla	\overline{y_k}+\alpha\text{div}(A(\overline{y_k})^2)+\beta\text{div}(|A(\overline{y_k})|^2A(\overline{y_k})),e_i)ds. 
	\end{align*}
\end{lemma}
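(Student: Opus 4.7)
The lemma is the standard transfer of the Galerkin equation \eqref{system-no-cut} from the original probability space to the new one furnished by Lemma \ref{skorohod-cv}. My plan is to follow the classical argument of \cite[Subsection 4.3.4]{Bensoussan95}: exploit the equality of joint laws in Lemma \ref{skorohod-cv}(1) to transfer the identity \eqref{system-no-cut} itself, and then identify the residual on the new space with an It\^o integral by approximation through Riemann sums.

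Concretely, fix $i\le n_k$ and introduce on the original space
\begin{align*}
\Phi^{0}_{k,i}(t)&:=(y_{n_k}(t),e_i)-(P_{n_k}y_0,e_i)-\int_0^t(P_{n_k}F,e_i)\,ds\\
&\quad-\int_0^t\bigl(\nu\Delta y_{n_k}-y_{n_k}\cdot\nabla y_{n_k}+\alpha\,\text{div}(A(y_{n_k})^2)+\beta\,\text{div}(|A(y_{n_k})|^2 A(y_{n_k})),\,e_i\bigr)\,ds,
\end{align*}
together with $\Psi^{0}_{k,i}(t):=\sum_{\ell\ge 1}\int_0^t(P_{n_k}\sigma_\ell(\cdot,y_{n_k}),e_i)\,d\beta_\ell(s)$, so that \eqref{system-no-cut} reads $\Phi^{0}_{k,i}(t)=\Psi^{0}_{k,i}(t)$ $P$-a.s. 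The quantities $\Phi_{k,i}$ and $\Psi_{k,i}$ are then defined on $(\overline\Omega,\overline{\mathcal F},\overline P)$ by the substitution $(y_{n_k},P_{n_k}y_0,\mathcal{W})\mapsto(\overline{y_k},\overline{y_0^k},\overline{\mathcal{W}}_k)$; the desired identity is precisely $\Phi_{k,i}(t)=\Psi_{k,i}(t)$ $\overline P$-a.s.

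Since $\Phi_{k,i}$ is a Borel-measurable functional of the path $\overline{y_k}\in\mathbf{Z}$, Lemma \ref{skorohod-cv}(1) immediately yields that $(\overline{\mathcal{W}}_k,\overline{y_k},\Phi_{k,i})$ and $(\mathcal{W},y_{n_k},\Phi^{0}_{k,i})$ have the same law. The genuine obstacle — and the main step to verify — is that $\Phi_{k,i}$ can be identified with $\int_0^\cdot(G(\cdot,\overline{y_k}),e_i)\,d\overline{\mathcal{W}}_k$ even though the stochastic integral is not a pathwise functional of $(\mathcal{W},y_{n_k})$. To bypass this I would approximate $\Psi^{0}_{k,i}(t)$ by the Riemann-type sums
$$\Psi^{0,m}_{k,i}(t)=\sum_{\ell\ge 1}\sum_{j}\bigl(P_{n_k}\sigma_\ell(t^m_j,y_{n_k}(t^m_j)),e_i\bigr)\bigl(\beta_\ell(t^m_{j+1})-\beta_\ell(t^m_j)\bigr)$$
along a sequence of partitions $\{t^m_j\}$ of $[0,t]$ with mesh tending to zero; these sums are genuinely pathwise measurable functionals of $(y_{n_k},\mathcal{W})$. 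Equality of joint laws then transfers the deterministic equality $\Phi^{0}_{k,i}-\Psi^{0,m}_{k,i}=\Psi^{0}_{k,i}-\Psi^{0,m}_{k,i}$ to its counterpart $\Phi_{k,i}-\Psi^{m}_{k,i}$ on the new space.

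It remains to pass to the limit $m\to\infty$ on both sides. On the original side, the Lipschitz bound \eqref{noise1} combined with $y_{n_k}\in L^2(\Omega;C([0,T];H))$ from Lemma \ref{lemma-tight}(2) yields $L^2$-convergence $\Psi^{0,m}_{k,i}\to\Psi^{0}_{k,i}$. On the new side, Lemma \ref{Lemma-Wiener-k} ensures that $\overline{\mathcal{W}}_k$ is a $Q$-Wiener process with respect to the filtration $(\overline{\mathcal{F}}^{k}_t)$ and, since Definition \ref{def-filtration} makes $\overline{y_k}$ adapted to $(\overline{\mathcal{F}}^{k}_t)$, the integral $\int_0^t(G(\cdot,\overline{y_k}),e_i)\,d\overline{\mathcal{W}}_k$ is a well-defined It\^o integral to which the same Riemann-sum scheme converges in $L^2$. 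This identification of the two limits — the delicate point of the argument — yields $\Phi_{k,i}(t)=\Psi_{k,i}(t)$ $\overline P$-a.s. simultaneously for all rational $t\in[0,T]$, and the continuity of both sides (after testing against $e_i$) extends the identity to every $t\in[0,T]$.
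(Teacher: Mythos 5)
Your argument is correct and follows essentially the strategy the paper itself invokes, namely transferring the Galerkin identity \eqref{system-no-cut} through the equality of joint laws in Lemma \ref{skorohod-cv}(1) and then identifying the residual with an It\^o integral on the new space; the paper simply delegates this to \cite[Subsection 4.3.4]{Bensoussan95}. The one substantive difference is \emph{how} the identification is done. Bensoussan's argument (the one cited) is the martingale method: one shows that your $\Phi_{k,i}$ is a square-integrable $(\overline{\mathcal F}^{k}_t)$-martingale whose quadratic variation and cross-variation with $\overline{\mathcal{W}}_k$ are determined by the common law (being $L^1$-limits of sums of products of increments, which are pathwise functionals), and then expands $\overline{\E}\,\vert\Phi_{k,i}(t)-\int_0^t(G(\cdot,\overline{y_k}),e_i)\,d\overline{\mathcal{W}}_k\vert^2$ using these brackets to find it vanishes. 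You instead approximate the integral by pathwise-measurable Riemann sums and push the $L^2$ errors across via the equality of laws; this avoids computing brackets but hinges on the step-process approximation of the integrand converging in $L^2(\Omega\times[0,T])$. Note one technical caveat there: the $\sigma_\k$ are only Carath\'eodory in $t$ (measurable, not continuous), so left-endpoint evaluation $\sigma_\k(t^m_j,y_{n_k}(t^m_j))$ need not yield a convergent approximation of $s\mapsto\sigma_\k(s,y_{n_k}(s))$ along an arbitrary partition sequence. This is repaired either by replacing point evaluation in time with Steklov averages over the preceding subinterval (still a pathwise functional of $(y_{n_k},\mathcal{W})$), or by observing that the relevant discrepancy $\E\int_0^T\sum_{\k}\Vert\sigma_\k(s,y_{n_k}(s))-\sigma_\k^{(m)}(s)\Vert_2^2\,ds$ is itself a law-determined quantity, so a partition sequence chosen to work on the original space works verbatim on the new one. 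With that standard adjustment, and the Lipschitz bound \eqref{noise1} to control the tail of the sum over $\k$, your proof is complete and equivalent in substance to the paper's.
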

Let	$(\overline{\mathcal{F}}^{\infty}_t)_{t\in [0,T]}$	be	the	$\overline{P}$-augmented filtration of	$\sigma(\mathcal{W}_\infty(s),	y_\infty(s),	\overline{y_0};0\leq s\leq t)$. 
\begin{lemma}\label{Wienr-limit}
	$\overline{\mathcal{W}}_k$	converges	to	$\mathcal{W}_\infty$	in	$L^2(\overline{\Omega},C([0,T]; H_0))$	and
	$\mathcal{W}_\infty=(\mathcal{W}_\infty(t))_{t\in [0,T]}$ is a $H_0$-valued, square integrable $(\overline{\mathcal{F}}^{\infty}_t)_{t\in [0,T]}$-martingale with quadratic variation process $tQ$ for any $t\in [0,T]$. 
\end{lemma}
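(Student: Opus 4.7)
The plan is to split the statement into two independent parts: (i) upgrading the $\overline{P}$-a.s.\ convergence of $\overline{\mathcal{W}}_k$ delivered by Lemma \ref{skorohod-cv}$_{(2)}$ into convergence in $L^2(\overline{\Omega}; C([0,T]; H_0))$; and (ii) identifying $\mathcal{W}_\infty$ as an $(\overline{\mathcal{F}}^\infty_t)$-square-integrable martingale with quadratic variation $tQ$.

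For (i), my approach is Vitali's convergence theorem. Since the $\overline{P}$-a.s.\ convergence in $C([0,T]; H_0)$ is already in hand, only uniform integrability of the family $(\Vert \overline{\mathcal{W}}_k\Vert_{C([0,T]; H_0)}^2)_k$ is required. By Lemma \ref{Lemma-Wiener-k}, $\mathcal{L}(\overline{\mathcal{W}}_k) = \mathcal{L}(\mathcal{W})$, so all these norms are equidistributed with $\Vert \mathcal{W}\Vert_{C([0,T]; H_0)}$. Fernique's theorem applied to the Gaussian law of $\mathcal{W}$ on $C([0,T]; H_0)$ yields $\sup_k \overline{\E} \Vert \overline{\mathcal{W}}_k\Vert_{C([0,T]; H_0)}^p < \infty$ for every $p \geq 1$; choosing any $p > 2$ gives the required uniform integrability and hence the $L^2$ convergence.

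For (ii), I would pass to the limit in the martingale identities satisfied by $\overline{\mathcal{W}}_k$ with respect to $(\overline{\mathcal{F}}^k_t)$, which are guaranteed by Lemma \ref{Lemma-Wiener-k}. Fix $h, g \in H_0$, $0 \leq s \leq t \leq T$, and a bounded continuous function $\Phi$ on $C([0,s]; H_0) \times C([0,s]; U^\prime) \times H$, and set $\Phi_k^{(s)} := \Phi(\overline{\mathcal{W}}_k\vert_{[0,s]}, \overline{y_k}\vert_{[0,s]}, \overline{y_0^k})$. Then $\overline{\E}[(\overline{\mathcal{W}}_k(t) - \overline{\mathcal{W}}_k(s), h)_{H_0}\,\Phi_k^{(s)}] = 0$ and $\overline{\E}[\{(\overline{\mathcal{W}}_k(t), h)_{H_0}(\overline{\mathcal{W}}_k(t), g)_{H_0} - (\overline{\mathcal{W}}_k(s), h)_{H_0}(\overline{\mathcal{W}}_k(s), g)_{H_0} - (t-s)(Qh, g)_{H_0}\}\Phi_k^{(s)}] = 0$. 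The $L^2$-convergence obtained in (i), the $\overline{P}$-a.s.\ convergence of $(\overline{\mathcal{W}}_k, \overline{y_k}, \overline{y_0^k})$ in $\mathbf{Y}$, the continuity and boundedness of $\Phi$, together with the uniform moment bounds furnished by Fernique and Lemma \ref{lemma-tight}, allow passing to the limit in both identities. A standard monotone class argument then extends them from continuous $\Phi$ to indicators of sets in $\overline{\mathcal{F}}^\infty_s$, yielding simultaneously the $(\overline{\mathcal{F}}^\infty_t)$-martingale property of $\mathcal{W}_\infty$ and the identification of its quadratic variation process as $tQ$.

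The main technical obstacle I anticipate is ensuring adaptedness and the correct independence structure with respect to the \emph{enlarged} filtration $(\overline{\mathcal{F}}^\infty_t)$: the increment $\mathcal{W}_\infty(t) - \mathcal{W}_\infty(s)$ must be orthogonal in the appropriate $L^2$-martingale sense not only to the past of $\mathcal{W}_\infty$ itself, but also to that of $y_\infty$ and $\overline{y_0}$. This forces the test function $\Phi$ to depend on the full triplet restricted to $[0,s]$, and it is precisely the \emph{joint} Skorokhod convergence in $\mathbf{Y}$ delivered by Lemma \ref{skorohod-cv}, together with the uniform Gaussian moments from Fernique, that makes this passage to the limit viable.
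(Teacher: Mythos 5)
Your proposal is correct and follows essentially the same route as the paper: the $L^2(\overline{\Omega};C([0,T];H_0))$ convergence is obtained from the a.s.\ convergence of Lemma \ref{skorohod-cv} plus Vitali's theorem with a uniform $p$-th moment bound ($p>2$) coming from the equality of laws (the paper gets this bound via the Burkholder--Davis--Gundy inequality rather than Fernique, an immaterial difference), and the martingale/quadratic-variation identification is the standard passage to the limit in martingale identities tested against continuous functionals of the past of the whole triplet, which the paper simply delegates to \cite[Subsection 2.4]{Vallet-Zimm1}. Your write-up fills in exactly the argument the paper cites, so no gap.
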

\begin{proof}
	Let	$p>2$,	note	that	$$\displaystyle\overline{\E}\sup_{s\in [0,T]}\Vert	\overline{\mathcal{W}}_k(s)\Vert_{H_0}^p=\displaystyle	\E\sup_{s\in [0,T]}\Vert	\mathcal{W}(s)\Vert_{H_0}^p\leq	C(T\sum_{ n= 1}^\infty\dfrac{1}{n^2})^{p/2},$$
	where	$C>0$	 is independent	of	$k$	from	BDG	inequality.	Thus,	Vitali's theorem	and	Lemma	\ref{skorohod-cv}$_{(2)}$	ensures	the	convergence	in	$L^2(\overline{\Omega},C([0,T]; H_0))$.	The	rest	of	the	lemma	is	a	consequence	of	Lemma	\ref{skorohod-cv},	we	refer	\textit{e.g.}	to	\cite[Subsection	2.4	]{Vallet-Zimm1}	for	detailed	and	similair	arguments.
\end{proof}

We	recall	that	$y_n\in	C([0,T];H_n)$	P-a.s.,	since	$y_{n_k}$	and	$\overline{	y_k}$	have	the	same	laws,	and	$C([0,T];H_{n_k})$	is	a	Borel	subset	of	$C([0,T];U^\prime)\cap	C([0,T];H_{weak})\cap	L^2(0,T;H),$	one	has
\begin{align}\label{lawyk}
\mathcal{L}(\overline{y_k})[C([0,T];H_k)]=1.
\end{align}
Similarly	to	Lemma	\ref{lemma-tight}	and	by	using	the	equality	in	laws,
we	are	able	to	infer	the	following.

\begin{lemma}\label{approx-martingale}	
	Let	$T>0$,	there	exists	a	unique	predictable	solution	$\overline{	y_k}	\in	C([0,T];H_k)$	such	that
	\begin{align}\label{eqn-martingale-appr}
	\overline{	y_k}(t)=\overline{y_0^k}&+	 \displaystyle\int_0^t(P_kF+\nu	P_k\Delta \overline{	y_k}-P_k\overline{	y_k}\cdot \nabla	\overline{	y_k}+\alpha	P_k\text{div}(A(\overline{	y_k})^2)+\beta	P_k	\text{div}(|A(\overline{	y_k})|^2A(\overline{	y_k})))ds\nonumber\\
	&\quad+\int_0^{t} P_kG(\cdot,\overline{	y_k}))\,d\overline{\mathcal{W}}_k,\quad	\forall	t\in[0,T], 
	\end{align}
	with	respect	to	new	stochastic	basis	$(\overline{\Omega},\overline{\mathcal{F}},\overline{P};(\overline{\mathcal{F}}^{k}_t)_{t\in [0,T]})$.	Moreover,
	there	exists	$K>0$	independent	of	$k$	such	that
	\begin{enumerate}
		\item	$(\overline{y_k})_k$	is	bounded	by	$K$	in	$L^2(\overline\Omega\times	(0,T);V)\cap	L^4(\overline\Omega\times(0,T);(W^{1,4}(D))^d)$.
		\item	$(\overline{y_k})_k$	is	bounded	by	$K$	in	$L^2(\overline\Omega;C([0,T];H))$.
		\item	$(\overline{y_k}-\int_0^\cdot	P_k	G(\cdot,\overline{y_k})d\overline{\mathcal{W}}_k)_k$	is	bounded	by	$K$	in	$L^2(\overline\Omega\times	(0,T);(L^2(D))^d)$	\\
		and		$\partial_t(\overline{y_k}-\int_0^\cdot	P_k	G(\cdot,\overline{y_k})d\overline{\mathcal{W}}_k)_k$	is	bounded	by	$K$	in	$L^{4/3}(\overline\Omega;L^{4/3}(0,T;X^\prime))$.
		\item	$(S(\overline{y_k}))_k$	is	bounded	by	$K>0$	in	$L^{4/3}(\overline\Omega;L^{4/3}(0,T;X^\prime))$.
		
	\end{enumerate}
\end{lemma}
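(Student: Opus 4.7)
The plan is to transfer the equation and the a priori estimates already established for the Galerkin approximants $y_{n_k}$ on $(\Omega,\mathcal{F},P)$ to the new stochastic basis $(\overline\Omega,\overline{\mathcal{F}},\overline{P};(\overline{\mathcal{F}}^{k}_t)_t)$, by combining the equality of laws provided by Lemma \ref{skorohod-cv}$_{(1)}$ with the representation obtained in Lemma \ref{eqn-new}, and then to establish uniqueness by exploiting that \eqref{eqn-martingale-appr} is a finite-dimensional SDE in $H_k$ with locally Lipschitz drift.

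First, I would derive the integral equation \eqref{eqn-martingale-appr}. Since $\mathcal{L}(\overline{y_k})[C([0,T];H_k)]=1$ by \eqref{lawyk}, the process $\overline{y_k}$ takes values in the finite-dimensional space $H_k$, and hence coincides with the sum $\sum_{i=1}^k (\overline{y_k},e_i)e_i$. Lemma \ref{eqn-new} supplies the identity tested against each $e_i$, for $i=1,\dots,k$. Multiplying the $i$-th identity by $e_i$ and summing over $i$ gives the projected equation \eqref{eqn-martingale-appr}, where Lemma \ref{Lemma-Wiener-k} ensures that the stochastic integral $\int_0^\cdot P_k G(\cdot,\overline{y_k})\,d\overline{\mathcal{W}}_k$ is well defined with respect to the filtration $(\overline{\mathcal{F}}^{k}_t)_t$. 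The predictability of $\overline{y_k}$ with respect to this filtration follows from its adaptedness and path continuity in $H_k$.

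Next, I would transfer the four bounds in the conclusion directly from Lemma \ref{lemma-tight}. Each functional involved on the left-hand sides, namely the $L^2_t V$-norm, the $L^4_t W^{1,4}$-norm, the $L^\infty_t H$-norm, and the $X'$-norm of $S(\cdot)$ composed with the path, is a Borel-measurable functional on $\mathbf{Z}$ (or on the Borel subset $C([0,T];H_k)\subset \mathbf{Z}$), so the equality of laws $\mathcal{L}(\overline{y_k})=\mathcal{L}(y_{n_k})$ on $\mathbf{Z}$ gives item (1), (2), and (4) with the same constants $K$ as in Lemma \ref{lemma-tight}. For item (3), the bound on the difference $\overline{y_k}-\int_0^\cdot P_k G(\cdot,\overline{y_k})d\overline{\mathcal{W}}_k$ follows by rewriting it, using \eqref{eqn-martingale-appr}, as the time integral of the drift terms; the corresponding $L^{4/3}_t X'$-estimate for its time derivative is a copy of the argument in Lemma \ref{lemma-boun-dual}, each term being controlled pathwise by functionals of $\overline{y_k}$ whose expectations coincide with those computed for $y_{n_k}$.

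Finally, for uniqueness, note that in the finite-dimensional space $H_k$ the projected drift $v\mapsto P_k(\nu\Delta v-(v\cdot\nabla)v+\alpha\,\mathrm{div}(A(v)^2)+\beta\,\mathrm{div}(|A(v)|^2 A(v)))$ is a polynomial, hence locally Lipschitz, and $v\mapsto P_k G(\cdot,v)$ is globally Lipschitz by \eqref{noise1}. Given two solutions $\overline{y_k}^{(1)}$ and $\overline{y_k}^{(2)}$ of \eqref{eqn-martingale-appr} with the same initial datum $\overline{y_0^k}$, I would introduce the stopping times $\tau_R=\inf\{t\ge 0:\|\overline{y_k}^{(1)}\|_{H}+\|\overline{y_k}^{(2)}\|_{H}\ge R\}\wedge T$, apply It\^o's formula to $\|\overline{y_k}^{(1)}(t\wedge\tau_R)-\overline{y_k}^{(2)}(t\wedge\tau_R)\|_2^2$, use Corollary \ref{Monotone-S-ope} for the third-grade terms together with the standard estimate for the convective term in $H_k$, and conclude by Gronwall's inequality that the two processes coincide on $[0,\tau_R]$; letting $R\to\infty$, using the a priori bound (2) to guarantee $\tau_R\to T$ $\overline P$-a.s., yields pathwise uniqueness.

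The main obstacle I foresee is the transfer of item (3), more precisely the identification, on the new basis, of the stochastic integral $\int_0^\cdot P_k G(\cdot,\overline{y_k})d\overline{\mathcal{W}}_k$ with the one appearing in \eqref{eqn-martingale-appr}: the stochastic integrals are constructed on different probability spaces and the equality of laws does not by itself grant their pathwise coincidence. However, Lemma \ref{eqn-new} gives exactly this identification testwise, and Lemma \ref{Lemma-Wiener-k} together with the integrability of $P_k G(\cdot,\overline{y_k})$ inherited from (1) and the Lipschitz bound \eqref{noise1} close this gap in a standard way.
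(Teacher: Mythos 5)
Your proposal is correct and follows essentially the same route as the paper, which simply invokes the equality of laws together with Lemma \ref{eqn-new} to transfer both the equation and the bounds of Lemma \ref{lemma-tight} to the new stochastic basis. You additionally spell out the pathwise uniqueness argument (local Lipschitz drift in $H_k$, stopping times, It\^o and Gronwall) that the paper leaves implicit; this is a sound and welcome completion of the same strategy.
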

\begin{remark}
	Thanks	to	the	uniqueness	of	the	solution,	$(\overline{\mathcal{F}}^{k}_t)_{t\in [0,T]}$	can	be	chosen	independently	of	$\overline{y_k}$	(see	\textit{e.g.}	\cite[Lemma	2.6]{NYGA}).
\end{remark}
\subsection{Proof	of	Theorem	\ref{exis-thm-mart}}

We	will	prove		Theorem	\ref{exis-thm-mart}	in	two	steps.
\subsubsection{Step	1:}
Thanks	to	Subsection	\ref{sub-extraction},	we	obtain\begin{lemma}\label{lemma-cv-limit}
	There	exist	$\xi	\in	L^{4/3}(\overline\Omega;L^{4/3}(0,T;(W^{-1,4/3}(D))^d))$	and	
	$y_\infty\in L^2(\Omega\times	(0,T);V)\cap	L^4(\Omega\times(0,T);X)\cap		L^2_w(\Omega;L^\infty(0,T;H))$,		a	$(\overline{\mathcal{F}}^{\infty}_t)_{t\in [0,T]}$-predictable process	such	that	
	the	following	convergences	hold	(up	to	subsequence	denoted	by	the	same	way),	as	$k\to	\infty$:
	\begin{align}
	\hspace*{-4cm}	&\overline{y_k}  \text{ converges strongly to  } y_\infty \text{ in } L^2(\overline\Omega;L^2(0,T;H)) 
	\label{cv2}\\ 
	&\overline{y_k}  \text{ converges weakly to  } y_\infty \text{ in } L^2(\overline\Omega;L^2(0,T;V))\cap	L^4(\overline\Omega\times(0,T);X);\label{cv1}\\
	&S(\overline{y_k})\text{ converges weakly to  }	\xi\text{ in }	L^{4/3}(\overline\Omega;L^{4/3}(0,T;X^\prime));\label{cv-mono-oper}\\
	&\overline{y_0^k} \text{ converges  to  } \bar y_0 \text{ in }  L^2(\overline\Omega;H)  \label{initial-cv}; \\
	&\overline{y_k}  \text{ converges weakly-* to  } y_\infty \text{ in } L^2_{w-*}(\overline\Omega;L^\infty(0,T;H)) \label{cv-*},
	\end{align}
	where	$L^2_{w-*}(\overline\Omega;L^\infty(0,T;H))$	denotes	the	space	$$\{ u:\overline\Omega\to	L^\infty(0,T;H)		\text{	is	 weakly-* measurable		and	}	\overline\E\Vert	u\Vert_{L^\infty(0,T;H)}^2<\infty\}.$$
\end{lemma}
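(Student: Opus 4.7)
The plan is to combine the uniform $k$-independent bounds from Lemma \ref{approx-martingale} with the $\overline P$-a.s. convergence of $\overline{y_k}$ in $\mathbf{Z}$ given by Lemma \ref{skorohod-cv}, extracting weak and weak-$*$ subsequential limits and then identifying them with the same process $y_\infty$.

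First, I would argue the existence of the limits. The spaces $L^2(\overline\Omega\times(0,T);V)$ and $L^4(\overline\Omega\times(0,T);X)$ are reflexive, so by Banach--Alaoglu and the bounds in Lemma \ref{approx-martingale}(1) one extracts a (not yet labelled) subsequence such that $\overline{y_k}$ converges weakly in both spaces to some limit $\tilde y$. Likewise, $S(\overline{y_k})$ is bounded in the reflexive space $L^{4/3}(\overline\Omega;L^{4/3}(0,T;X'))$ by Lemma \ref{approx-martingale}(4), which yields a weak limit $\xi$. For the $L^\infty$-bound, I would invoke that $L^2_{w-*}(\overline\Omega;L^\infty(0,T;H))$ is the dual of the separable Banach space $L^2(\overline\Omega;L^1(0,T;H))$ (since $H$ is Hilbert and hence $L^1(0,T;H)^*=L^\infty(0,T;H)$); the uniform bound from Lemma \ref{approx-martingale}(2) then gives, via weak-$*$ Banach--Alaoglu, a weak-$*$ limit $\hat y$ along a further subsequence.

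Next, I would identify all these limits with the $\overline P$-a.s. limit $y_\infty$ provided by Skorokhod. The convergence $\overline{y_k}\to y_\infty$ in $\mathbf{Z}$ holds $\overline P$-a.s., and $\mathbf{Z}\hookrightarrow L^2(0,T;H)$, so in particular $\overline{y_k}\to y_\infty$ in $L^2(0,T;H)$ $\overline P$-a.s. The bound $\overline\E\Vert\overline{y_k}\Vert_{L^\infty(0,T;H)}^2\le K$ yields $\overline\E\Vert\overline{y_k}\Vert_{L^2(0,T;H)}^{2+\delta}\le C$ for some $\delta>0$ (take, e.g., power $4$ via interpolation with the bound from item (1)), so the sequence is uniformly integrable in $L^2(\overline\Omega;L^2(0,T;H))$ and Vitali's theorem delivers \eqref{cv2}. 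Since strong $L^2$-convergence implies weak convergence and weak limits are unique, $\tilde y=y_\infty$; an analogous argument (testing against smooth test functions and using $\overline P$-a.s. convergence in the weak topology of $H$ at a.e. $t$) identifies $\hat y=y_\infty$. This simultaneously gives \eqref{cv1} and \eqref{cv-*}. The convergence \eqref{initial-cv} follows from Lemma \ref{skorohod-cv}(2), which gives $\overline{y_0^k}\to\bar y_0$ $\overline P$-a.s. in $H$, together with uniform $L^q$-integrability $(q>2)$ inherited from $y_0$.

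Finally, I would address predictability. Each $\overline{y_k}$ is $(\overline{\mathcal{F}}^k_t)$-adapted and has continuous paths in $X'$, hence is predictable. The $\overline P$-a.s. limit $y_\infty$ is therefore $(\overline{\mathcal{F}}^\infty_t)$-adapted, and since it lies in $L^2(0,T;H)$ and is a pointwise a.s. limit in $\mathbf{Z}$, one concludes it is predictable (by taking a predictable representative, e.g. via the standard argument using mollified or piecewise-constant-in-time approximations). The element $\xi$ has no a priori identification as a specific nonlinear functional of $y_\infty$ here; the lemma only asserts its existence and weak convergence, so nothing further is required at this stage. The main technical point to be careful about is the identification step for \eqref{cv-*}: one must justify that the weak-$*$ limit in $L^2_{w-*}(\overline\Omega;L^\infty(0,T;H))$ coincides with the $\overline P$-a.s. limit in $\mathbf{Z}$, which I would do by pairing against test elements of the form $\mathbf 1_A\otimes h(t)\phi$ with $A\in\overline{\mathcal{F}}$, $h\in L^1(0,T)$ and $\phi\in H$, and applying dominated convergence using the uniform $L^\infty$-bound together with the a.s. convergence in $C([0,T];H_{\rm weak})$.
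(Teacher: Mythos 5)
Your proposal is correct and follows essentially the same route as the paper: strong convergence in $L^2(\overline\Omega;L^2(0,T;H))$ via the $\overline P$-a.s.\ Skorokhod convergence combined with Vitali's theorem (uniform integrability from the $L^4(\overline\Omega\times(0,T);X)$ bound), weak and weak-$*$ subsequential limits from the uniform bounds via Banach--Alaoglu, identification of all limits with $y_\infty$ by uniqueness of weak limits, the same Vitali argument for $\overline{y_0^k}$ using the $L^q$-moment with $q>2$, and predictability from adaptedness plus weak continuity of the paths in $H$. The extra detail you give on the duality $\left(L^2(\overline\Omega;L^1(0,T;H))\right)'$ and on testing the weak-$*$ limit against $\mathbf 1_A\otimes h(t)\phi$ merely makes explicit what the paper leaves implicit.
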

\begin{proof}
	From Lemma \ref{skorohod-cv}, we know that
	$$ \overline{ y_k}  \text{ converges strongly to  } y_\infty \text{ in } L^2(0,T;H)\quad  \overline{P}\text{-a.s. in  } \overline\Omega. $$
	Then the	Vitali’s theorem yields  	\eqref{cv2}, since $(\overline{y_k})_k$	is	bounded	in	$L^4(\overline\Omega\times(0,T);X)$. 
	\\

	By the compactness of the closed balls  
	in the space $L^2(\overline\Omega;L^2(0,T;V))\cap	L^4(\overline\Omega\times(0,T);(W^{1,4}(D))^d)$	and	$L^2_{w-*}(\overline\Omega;L^\infty(0,T;H))$
	with respect to the weak 	and	weak-*	topologies,	respectively, there exists $$\Xi \in L^2(\overline\Omega;L^2(0,T;V))\cap	L^4(\overline\Omega\times(0,T);X)\cap	L^2_{w-*}(\overline\Omega;L^\infty(0,T;H))	$$ such that $\overline{y_k} \rightharpoonup \Xi$	in	$L^2(\overline\Omega;L^2(0,T;V))\cap	L^4(\overline\Omega\times(0,T);X)$	and	$\overline{ y_k} \rightharpoonup_* \Xi$	in	$L^2_{w-*}(\overline\Omega;L^\infty(0,T;H))$, then  the uniqueness of the limit gives $\Xi=y_\infty$. 
	A similar argument yields 	the	existence	of	$\xi	\in	L^{4/3}(\overline\Omega;L^{4/3}(0,T;X^\prime))$	such	that	\eqref{cv-mono-oper}	holds.
	\\	
	
	Concerning	the	$(\overline{\mathcal{F}}^{\infty}_t)_{t\in [0,T]}$-predictability	of		$y_\infty$,	it is 	clear	that	$y_\infty$	is		$(\overline{\mathcal{F}}^{\infty}_t)_{t\in [0,T]}$-adpated.	Since	$y_\infty\in	C([0,T];H_{weak})$	$\overline{P}$-a.s.,	see	\eqref{space-tightness-sol}.	Then,	the		$(\overline{\mathcal{F}}^{\infty}_t)_{t\in [0,T]}$-predictability	of	$y_\infty$	follows.\\
	
	Thanks	to	the	equality	of	laws,	one	has	$$\displaystyle\sup_{k\in\mathbb{N}}\overline{\E}\Vert	\overline{y_0^k}\Vert_H^r=\sup_{k\in\mathbb{N}}\E\Vert	P_ky_0\Vert_H^r\leq\E\Vert	y_0\Vert_H^r.$$	On	the	other	hand,
	$\overline{y_0^k}$	converges	to	$\bar{y}_0$	in	$H$	$\overline P$-a.s. in  $ \overline \Omega$,	consequently,
	Vitali's	theorem	ensures	that	$\overline{y_0^k}$  converges  to $\bar y_0$	in $L^p(\overline\Omega;H),	1\leq	p<r$.	Moreover,	we	have
	$\mathcal{L}(\bar{y}_0)=\mathcal{L}(y_0).$	
\end{proof}
\begin{lemma}\label{Lemma-cv-*}
	For	any	$t\in[0,T]$,
	the	following	convergences	hold.
	\begin{align}
	\int_0^t	P_kG(\cdot,\overline{y_k})d\overline{	\mathcal{W}}_k&\to	\int_0^t	G(\cdot,y_\infty)d\mathcal{W}_\infty	\text{ in } L^2(\overline\Omega;L^2(0,T;(L^2(D))^d));\label{cv-stoch-integ}\\
	\quad	\overline{	y_k}(t)	&\text{ converges weakly	 to  }y_\infty(t)\text{ in } L^2(\overline\Omega;H)	\text{	and	}	\bar y_0= y_\infty(0).\label{cont-time}
	\end{align}
\end{lemma}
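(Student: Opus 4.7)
The plan is to handle the two claims of Lemma \ref{Lemma-cv-*} separately, with \eqref{cv-stoch-integ} being the main difficulty.

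For the stochastic integral, I would first establish strong convergence of the integrands, namely that $P_kG(\cdot,\overline{y_k})\to G(\cdot,y_\infty)$ in $L^2(\overline\Omega\times(0,T);L_2(\mathbb{H},(L^2(D))^d))$, where $L_2$ denotes the Hilbert--Schmidt operators. The Lipschitz hypothesis \eqref{noise1} gives
\begin{equation*}
\sum_{\k\ge 1}\|\sigma_\k(\cdot,\overline{y_k})-\sigma_\k(\cdot,y_\infty)\|_2^2 \;\le\; L\,\|\overline{y_k}-y_\infty\|_2^2,
\end{equation*}
so \eqref{cv2} yields $G(\cdot,\overline{y_k})\to G(\cdot,y_\infty)$ in the target space. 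Since $P_k$ is a contraction on $(L^2(D))^d$ with $P_kh\to h$ pointwise, dominated convergence gives $P_kG(\cdot,y_\infty)\to G(\cdot,y_\infty)$, and combining the two yields the claim. Next, recalling from Lemma \ref{Wienr-limit} that $\overline{\mathcal{W}}_k\to\mathcal{W}_\infty$ in $L^2(\overline\Omega;C([0,T];H_0))$, and that $P_kG(\cdot,\overline{y_k})$ is $(\overline{\mathcal{F}}^k_t)_t$-predictable by Definition \ref{def-filtration} and Lemma \ref{Lemma-Wiener-k}, I would invoke a classical passage-to-the-limit result for It\^o integrals against approximating Wiener processes on varying filtrations. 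The key identification can be made by expanding the stochastic integrals along the basis $\{f_\k\}$ of $\mathbb{H}$, applying the It\^o isometry termwise, and using the strong $L^2$-convergences above.

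For the weak pointwise convergence \eqref{cont-time}, the $\overline{P}$-a.s. convergence $\overline{y_k}\to y_\infty$ in $\mathbf{Z}\subset C([0,T];H_{weak})$ gives, for each fixed $t$, that $\overline{y_k}(t)\rightharpoonup y_\infty(t)$ in $H$, $\overline{P}$-almost surely. Coupled with the uniform bound $\sup_k\overline\E\|\overline{y_k}(t)\|_H^2<\infty$ from Lemma \ref{approx-martingale}(2), a Vitali argument applied to $(\overline{y_k}(t),\psi)$ for $\psi\in L^\infty(\overline\Omega;H)$ delivers convergence in $L^1(\overline\Omega)$; a density argument combined with the uniform $L^2$-bound then upgrades this to weak convergence in $L^2(\overline\Omega;H)$. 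The identity $\bar y_0=y_\infty(0)$ follows by taking $t=0$: we have $\overline{y_k}(0)=\overline{y_0^k}\to\bar y_0$ in $L^2(\overline\Omega;H)$ by \eqref{initial-cv}, and the uniqueness of weak limits forces $y_\infty(0)=\bar y_0$.

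The principal obstacle is the stochastic integral convergence, since the Wiener processes and their natural filtrations both depend on $k$. The robustness comes from two ingredients: the Lipschitz structure of $G$ together with the strong convergence \eqref{cv2} of the integrand, and the $L^2$-convergence of $\overline{\mathcal{W}}_k$ from Lemma \ref{Wienr-limit}. Once strong convergence of integrands is available, matching quadratic variations via the basis expansion renders the limit identification routine, so the bulk of the work is the integrand analysis outlined above.
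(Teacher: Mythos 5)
Your overall strategy coincides with the paper's: both proofs first establish the strong convergence $P_kG(\cdot,\overline{y_k})\to G(\cdot,y_\infty)$ in $L^2(\overline\Omega\times(0,T);L_2(\mathbb{H},(L^2(D))^d))$ by exactly the splitting you describe (Lipschitz bound \eqref{noise1} plus \eqref{cv2} for the first piece, properties of $P_k$ for the second), and both then appeal to a passage-to-the-limit result for stochastic integrals driven by varying Wiener processes (the paper cites \cite[Lemma 2.1]{Debussche11}). Your treatment of \eqref{cont-time} via $C([0,T];H_{weak})$ and a Vitali/density argument is a harmless variant of the paper's route through $C([0,T];U^\prime)$, and the identification $\bar y_0=y_\infty(0)$ at $t=0$ via \eqref{initial-cv} is the same.

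There is, however, one step where your argument is too optimistic. You suggest that the identification and the $L^2(\overline\Omega;\cdot)$ convergence in \eqref{cv-stoch-integ} follow by ``applying the It\^o isometry termwise'' together with the strong convergence of the integrands. The isometry cannot be applied directly to the difference
\begin{equation*}
\int_0^t P_kG(\cdot,\overline{y_k})\,d\overline{\mathcal{W}}_k-\int_0^t G(\cdot,y_\infty)\,d\mathcal{W}_\infty,
\end{equation*}
because the two integrals are taken against different Wiener processes adapted to different filtrations; $\overline{\mathcal{W}}_k-\mathcal{W}_\infty$ is not a martingale for a common filtration, so the cross term is not controlled by the isometry. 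This is precisely why the classical lemma you invoke only delivers convergence \emph{in probability} in $L^2(0,T;(L^2(D))^d)$. To upgrade this to convergence in $L^2(\overline\Omega;L^2(0,T;(L^2(D))^d))$ one needs a uniform integrability argument, which the paper supplies by bounding the fourth moments
$\overline{\E}\,\bigl|\int_0^t P_kG(\cdot,\overline{y_k})\,d\overline{\mathcal{W}}_k\bigr|^4$
via Burkholder--Davis--Gundy, \eqref{noise1} and the uniform bound of $(\overline{y_k})_k$ in $L^4(\overline\Omega\times(0,T);X)$, followed by Vitali's theorem. Your proposal omits this uniform integrability step; without it the mode of convergence stated in \eqref{cv-stoch-integ} is not reached. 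The rest of your argument is sound.
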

\begin{proof}
	First,	note	that
	\begin{align*}
	&\overline\E\int_0^T\Vert	P_kG(s,\overline{y_k}(s))-G(s,y_\infty(s))\Vert_{L_2(\mathbb{H},(L^2(D))^d)}^2ds\\
	&\leq	2\overline\E\int_0^T\Vert	G(s,\overline{y_k}(s))-G(s,y_\infty(s))\Vert_{L_2(\mathbb{H},(L^2(D))^d)}^2ds+2\overline\E\int_0^T\Vert	(P_k-Id)G(s,y_\infty(s))\Vert_{L_2(\mathbb{H},(L^2(D))^d)}^2ds\\
	&\leq	2L\overline\E	\int_0^T\Vert	\overline{y_k}-y_\infty	\Vert_2^2ds+2L\Vert	P_k-Id\Vert_{L(H)}^2\overline\E	\int_0^T\Vert	y_\infty\Vert_2^2ds	\to	0	\text{	as	}	n\to\infty,
	\end{align*}
	by	using	\eqref{cv2}	and	the	properties	of	the	projection	operator	$P_n$.	From	Lemma	\ref{Wienr-limit},	we	have
	$\overline{\mathcal{W}}_k$	converges	to	$\mathcal{W}_\infty$	in	$L^2(\overline{\Omega},\mathcal{C}([0,T], H_0))$.	In	addition,	$G(\cdot,y_\infty)	\in	L^2(0,T;L_2(\mathbb{H},(L^2(D))^d))$	is	$\overline{\mathcal{F}}^\infty_t$-predictable,	since	$y_\infty$	is	$\overline{\mathcal{F}}^\infty_t$-predictable	and		$G$	satisfies	\eqref{noise1}.
	Now,	we	are	in	position	to	use	\cite[Lemma	2.1]{Debussche11}	and	deduce
for	any	$t\in[0,T]$	\begin{align*}
	\int_0^t	P_kG(\cdot,\overline{y_k})d\overline{	\mathcal{W}}_k\to	\int_0^t	G(\cdot,y_\infty)d\mathcal{W}_\infty	\text{ in	probability	in  } L^2(0,T;(L^2(D))^d)).
	\end{align*}
	To	obtain	the	first	claim	of	Lemma	\ref{Lemma-cv-*},	note	that	for	any	$t\in[0,T]$
	\begin{align*}
	\overline{\E}\vert	\int_0^t	P_kG(\cdot,\overline{y_k})d\overline{	\mathcal{W}}_k\vert^4\leq	C\overline{\E}\big[\sum_{\k \ge 1}\int_0^{T}\Vert\sigma_\k(\cdot,\overline{y_k})\Vert_{2}^2ds\big]^{2}\leq	CL\overline{\E}\big[\int_0^{T}\Vert\overline{y_k}\Vert_{2}^2ds\big]^{2}\leq	CLT\overline{\E}\big[\int_0^{T}\Vert\overline{y_k}\Vert_{2}^4ds\big]\leq	K,
	\end{align*}		
	since	$(\overline{y_k})_k$	is	bounded	by	$K$	in	$	L^4(\Omega\times(0,T);X)$.	Hence,	$(\int_0^\cdot	P_kG(\cdot,\overline{y_k})d\overline{	\mathcal{W}}_k)_k$	is	uniformly	integrable	in	$L^p(\overline{	\Omega}),1\leq	p<4$	and	 Vitali's  theorem	implies	\eqref{cv-stoch-integ}.\\
	
	From	Lemma	\ref{skorohod-cv}	and	Lemma	\ref{approx-martingale}$_{(2)}$,	it follows that	$\overline{	y_k}\to	y_\infty$	in	$C([0,T];U^\prime)$	$\overline{P}$-a.s.	and	$(\overline{y_k})_k$	is	bounded	by	$K$	in	$L^2(\overline\Omega;C([0,T];H))$.	Since	$H\hookrightarrow	U^\prime$,	there	exist	$C>0$	independent	of	$k$	such	that
	\begin{align*}
	\sup_{k\in	\mathbb{N}}\overline{\E}\sup_{s\in [0,T]}\Vert	\overline{y_k}(s)\Vert_{U^\prime}^2\leq	C\sup_{k\in	\mathbb{N}}\overline{\E}\sup_{s\in [0,T]}\Vert	\overline{y_k}(s)\Vert_{H}^2\leq	CK^2.
	\end{align*}
	Hence,	Vitali's	theorem	esnures	that	$\overline{	y_k}\to	y_\infty$	in	$L^q(\overline{\Omega};C([0,T];U^\prime)$		for	any	$1\leq	q	<2$
	and	for	any	$t\in	[0,T]$:	$	\overline{y_k}(t)	\rightharpoonup	y_\infty(t)	\text{	in	}		L^q(\overline{\Omega};U^\prime).$	Recall	that
	$(\overline{y_k})_k$	is	bounded		in	$L^2(\overline\Omega;C([0,T];H))$	to	obtain	\eqref{cont-time}	and	$\bar y_0= y_\infty(0)$.
	
\end{proof}
\subsubsection{Step	2:	Passage	to	the	limit	and	identification	of	limits}	From		\eqref{eqn-martingale-appr},	we	have	\begin{align}\label{Pass-to-limit}
(\overline{	y_k}(t),e_i)=(\overline{y_0^k},e_i)&+	 \displaystyle\int_0^t\langle	F+\nu	\Delta \overline{	y_k}-\overline{	y_k}\cdot \nabla	\overline{	y_k}+\alpha	\text{div}(A(\overline{	y_k})^2)+\beta		\text{div}(|A(\overline{	y_k})|^2A(\overline{	y_k})),e_i\rangle	ds\nonumber\\
&\quad+\int_0^{t} (G(\cdot,\overline{y_k})\,d\overline{	\mathcal{W}}_k,e_i) \\
=(\overline{y_0^k},e_i)&+	 \displaystyle\int_0^t\langle	F-\overline{	y_k}\cdot \nabla	\overline{	y_k}-S(\overline{y_k}),e_i\rangle	ds+\int_0^{t} (G(\cdot,\overline{y_k})\,d\overline{	\mathcal{W}}_k,e_i);\quad	i=1,\cdots,k.\nonumber
\end{align}
Thus,	there	exist		a	$H$-valued	square-integrable		$(\overline{\mathcal{F}}^{\infty}_t)_{t\in [0,T]}$-predictable	process			denoted	by	$y_\infty$	and	a	predictable	process	$\xi$	belongs	to	$	L^{4/3}(\overline\Omega;L^{4/3}(0,T;X^\prime))$	such	that	
\begin{align*}
y_\infty\in	L^2(\overline\Omega\times	(0,T);V)\cap	L^4(\overline\Omega\times(0,T);X)\cap	L^2_{w-*}(\overline\Omega;L^\infty(0,T;H)).
\end{align*}
\begin{enumerate}
	\item[(i)]	By	using	Lemma	\ref{lemma-cv-limit}	and	Lemma	\ref{Lemma-cv-*},	passing	to	the	limit	in	\eqref{Pass-to-limit}	as	$k\to\infty$,	we	obtain	for	almost	every	$(\overline{\omega},t)\in\overline{\Omega}\times[0,T]$	
	\begin{align}\label{eqn-limit1}
	(y_\infty(t),e_i)=(y_\infty(0),e_i)&+	 \displaystyle\int_0^t\langle	F-y_\infty\cdot \nabla	y_\infty-\xi,e_i\rangle	ds+ (\int_0^t		G(\cdot,y_\infty)d\mathcal{W}_\infty,e_i);\quad	\forall	i\in\mathbb{N} 
	\end{align}
	with	respect	to	new	stochastic	basis	$(\overline{\Omega},\overline{\mathcal{F}},\overline{P};(\overline{\mathcal{F}}^{\infty}_t)_{t\in [0,T]})$,	associated	with	$\mathcal{W}_\infty$.
	\item[(ii)]	Since	$y_\infty$	is	$(\overline{\mathcal{F}}^{\infty}_t)_{t\in [0,T]}$-predictable,	belongs	to	$	L^2(\overline\Omega\times	(0,T);V)$	and		$\mathcal{W}_\infty=(\mathcal{W}_\infty(t))_{t\in [0,T]}$ is a $H_0$-valued,	$Q$-Wiener	process,	we	obtain	that		$(\int_0^t		G(\cdot,y_\infty)d\mathcal{W}_\infty)$	is	 $(L^2(D))^d$ -valued continuous square integrable martingale	and	\eqref{eqn-limit1}	holds	for	any	$t\in[0,T]$.	Moreover,	by	using	\eqref{eqn-limit1}	one	has	$\overline{P}$-a.s.	$\overline{\omega}\in	\overline{\Omega}:$ $y_\infty(\overline{\omega},\cdot
	)\in	C([0,T];X^\prime)\cap L^\infty(0,T;H)$,	which		yields $y_\infty(\overline{\omega})\in C([0,T];H_w)$.
	
	\item[(iii)]	
	Since	$U$	is	separable	Hilbert	space,	\eqref{eqn-limit1}	holds	for	any	$v\in	U$.	By	taking	into	account	Lemma	\ref{lemma-cv-limit}	and		the	regularity	of	$y_\infty$,	we	obtain	$$F-y_\infty\cdot \nabla	y_\infty-\xi	\in	L^{4/3}(\overline\Omega;L^{4/3}(0,T;X^\prime))$$
	and,	by	density	argument	
	\begin{align}
	(y_\infty(t),v)=(y_\infty(0),v)&+	 \displaystyle\int_0^t\langle	F-y_\infty\cdot \nabla	y_\infty-\xi,v\rangle	ds+ (\int_0^t		G(\cdot,y_\infty)d\mathcal{W}_\infty,v);\quad	\forall	v\in	X. 
	\end{align}Thus,  we have back $y_\infty(\overline{\omega})\in C([0,T];H)$, according to \cite[Thm 4.2.5]{Liu-Rock}.
\end{enumerate}

Since	the	last	equality	holds	in	$X^\prime$-sense,	we	can		apply	It\^o's	formula	for	$\Vert	\cdot\Vert_2^2$	to	get	
\begin{align}\label{lim-mon}
\Vert	y_\infty(t)\Vert_2^2&=\Vert	y_\infty(0)\Vert_2^2+	 2\displaystyle\int_0^t\langle	F	-y_\infty\cdot \nabla	y_\infty-\xi,y_\infty\rangle	ds+ 2(\int_0^t		G(\cdot,y_\infty)d\mathcal{W}_\infty,y_\infty)+\sum_{\k\geq 1}\int_0^t\Vert\sigma_{\k}(\cdot,y_\infty)\Vert_{2}^2ds\notag\\
&=\Vert	y_\infty(0)\Vert_2^2-	 2\displaystyle\int_0^t\langle	\xi-F,y_\infty\rangle	ds+ 2(\int_0^t		G(\cdot,y_\infty)d\mathcal{W}_\infty,y_\infty)+\sum_{\k\geq 1}\int_0^t\Vert\sigma_{\k}(\cdot,y_\infty)\Vert_{2}^2ds.
\end{align}
By	using		\eqref{eqn-martingale-appr}	and	similair	similair arguments as the proof of Theorem	\eqref{Thm-app},	we	obtain
\begin{align}\label{app-mon}
\Vert	\overline{	y_k}(t)\Vert_2^2&=\Vert\overline{y_0^k}\Vert_2^2	-	 2\displaystyle\int_0^t\langle	S(\overline{	y_k})-F,\overline{	y_k}\rangle	ds+ 2(\int_0^t		G(\cdot,\overline{	y_k})d\overline{\mathcal{W}}_k,\overline{	y_k})+\sum_{\k\geq 1}\int_0^t\Vert	P_k\sigma_{\k}(\cdot,\overline{	y_k})\Vert_{2}^2ds.
\end{align}
Therefore,	after subtraction	\eqref{lim-mon}	from	\eqref{app-mon}
\begin{align*}
&\Vert	\overline{	y_k}(t)\Vert_2^2-\Vert	y_\infty(t)\Vert_2^2=\Vert\overline{y_0^k}\Vert_2^2-\Vert	y_\infty(0)\Vert_2^2+2\displaystyle\int_0^t[\langle	\xi,y_\infty\rangle-\langle	S(\overline{	y_k}),\overline{	y_k}\rangle]	ds+2\int_0^t\langle	F,\overline{	y_k}-y_\infty\rangle	ds\\
&+2(\int_0^t		G(\cdot,\overline{	y_k})d\overline{	\mathcal{W}}_k,\overline{	y_k})-2(\int_0^t		G(\cdot,y_\infty)d\mathcal{W}_\infty,y_\infty)+\sum_{\k\geq 1}\int_0^t[\Vert	P_k\sigma_{\k}(\cdot,\overline{	y_k})\Vert_{2}^2-\Vert\sigma_{\k}(\cdot,y_\infty)\Vert_{2}^2]ds
\end{align*}
Since	$\overline{	y_k}(t)$	converges	weakly	to		$y_\infty(t)$	in	$L^2(\overline{ \Omega},H)$	for	any	$t\in[0,T]$,	see	\eqref{cont-time},	we	get
$$\liminf_{k}[\overline{\E}\Vert	\overline{	y_k}(t)\Vert_2^2-\overline{\E}\Vert	y_\infty(t)\Vert_2^2]\geq	0,	\forall	t\in	[0,T].$$
Set	$t=T$,	take	the	expectation		and	pass	to	the	limit	as	$k\to\infty$	to	obtain
\begin{align}\label{eqn-identif}
0\leq	\liminf_{k}\overline\E\displaystyle\int_0^T[\langle	\xi,y_\infty\rangle-\langle	S(\overline{	y_k}),\overline{	y_k}\rangle]	ds,
\end{align}
where	we	used	Lemma	\ref{lemma-cv-limit}	to	obtain	the	last	inequality.
Now,	let	$v\in		L^4(\overline\Omega\times(0,T);X)$		and	note	that	
\begin{align*}
\overline\E\displaystyle\int_0^T\langle	\xi-S(v),y_\infty-v\rangle	ds&=\overbrace{\overline\E\displaystyle\int_0^T\langle	S(\overline{y_k})-S(v),\overline{y_k}-v\rangle	ds}^{\geq	0,\text{	thanks	to	Corollary	\ref{Monotone-S-ope}	}}+\overbrace{\overline\E\displaystyle\int_0^T\langle	S(\overline{y_k})-\xi,v\rangle	ds}^{\to	0\text{,	thanks	to	\eqref{cv-mono-oper}	}}\\&+\overbrace{\overline\E\displaystyle\int_0^T\langle	S(v),\overline{y_k}-y_\infty\rangle	ds}^{\to	0\text{,	thanks	to	\eqref{cv1}	}}+\overbrace{\overline\E\displaystyle\int_0^T[\langle	\xi,y_\infty\rangle-\langle	S(\overline{	y_k}),\overline{	y_k}\rangle]	ds}^{\geq	0\text{,	thanks	to	\eqref{eqn-identif}	}}.\end{align*}
Therefore,	we	get	
\begin{align*}
\overline\E\displaystyle\int_0^T\langle	\xi-S(v),y_\infty-v\rangle	ds	\geq	0,\quad	\forall	v	\in		L^4(\overline\Omega\times(0,T);X).
\end{align*}
Let	$\lambda\in	\mathbb{R}$,
by	using	a	Minty's	trick,	namely	take	$v=y_\infty+\lambda\phi$	to	obtain
\begin{align*}
\overline\E\displaystyle\int_0^T\langle	\xi-S(y_\infty+\lambda\phi),-\lambda\phi\rangle	ds	\geq	0,\quad	\forall	\phi	\in		L^4(\overline\Omega\times(0,T);X).
\end{align*}
Thus,	by	letting	$\lambda\to0$,	we	are	able	to	deduce	after	routine	steps	(see	\textit{e.g.}	\cite[Lemma	2.13]{Roubicek})	that	$S(y_\infty)=\xi$.	Finally,	there	exist	a		stochastic	basis	$(\overline{\Omega},\overline{\mathcal{F}},\overline{P};(\overline{\mathcal{F}}^{\infty}_t)_{t\in [0,T]})$	and	Wiener	process	$\mathcal{W}_\infty$	such	that:
\begin{itemize}
	\item	there	exist	$y_\infty$,	$(\overline{\mathcal{F}}^{\infty}_t)_{t\in [0,T]}$-adapted,	with		continuous	paths	in	$H$.
	\item	$y_\infty\in	L^2(\overline\Omega\times	(0,T);V)\cap	L^4(\overline\Omega\times(0,T);X)\cap	L^2_{w-*}(\overline\Omega;L^\infty(0,T;H))$.
	\item	$\overline	P$-a.s.	for	any	$t\in[0,T]$	
	\begin{align*}
	(y_\infty(t),v)=(y_\infty(0),v)&+	 \displaystyle\int_0^t\langle	F-y_\infty\cdot \nabla	y_\infty+\nu\Delta	y_\infty+\alpha\text{ div}(A(y_\infty)^2)+\beta\text{ div}(\vert	A(y_\infty)\vert^2A(y_\infty)),v\rangle	ds\\
	&+ \int_0^t(		G(\cdot,y_\infty),v)d\mathcal{W}_\infty;\quad	\forall	v\in	X.
	\end{align*}\end{itemize}
	\subsection{A	stability	result	and	pathwise	uniqueness}
\begin{lemma}\label{Lemma-V-stability} Assume that $(\mathcal{W}(t))_{t\geq 0}$ is a $Q$-Wiener process in $H_0$ with respect to the stochastic basis $(\Omega,\mathcal{F},P; (\mathcal{F}_t)_{t\geq 0})$ and $y_1,y_2$ are two  solutions  in the sense of Definition \ref{strongsol-def} to \eqref{I} with respect to the initial conditions $y_0^1,y_0^2$, on $(\Omega,\mathcal{F},P; (\mathcal{F}_t)_{t\geq 0})$. Then
\begin{align*}
&\E	\sup_{r\in [0,t]}g(r)\Vert (y_1-y_2)(r)\Vert_2^2\leq	2\E\Vert y_0^1-y_0^2\Vert_2^2e^{2C(L)t}	,	\quad	\forall	t\in[0,T],	\end{align*}
where	$g(t)=e^{-\frac{C_1^2}{\nu\epsilon_0}\int_0^t\Vert	\nabla	y_1(r)\Vert_{L^3}^2dr}$	and	$C_1>0$,	depending	only	on	$H^1_0(D)\hookrightarrow L^6(D)$.
\end{lemma}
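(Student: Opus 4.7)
The plan is to set $w := y_1 - y_2$, to derive via It\^o's formula a differential inequality for $\|w\|_2^2$, and to absorb the $y_1$-dependent drift into a well-chosen random weight $g(t)$. Subtracting the two equations from Definition \ref{strongsol-def}, $w$ solves in $X^\prime$
\begin{equation*}
dw = -\bigl(S(y_1)-S(y_2)\bigr)dt - \bigl((y_1\cdot\nabla)y_1 - (y_2\cdot\nabla)y_2\bigr)dt + \bigl(G(\cdot,y_1)-G(\cdot,y_2)\bigr)d\mathcal{W}.
\end{equation*}
Since $y_i\in L^2(0,T;V)\cap L^4(0,T;X)\cap L^\infty(0,T;H)$ and $\partial_t w\in L^{4/3}(0,T;X^\prime)$, It\^o's formula applies to $\|w\|_2^2$ (cf. Remark \ref{Prog-meas} and \cite[Thm.~4.2]{Pardoux}).

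Three ingredients then drive the estimate. First, decomposing $S = T + \nu\epsilon_0(-\Delta) + \beta\epsilon_0(-\operatorname{div}(|A|^2A))$, Lemma \ref{Lemma-monotone} together with the elementary monotonicity of the $p$-Laplacian-type part yields the coercivity
\begin{equation*}
\langle S(y_1)-S(y_2),w\rangle \;\geq\; \nu\epsilon_0\|\nabla w\|_2^2.
\end{equation*}
Second, writing $\bigl((y_1\cdot\nabla)y_1-(y_2\cdot\nabla)y_2,w\bigr)=b(w,y_1,w)$ (using $b(y_2,w,w)=0$) and invoking $H^1_0(D)\hookrightarrow L^6(D)$ with constant $C_1$ gives $|b(w,y_1,w)|\le C_1\|\nabla w\|_2\|\nabla y_1\|_{L^3}\|w\|_2$, and Young's inequality yields
\begin{equation*}
2|b(w,y_1,w)| \;\leq\; \nu\epsilon_0\|\nabla w\|_2^2 + \tfrac{C_1^2}{\nu\epsilon_0}\|\nabla y_1\|_{L^3}^2\|w\|_2^2.
\end{equation*}
Third, the Lipschitz assumption \eqref{noise1} gives $\sum_{\k}\|\sigma_\k(\cdot,y_1)-\sigma_\k(\cdot,y_2)\|_2^2\leq L\|w\|_2^2$. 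Combining the three yields
\begin{equation*}
d\|w\|_2^2 + \nu\epsilon_0\|\nabla w\|_2^2\,dt \;\leq\; \Bigl[\tfrac{C_1^2}{\nu\epsilon_0}\|\nabla y_1\|_{L^3}^2 + L\Bigr]\|w\|_2^2\,dt + 2\bigl(G(\cdot,y_1)-G(\cdot,y_2)\bigr)\cdot w\, d\mathcal{W}.
\end{equation*}

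The key trick is to multiply by the adapted weight $g(t)=e^{-\frac{C_1^2}{\nu\epsilon_0}\int_0^t\|\nabla y_1(r)\|_{L^3}^2dr}$, whose derivative exactly cancels the $y_1$-dependent drift since $g'=-\frac{C_1^2}{\nu\epsilon_0}\|\nabla y_1\|_{L^3}^2 g$. Applying the product rule (g is of bounded variation in t) gives
\begin{equation*}
d\bigl(g\|w\|_2^2\bigr) + \nu\epsilon_0\,g\|\nabla w\|_2^2\,dt \;\leq\; L\,g\|w\|_2^2\,dt + 2g\bigl(G(\cdot,y_1)-G(\cdot,y_2)\bigr)\cdot w\, d\mathcal{W}.
\end{equation*}
Integrating, taking $\sup_{r\in[0,t]}$ and then expectation, and estimating the stochastic integral by Burkholder--Davis--Gundy followed by Young to absorb half of $\E\sup g\|w\|_2^2$ on the left, one reaches
\begin{equation*}
\tfrac{1}{2}\E\sup_{r\in[0,t]} g(r)\|w(r)\|_2^2 \;\leq\; \E\|y_0^1-y_0^2\|_2^2 + C(L)\int_0^t \E\sup_{s\in[0,r]} g(s)\|w(s)\|_2^2\,dr,
\end{equation*}
after which Gronwall's lemma delivers the claimed bound with the factor $2$ and exponent $2C(L)t$.

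The main obstacle is the convective term, which is genuinely cubic and only yields $\|\nabla y_1\|_{L^3}^2$ as a drag coefficient; a plain application of Gronwall would produce the exponential of a random quantity and prevent a pathwise-in-expectation estimate. The resolution is precisely the weighted formulation with $g$, which is legitimate because $y_1\in L^2(0,T;V)\cap L^4(0,T;X)$ and interpolation between $L^2$ and $L^4$ ensures $\int_0^T\|\nabla y_1\|_{L^3}^2 dt<\infty$ $P$-a.s., so $g$ is a positive adapted process of bounded variation on $[0,T]$. The coercivity step requires careful bookkeeping to split off the $\epsilon_0$-fraction of the dissipation used to absorb the convective term; this is the reason why the stability is formulated with the weight rather than pathwise uniqueness alone, which then follows as in Corollary \ref{cor-pathwise-uniqueness}.
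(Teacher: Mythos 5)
Your proposal is correct and follows essentially the same route as the paper: the same decomposition of $S$ into the monotone operator $T$ plus the $\nu\epsilon_0$- and $\beta\epsilon_0$-coercive parts, the same $\|y\|_6\|\nabla y_1\|_3\|y\|_2$ estimate of the convective term absorbed by half of the $\nu\epsilon_0$-dissipation, the same exponential weight $g$ chosen to cancel the resulting drift, and the same BDG--Young--Gronwall closing argument. No gaps.
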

\begin{proof}
	Let $y_1$ and $y_2$ be two solutions of \eqref{I}  associated	to   the initial data $y_0^1$	and	$y_0^2$, respectively.\\

	Set $y=y_1-y_2, y_0=y_0^1-y_0^2$	and	recall	that	$\text{ div	}	y=0$.  We have for any $t\in [0,T]$
	\begin{align*}
	y(t)-y_0&=-\int_0^t\nabla(\mathbf{\bar P}_1-\mathbf{\bar P}_2)ds+ \int_0^t\epsilon_0\nu\Delta y-\big[(y\cdot \nabla)y_1+(y_2\cdot \nabla)y\big]ds\\&+\beta\epsilon_0\int_0^t\text{div}\Big[\vert	A(y_1)\vert^2A(y_1)-\vert	A(y_2)\vert^2A(y_2)\Big]ds-\int_0^t[T(y_1)-T(y_2)]ds\\&\hspace*{4cm}+ \int_0^t[G(\cdot,y_1)-G(\cdot,y_2)]d\mathcal{W}. 
	\end{align*}
		By applying  It\^o formula,	see	\textit{e.g.}	\cite[Thm.	4.2]{Pardoux},	with	$F(y)=\Vert	y\Vert_{2}^2$, one gets	for	any	$t\in	[0,T]$
		\begin{align*}
				&\Vert y(t)\Vert_2^2-\Vert y_0\Vert_2^2=-2\epsilon_0\nu\int_0^t\Vert \nabla y\Vert_2^2ds-2\int_0^tb(y,y_1,y) ds\\&+2\beta\epsilon_0\int_0^t\langle	\text{div}\Big[\vert	A(y_1)\vert^2A(y_1)-\vert	A(y_2)\vert^2A(y_2)\Big],y_1-y_2\rangle	ds -2\int_0^t
			\langle T(y_1)-T(y_2),y_1-y_2\rangle ds\vspace{2mm}\\&\quad+2\int_0^t(G(\cdot,y_1)-G(\cdot,y_2), y_1-y_2)d\mathcal{W}+\sum_{\k\ge 1}\int_0^t\Vert  \sigma_\k(\cdot,y_1)-\sigma_\k(\cdot,y_2)\Vert_2^2ds.
		\end{align*}
		Now,	denote	by	$g$	the	following	function	$g(t)=e^{-C\int_0^t\Vert	\nabla	y_1(r)\Vert_{L^3}^2dr}$,	$C>0$ (to	be	chosen),	we	recall	the	following	"stochastic"	integration	by	parts	formula	
\begin{align*}
					g(t)\Vert y(t)\Vert_2^2-\Vert y_0\Vert_2^2=\int_0^tg(s)d\Vert y(s)\Vert_2^2+\int_0^tg^\prime(s)\Vert y(s)\Vert_2^2ds.
	\end{align*}
Therefore,	we	get
		\begin{align*}
	&g(t)\Vert y(t)\Vert_2^2-\Vert y_0\Vert_2^2+C\int_0^tg(s)\Vert	\nabla	y_1\Vert_{L^3}^2\Vert y\Vert_2^2ds+2\epsilon_0\nu\int_0^tg(s)\Vert \nabla y\Vert_2^2ds\\&=-2\int_0^tg(s)b(y,y_1,y) ds+2\beta\epsilon_0\int_0^tg(s)\overbrace{\langle	\text{div}\Big[\vert	A(y_1)\vert^2A(y_1)-\vert	A(y_2)\vert^2A(y_2)\Big],y_1-y_2\rangle}^{\leq	0,\text{	see	\cite[Lemma	2.4]{Paicu}	}}	ds \vspace{2mm}\\&\quad-2\int_0^t
g(s)	\overbrace{\langle T(y_1)-T(y_2),y_1-y_2\rangle}^{\geq	0,\text{	thanks	to	Lemma	\ref{Lemma-monotone}	}} ds+2\int_0^tg(s)(G(\cdot,y_1)-G(\cdot,y_2), y_1-y_2)d\mathcal{W}\\&\qquad+\sum_{\k\ge 1}\int_0^tg(s)\Vert  \sigma_\k(\cdot,y_1)-\sigma_\k(\cdot,y_2)\Vert_2^2ds.	\end{align*}
Thus
	\begin{align}
& g(t)\Vert y(t)\Vert_2^2-\Vert y_0\Vert_2^2+C\int_0^tg(s)\Vert	\nabla	y_1\Vert_{L^3}^2\Vert y\Vert_2^2ds+2\epsilon_0\nu\int_0^tg(s)\Vert \nabla y\Vert_2^2ds\\&\leq	2\int_0^tg(s)\vert	b(y,y_1,y)\vert	 ds+\sum_{\k\ge 1}\int_0^tg(s)\Vert  \sigma_\k(\cdot,y_1)-\sigma_\k(\cdot,y_2)\Vert_2^2ds.\notag
\\
&\qquad+2\int_0^t(G(\cdot,y_1)-G(\cdot,y_2), y_1-y_2)d\mathcal{W}\notag	\end{align}
Since	$y_1\in	L^4(\Omega\times(0,T);(W^{1,4}_0(D))^d)$  and 
	 $H^1_0(D)\hookrightarrow L^6(D)$,	then there	exists	$C_1>0$ such	that
	 	\begin{align*}
	 \vert	b(y,y_1,y)\vert 
	&=\left\vert \int_D(y\cdot \nabla)y_1\cdot y dx\right\vert \leq \Vert y\Vert_6\Vert \nabla y_1\Vert_3\Vert	y\Vert_{2}\leq \dfrac{\nu\epsilon_0}{2}\Vert	\nabla	y\Vert_2^2+\dfrac{C_1^2}{2\nu\epsilon_0}\Vert	\nabla y_1\Vert_3^2\Vert		y\Vert_2^2. 
	\end{align*}
	By	using	\eqref{noise1},	one	has
	\begin{align*}
		\sum_{\k\ge 1}\int_0^tg(s)\Vert  \sigma_\k(\cdot,y_1)-\sigma_\k(\cdot,y_2)\Vert_2^2ds\leq	L\int_0^tg(s)\Vert  y(s)\Vert_2^2ds.
	\end{align*}
	By	gathering	the	above	estimate,	we	obtain
	\begin{align*}
		g(t)\Vert y\Vert_2^2(t)&-\Vert y_0\Vert_2^2+(C-\dfrac{C_1^2}{\nu\epsilon_0})\int_0^tg(s)\Vert	\nabla	y_1\Vert_{L^3}^2\Vert y\Vert_2^2ds+\epsilon_0\nu\int_0^tg(s)\Vert \nabla y\Vert_2^2ds\\&\leq	L\int_0^tg(s)\Vert  y(s)\Vert_2^2ds
		+2\int_0^t(G(\cdot,y_1)-G(\cdot,y_2), y_1-y_2)d\mathcal{W}.	\end{align*}
	By	choosing	$C=\dfrac{C_1^2}{\nu\epsilon_0}$	and	taking	the	expectation,	we	infer	that		for	any	$t\in[0,T]$	
		\begin{align*}
	&\E	\sup_{r\in [0,t]}g(r)\Vert y(r)\Vert_2^2\leq	\E\Vert y_0\Vert_2^2+	L\E\int_0^tg(s)\Vert  y(s)\Vert_2^2ds+2\E\sup_{s\in [0,t]}&\vert\int_0^sg(s)(G(\cdot,y_1)-G(\cdot,y_2), y_1-y_2)d\mathcal{W}\vert.	\end{align*}
	Note	that	$g\in	L^\infty(\Omega_T)$,	thus
	$(\displaystyle\int_0^{t}g(s)(G(\cdot,y_1)-G(\cdot,y_2), y_1-y_2)d\mathcal{W})$	is	$(\mathcal{F}_t)_{t\in [0,T]}$-martingale.
	Let	$t\in	]0,T]$,
	by using Burkholder–Davis–Gundy  and Young inequalities, 	 we deduce	the	existence	of	$C_B>0$	such	that
	\begin{align*}
	2\E\sup_{s\in [0,t]}&\vert\int_0^sg(s)(G(\cdot,y_1)-G(\cdot,y_2), y_1-y_2)d\mathcal{W}\vert 
	\\&\leq C_B\E\big[\sum_{\k \ge 1}\int_0^{t}(g(s))^2\Vert\sigma_\k(\cdot,y_1)-\sigma_\k(\cdot,y_2)\Vert_{2}^2\Vert y_1-y_2\Vert_2^2ds\big]^{1/2}\\
	&\leq \dfrac{1}{2} \E \sup_{s\in [0,t]} g(s)\Vert y\Vert_2^2+C(L) \E\int_0^{t} g(s)\Vert y \Vert_2^2ds.
	\end{align*}
	where	$C(L):=	2L(1+2C_B^2)$.	Therefore
\begin{align*}		&\dfrac{1}{2}\E	\sup_{r\in [0,t]}g(r)\Vert y(r)\Vert_2^2\leq	\E\Vert y_0\Vert_2^2+	C(L)\E\int_0^tg(s)\Vert  y(s)\Vert_2^2ds.	\end{align*}

	Finally, Gronwall's inequality ensures Lemma \ref{Lemma-V-stability}.
	\end{proof}
\begin{cor}\label{cor-pathwise-uniqueness}
Let	$y_1,y_2$	be	two	solutions	to	\eqref{I}	defined	on		  $(\Omega,\mathcal{F},P; (\mathcal{F}_t)_{0\leq	t\leq T}$		with	the	same	$(\mathcal{W}(t))_{0\leq	t\leq T})$ such	that	$y_i(0)=y_0,i=1,2.$	Then
\begin{align*}
P\big[	y_1(t)=y_2(t)\big]=1	\text{	for	every	}	t\in[0,T].
\end{align*}
\end{cor}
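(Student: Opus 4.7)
The plan is to deduce the corollary as a direct consequence of the stability estimate in Lemma \ref{Lemma-V-stability}. Setting $y_0^1 = y_0^2 = y_0$, the right-hand side of the estimate vanishes, so for every $t \in [0,T]$ one immediately obtains
$$\E \sup_{r\in[0,t]} g(r)\,\|y_1(r)-y_2(r)\|_2^2 \leq 0,$$
where $g(r) = \exp\bigl(-\tfrac{C_1^2}{\nu\epsilon_0}\int_0^r \|\nabla y_1(s)\|_3^2\,ds\bigr)$. Since $g \geq 0$, this forces $g(r)\,\|y_1(r)-y_2(r)\|_2^2 = 0$ for every $r \in [0,t]$, $P$-almost surely.

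The remaining point is to promote this to $\|y_1(r)-y_2(r)\|_2 = 0$, for which one must check that $g(r) > 0$ $P$-a.s.\ for every $r \in [0,T]$. By Definition \ref{strongsol-def}, $y_1 \in L^4(\Omega\times(0,T);X)$, so Fubini yields $y_1(\omega,\cdot) \in L^4(0,T;X)$ for $P$-a.e.\ $\omega$. Since $D$ is bounded, we have the continuous embedding $L^4(D) \hookrightarrow L^3(D)$, and therefore
$$\int_0^T \|\nabla y_1(s)\|_3^2\,ds \;\leq\; C(|D|,T)\Bigl(\int_0^T \|\nabla y_1(s)\|_4^4\,ds\Bigr)^{1/2} \;<\; \infty \quad P\text{-a.s.}$$
This shows $g(r) \in (0,1]$ for every $r \in [0,T]$, $P$-a.s.

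Combining the two observations, $P$-a.s.\ $\sup_{r\in[0,t]} \|y_1(r)-y_2(r)\|_2 = 0$, so in particular $y_1(t) = y_2(t)$ in $H$ for every $t \in [0,T]$ on a set of full probability; by the continuity of both paths in $H$ (see Remark \ref{Prog-meas}), this indistinguishability holds simultaneously on $[0,T]$. In particular, $P[y_1(t)=y_2(t)] = 1$ for every $t\in[0,T]$.

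I do not expect any serious obstacle: the whole argument is an elementary post-processing of Lemma \ref{Lemma-V-stability}. The only point requiring a sentence of care is the $P$-a.s.\ positivity of the exponential weight $g$, which however is a straightforward consequence of the regularity $y_1 \in L^4(\Omega\times(0,T);X)$ built into the definition of strong solution and the bounded-domain embedding $L^4(D)\hookrightarrow L^3(D)$.
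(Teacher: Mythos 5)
Your argument is correct, and it reaches the conclusion by a slightly different route than the paper for the one nontrivial step, namely removing the weight $g$. The paper starts from $\E\,g(t)\Vert (y_1-y_2)(t)\Vert_2^2=0$ and then localizes: it introduces the stopping times $\tau_N=\inf\{t\le T:\int_0^t\Vert\nabla y_1(r)\Vert_{L^3}^2\,dr>N\}$, on which the weight is bounded below by $e^{-CN}$, deduces $\E\Vert(y_1-y_2)(t\wedge\tau_N)\Vert_2^2=0$, shows $\tau_N\to T$ in probability via a Chebyshev estimate on $\E\int_0^T\Vert\nabla y_1\Vert_{W^{1,4}_0}^2\,dr$, and concludes by monotone convergence. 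You instead argue pathwise: on the full-measure event where $\int_0^T\Vert\nabla y_1(s)\Vert_3^2\,ds<\infty$ (which follows from $y_1\in L^4(\Omega\times(0,T);X)$, the embedding $L^4(D)\hookrightarrow L^3(D)$ on the bounded domain, and Cauchy--Schwarz in time), the weight $g(r)$ is strictly positive for every $r$, so $g(r)\Vert y_1(r)-y_2(r)\Vert_2^2=0$ forces $\Vert y_1(r)-y_2(r)\Vert_2=0$. Both arguments are valid and rest on the same underlying fact (a.s.\ finiteness of $\int_0^T\Vert\nabla y_1\Vert_3^2$); yours is more elementary and, by exploiting the supremum inside the expectation in Lemma \ref{Lemma-V-stability}, directly yields indistinguishability on $[0,T]$ rather than only equality at each fixed $t$ (the appeal to path continuity at the end is therefore redundant but harmless), whereas the paper's stopping-time version recovers the unweighted $L^2(\Omega)$ estimate $\E\Vert(y_1-y_2)(t)\Vert_2^2=0$, which is slightly more quantitative.
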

\begin{proof}
Let	$t\in	[0,T]$,	thanks	to	Lemma	\ref{Lemma-V-stability}	we	have
$
\E	g(t)\Vert (y_1-y_2)(t)\Vert_2^2	=	0.	$
	Define	the	following	sequence	of	stopping	times
\begin{align}
\tau_N=\inf\{t:	0\leq	t\leq	T;	\quad	\int_0^t\Vert	\nabla	y_1(r)\Vert_{L^3}^2dr>N	\},\quad	N\in	\mathbb{N}^*.
\end{align}
It	follows		that	$\E	\Vert (y_1-y_2)(t\wedge	\tau_N)\Vert_2^2	=	0$.	On	the	other	hand,	note	that
	\begin{align*}
	NP(\tau_N <T)\leq	\E( 1_{\{ \tau_N<T\}} \int_0^t\Vert	\nabla	y_1(r)\Vert_{L^3}^2dr) \leq 	C\E  \int_0^T\Vert	\nabla	y_1(r)\Vert_{W^{1,4}_0}^2dr \leq 	\mathbf{C}.
	\end{align*}
	Therefore	 $\tau_N \to T$ in probability, as $N\to \infty$.	On	the	other	hand,	
	 $\{\tau_N\}_N$ is  an 
	increasing sequence, then the monotone convergence (Beppo Levi's) theorem	 allows to pass to the limit in		$\E	\Vert (y_1-y_2)(t\wedge	\tau_N)\Vert_2^2	=	0$,	as	$N\to	\infty$	and deduce that	
	$\E	\Vert (y_1-y_2)(t)\Vert_2^2	=	0$,	which	completes	the	proof.
\end{proof}
\section{Existence	of	an	invariant	measure	and	ergodicity}\label{Section-invariant-measure}
In	this	section,	we	are	interested	to	prove	the	existence	of	an	ergodic		invariant	measure	under	some	natural	assumptions.	For	that,	let	us	precise	the	assumptions	on	the	data	to	show	the	desired	result.\\

	Let	$\theta_\k:  \R^d\mapsto \R^d, \; \k \in\mathbb{N},$  	be	 
a family of Carath\'eodory functions
such	that
$\theta_\k(0)=0$,  and   there exists $L > 0$  such that   
\begin{align}
\label{noise1-2}
\quad &\sum_{\k\ge 1}\big| \theta_\k(\lambda)-\theta_\k(\mu)\big|^2  \leq L  |\lambda-\mu|^2;	\quad	\forall	\lambda,\mu \in \R^d.
\end{align}

For any  $H$-valued	predictable	process	$y$, we	define	a	Hilbert-Schmidt	operator	$\mathbb{G}$	as	follows:
\begin{align}\label{noise-invariant-sec}
\mathbb{G}(y): \mathbb{H}\goto (L^2(D))^d, \qquad \mathbb{G}(y)f_\k= \{ x \mapsto \theta_\k\big(y(x)\big)\},
\quad \k \ge 1.
\end{align}

In	this	part,	we	assume		that:
\begin{itemize}
	\item[$\overline{\mathcal{H}}_1:$] 	$y_0\in	H$,	$F\in	X^\prime$	and	the	operator		$\mathbb{G}$	satisfies	\eqref{noise1-2}.
	
\end{itemize}

We	will	prove	the	existence	of	an	ergodic	invariant	measure	for	 the system \eqref{I}, 
which reads	
\begin{align}\label{I-I}
\begin{cases}
dy=\big(F-\nabla \textbf{P}+\nu \Delta y-(y\cdot \nabla)y+\alpha\text{div}(A^2) +\beta \text{div}(|A|^2A)\big)dt+ \mathbb{G}(y)d\mathcal{W} \quad &\text{in } D \times (0,T)\times\Omega,\\
\text{div}(y)=0 \quad &\text{in } D \times (0,T)\times\Omega,\\
y=0 &\text{on } \partial D \times (0,T)\times\Omega,\\
y(x,0)=y_0(x) \quad &\text{in } D,
\end{cases}
\end{align}
As	a	consequence	of	Theorem	\ref{exis-thm-strong},	we	state the next theorem.
\begin{theorem}
	Assume that	$\overline{\mathcal{H}}_1$	holds. Then, there exists a strong solution  to \eqref{I-I} in the sense of Definition \ref{strongsol-def}.
\end{theorem}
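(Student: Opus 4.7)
The plan is to observe that this theorem is essentially a specialization of Theorem \ref{exis-thm-strong}, so the proof reduces to checking that the assumptions $\overline{\mathcal{H}}_1$ imply the hypotheses under which Theorem \ref{exis-thm-strong} was established. First, I would verify the condition on the initial data: since $y_0\in H$ is deterministic, it is trivially $\mathcal{F}_0$-measurable, and $\overline{\E}\Vert y_0\Vert_H^q=\Vert y_0\Vert_H^q<\infty$ for every $q>2$, so $y_0\in L^q(\Omega;H)$ as required by \eqref{data-assumptions}.

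Next, I would check the forcing term: since $F\in X^\prime$ is time-independent, for any $T>0$ we have $\int_0^T\Vert F\Vert_{X^\prime}^{4/3}dt=T\Vert F\Vert_{X^\prime}^{4/3}<\infty$, hence $F\in L^{4/3}(0,T;X^\prime)$ and the second part of \eqref{data-assumptions} holds.

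Finally, I would address the diffusion coefficient: the family $(\theta_\k)_{\k\geq 1}$ given by \eqref{noise1-2} is a time-independent special case of the Carath\'eodory family $(\sigma_\k)_{\k\geq 1}$ required in Subsection \ref{subsec-multiplicative}. Setting $\sigma_\k(t,\lambda):=\theta_\k(\lambda)$, the assumption $\theta_\k(0)=0$ yields $\sigma_\k(t,0)=0$, and \eqref{noise1-2} is exactly \eqref{noise1} with the same Lipschitz constant $L$, uniformly in $t$. Consequently, the operator $\mathbb{G}$ defined in \eqref{noise-invariant-sec} coincides with $G(\cdot,y)$ from Subsection \ref{subsec-multiplicative} and the stochastic integral $\int_0^t\mathbb{G}(y)d\mathcal{W}$ is well-defined as a continuous $(L^2(D))^d$-valued martingale.

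With these three observations, all the hypotheses of Theorem \ref{exis-thm-strong} are satisfied, and an application of that theorem directly yields the existence of a strong (pathwise) solution to \eqref{I-I} in the sense of Definition \ref{strongsol-def}. There is no substantive obstacle here since the assumption $\overline{\mathcal{H}}_1$ is strictly stronger than $\mathcal{H}_1$ combined with \eqref{noise1}; the only thing to note is the trivial embedding of constant-in-time data into the Bochner spaces appearing in $\mathcal{H}_1$.
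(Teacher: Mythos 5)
Your proposal is correct and matches the paper's treatment: the paper states this theorem simply as a consequence of Theorem \ref{exis-thm-strong}, and your three verifications (deterministic $y_0\in H$ lies in $L^q(\Omega;H)$ for all $q>2$, constant $F\in X^\prime$ lies in $L^{4/3}(0,T;X^\prime)$, and the time-independent $\theta_\k$ satisfy \eqref{noise1}) are exactly the routine checks implicit in that reduction.
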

	Let	$y(t;y_0),	t\geq	0$	be	the	unique	strong	solution	to	\eqref{I-I}.
For any bounded Borel function	$\varphi\in	\mathcal{B}_b(H)$	and	$t\geq	0$,	we	define
	\begin{align}\label{transition-prob}
	(P_t\varphi)(y_0)=\E[\varphi(y(t;y_0))],	\quad	y_0\in	H.
	\end{align}
	Notice	that	$(P_t)_{t\geq	0}$	is	a	stochastically	continuous	semigroup	on	the	Banach	space	$\mathcal{C}_b(H)$\footnote{$\mathcal{C}_b(H)$	denotes	the	set	of	real	valued	bounded	continuous	functions	on	$H$.}.	In	other	words,	
	$$	\forall	\varphi\in\mathcal{C}_b(H),\quad	y_0\in	H:	\quad	\lim_{t\to	0}P_t\varphi(y_0)=y_0.$$
	
By		similair	arguments to	the	one	used	in		\cite[Thm.	9.8	and	Cor.	9.9]{Daprato},	we	are	able	to	establish the result:
\begin{prop}\label{prop1}
	The	familly	$y(t,y_0),t\geq	0{\color{blue},}$	is	Markov	i.e.
	$$	\E[\varphi(y(t+s;\eta))\vert	\mathcal{F}_t]=(P_s\varphi)(y(t;\eta))	\quad	\forall	\varphi\in	\mathcal{C}_b(H),	\forall	\eta\in	H,	\forall	t,s>0.$$
	and	$P_{t+s}=P_tP_s$	for	$t,s\geq	0$.	
\end{prop}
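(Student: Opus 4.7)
The plan is to follow the classical strategy of Da Prato--Zabczyk (see \cite[Thm.~9.8]{Daprato}), namely to deduce the Markov property from pathwise uniqueness together with the independence of the increments of $\mathcal{W}$ after time $t$ with respect to $\mathcal{F}_t$. The essential ingredients are already in place: existence and pathwise uniqueness of strong solutions (Theorem \ref{exis-thm-strong} and Corollary \ref{cor-pathwise-uniqueness}), and the stability estimate of Lemma \ref{Lemma-V-stability} from which measurable (indeed continuous) dependence of $y(s;\cdot)$ on the initial datum can be extracted.

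The first concrete step is to fix $t,s\geq 0$ and $\eta\in H$, and to introduce the time-shifted cylindrical Wiener process
\[
\widetilde{\mathcal{W}}(r):=\mathcal{W}(t+r)-\mathcal{W}(t),\qquad r\geq 0,
\]
together with the shifted filtration $\widetilde{\mathcal{F}}_r:=\mathcal{F}_{t+r}$. Then $\widetilde{\mathcal{W}}$ is again a cylindrical Wiener process on $\mathbb{H}$, and its increments are independent of $\mathcal{F}_t$. I would then consider the two processes $Z_1(r):=y(t+r;\eta)$ and $Z_2(r):=\widetilde{y}(r;y(t;\eta))$, where $\widetilde{y}(\cdot;\zeta)$ denotes the unique strong solution to \eqref{I-I} on $[0,s]$ driven by $\widetilde{\mathcal{W}}$ with initial condition $\zeta$, adapted to $(\widetilde{\mathcal{F}}_r)_r$. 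Both processes solve the same SPDE on $[0,s]$ with the same $\mathcal{F}_t$-measurable initial datum $y(t;\eta)$ and the same driving noise $\widetilde{\mathcal{W}}$, so Corollary \ref{cor-pathwise-uniqueness} (applied on the shifted stochastic basis) gives $Z_1(s)=Z_2(s)$ $P$-a.s.

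The next step is the standard \emph{freezing} (or disintegration) argument: since $\widetilde{y}(s;\zeta)$ is measurable in $\zeta$ (continuity in $L^2(\Omega;H)$ follows from Lemma \ref{Lemma-V-stability}) and is built from $\widetilde{\mathcal{W}}$, which is independent of $\mathcal{F}_t$, while $y(t;\eta)$ is $\mathcal{F}_t$-measurable, for $\varphi\in\mathcal{C}_b(H)$ one obtains
\[
\E\bigl[\varphi(y(t+s;\eta))\,\big|\,\mathcal{F}_t\bigr]=\E\bigl[\varphi(\widetilde{y}(s;y(t;\eta)))\,\big|\,\mathcal{F}_t\bigr]=\bigl(\E\,\varphi(\widetilde{y}(s;x))\bigr)\Big|_{x=y(t;\eta)}=(P_s\varphi)(y(t;\eta)),
\]
where in the last equality I use that $\widetilde{y}(s;x)$ has the same law as $y(s;x)$ (both are strong solutions with deterministic initial datum $x$ driven by a cylindrical Wiener process, and pathwise uniqueness yields uniqueness in law). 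This is the Markov property.

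Finally, the semigroup identity $P_{t+s}=P_tP_s$ follows by taking expectations in the Markov property: for $\eta\in H$ and $\varphi\in\mathcal{C}_b(H)$,
\[
(P_{t+s}\varphi)(\eta)=\E[\varphi(y(t+s;\eta))]=\E\bigl[\E[\varphi(y(t+s;\eta))\mid\mathcal{F}_t]\bigr]=\E[(P_s\varphi)(y(t;\eta))]=(P_t(P_s\varphi))(\eta).
\]
The delicate point of the argument I expect to be the measurable (Borel) dependence $x\mapsto \widetilde{y}(s;x)$, together with the justification that $\widetilde{y}(s;y(t;\eta))$ can be computed by ``freezing'' the random initial datum; both are handled via Lemma \ref{Lemma-V-stability}, which yields Lipschitz-type continuity of the solution map in the initial condition, and a standard approximation of the $\mathcal{F}_t$-measurable variable $y(t;\eta)$ by simple random variables.
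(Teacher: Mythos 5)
Your proposal is correct and is essentially the argument the paper itself relies on: the paper gives no written proof of Proposition \ref{prop1}, simply appealing to \cite[Thm.~9.8 and Cor.~9.9]{Daprato}, whose proof is exactly the shift-of-the-noise / pathwise-uniqueness / freezing scheme you spell out. The only point to phrase more carefully is that Lemma \ref{Lemma-V-stability} controls $\Vert y_1-y_2\Vert_2^2$ only through the random weight $g$, so the dependence of $\widetilde{y}(s;\cdot)$ on the initial datum should be extracted as continuity in probability (as is done in the proof of Proposition \ref{prop-inva-1}) rather than as genuine Lipschitz continuity in $L^2(\Omega;H)$; this is still enough for the Borel measurability and the freezing lemma.
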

Now,	let	us	prove	the	following	results		from	which		the	proof	of	Theorem	\ref{THm-invariant-measure-1}	follows.
\begin{prop}\label{prop-inva-1}
	The	semigroup	$(P_t)_t$	is	bounded	and	  Feller,	that	is,	 if $(y_0^n)_n	\subset	H$		converges	strongly	to $y_0$ 	in $H$ then
	$\displaystyle\lim_{n\to	\infty}P_t\phi(y_0^n)=	P_t\phi(y_0)$.
\end{prop}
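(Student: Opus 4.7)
Boundedness of $(P_t)_t$ on $\mathcal{C}_b(H)$ is immediate from the definition \eqref{transition-prob}: for every $\varphi\in\mathcal{C}_b(H)$ and $y_0\in H$ one has $|P_t\varphi(y_0)|\le \|\varphi\|_\infty$, so $\|P_t\|_{L(\mathcal{C}_b(H))}\le 1$. The substantive content is the Feller property.

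The plan is to fix $\varphi\in\mathcal{C}_b(H)$, a sequence $(y_0^n)_n\subset H$ with $y_0^n\to y_0$ in $H$, and write $y_n(t):=y(t;y_0^n)$, $y(t):=y(t;y_0)$ for the unique strong solutions given by Theorem~\ref{exis-thm-strong} (all built on the same stochastic basis, as allowed since $y_0^n, y_0$ are deterministic). Since $\varphi$ is continuous and bounded on $H$, the dominated convergence theorem reduces the proof of $P_t\varphi(y_0^n)\to P_t\varphi(y_0)$ to the statement
\begin{equation*}
y_n(t)\longrightarrow y(t)\quad\text{in probability in }H,\quad \text{as } n\to\infty.
\end{equation*}

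To obtain this convergence I would apply the stability estimate of Lemma~\ref{Lemma-V-stability} with the roles $y_1:=y$ and $y_2:=y_n$, which is the crucial choice because then the weight
\begin{equation*}
g(t):=\exp\!\Bigl(-\tfrac{C_1^2}{\nu\epsilon_0}\int_0^t\|\nabla y(r)\|_{L^3}^2\,dr\Bigr)
\end{equation*}
depends only on the limit process $y$ and not on $n$. The lemma then gives, for every $t\in[0,T]$,
\begin{equation*}
\E\bigl[g(t)\,\|y_n(t)-y(t)\|_2^2\bigr]\le 2\,e^{2C(L)t}\,\|y_0^n-y_0\|_2^2\longrightarrow 0.
\end{equation*}
Hence $g(t)\|y_n(t)-y(t)\|_2^2\to 0$ in $L^1(\Omega)$, and in particular in probability.

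The last step — and the only place where some care is needed — is to pass from this weighted convergence to the desired unweighted convergence in probability in $H$. For this I would argue that $g(t)>0$ $P$-a.s.: from point (2) of Definition~\ref{strongsol-def} one has $y\in L^4(\Omega\times(0,T);X)$, and the embedding $W^{1,4}_0(D)\hookrightarrow W^{1,3}_0(D)$ (valid for bounded $D\subset\mathbb{R}^d$ with $d=2,3$) yields $\int_0^T\|\nabla y(r)\|_{L^3}^2\,dr<\infty$ $P$-a.s., so $g(t)>0$ almost surely. Consequently, for any $\delta,\eta>0$ one can split $\{\|y_n(t)-y(t)\|_2>\eta\}$ into $\{g(t)<\delta\}$ and $\{g(t)\ge \delta\}$; the first set has small probability uniformly in $n$ by choosing $\delta$ small, and on the second set $\|y_n(t)-y(t)\|_2^2\le \delta^{-1}g(t)\|y_n(t)-y(t)\|_2^2\to 0$ in $L^1(\Omega)$. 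This gives $y_n(t)\to y(t)$ in probability in $H$, and the continuous mapping theorem together with boundedness of $\varphi$ yields $\E[\varphi(y_n(t))]\to\E[\varphi(y(t))]$, which is the Feller property. The main (and essentially only) obstacle is handling this random weight $g$; the reduction to a weight independent of $n$ by taking $y_1=y$ in the stability lemma is what makes the argument work.
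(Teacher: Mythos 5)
Your proposal is correct and follows essentially the same route as the paper: both reduce the Feller property to the stability estimate of Lemma~\ref{Lemma-V-stability} applied with the weight $g$ built from the limit solution $y$ (hence independent of $n$), deduce weighted $L^1(\Omega)$ convergence of $\|y_n(t)-y(t)\|_2^2$, remove the a.s.\ positive weight, and conclude by dominated convergence for the bounded continuous $\varphi$. Your splitting argument on $\{g(t)<\delta\}$ versus $\{g(t)\ge\delta\}$ is just a slightly more explicit way of discarding the weight than the paper's extraction of an a.s.\ convergent subsequence, and your justification that $g(t)>0$ a.s.\ via $y\in L^4(\Omega\times(0,T);X)$ makes precise a point the paper leaves implicit.
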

\begin{proof}
		Let		$\phi\in	\mathcal{C}_b(H)$	and		$0<t\leq	T<\infty$,	$y_0\in	H$.		First,	it	is	clear	that	$P_t\phi:H\to		 \mathbb{R}$	is	bounded,	see	\eqref{transition-prob}.	
	Let	$\{y_0^n\}_n$	be	a	$H$-valued	sequence		such	that	$y_0^n$	converges	strongly	to	$y_0$	in	$H$	and
		denote	by	$y_n$	and	$y$	the		strong	solution	of		\eqref{I-I}		with	initial	data	$y_0^n$	and	$y_0$,	respectively.	Thanks	to	Lemma	\ref{Lemma-V-stability},	we	get
		\begin{align*}
		&\E	e^{-\frac{C_1^2}{\nu\epsilon_0}\int_0^t\Vert	\nabla	y(r)\Vert_{L^3}^2dr}\Vert y_n-y\Vert_2^2(t)\leq	2\Vert y_0^n-y_0\Vert_2^2e^{C(L)t}	.	\end{align*}
	Hence
$$
e^{-\frac{C_1^2}{\nu\epsilon_0}\int_0^t\Vert	\nabla	y(r)\Vert_{L^3}^2dr}\Vert y_n-y\Vert_2^2(t)\to 0 \quad \text{in } L^1(\Omega).
$$	
Then, there exists a subsequence,	denoted	by	the	same	way,	such	that
$$
\Vert y_n(t)-y(t)\Vert_2^2\to 0\quad  P-a.s, \quad \text{as } n\to\infty,
$$ 
 since the weight	is	positive	and does not depend on $n$.
					Thus,	$\phi(y_n(t))$	converges	to	$\phi(y(t))$	$P$-a.s.
								and	Lebesgue	dominated	convergence	theorem	ensures		
		\begin{align*}
		\phi(y_n(t))	\to	\phi(y(t))	\text{	in	}	L^1(\Omega)	\text{	for	any		}
	t\in[0,T].		\end{align*}
	Thus
	\begin{align*}
		\vert	P_t(\phi(y_0^n)-P_t(\phi(y_0))	\vert	\leq	\E\vert	\phi(y_n(t))-\phi(y(t))		\vert	\to	0	\text{	as		}
	n\to	\infty.	\end{align*}
		The uniqueness	of	the	solution	of	\eqref{I-I} ensures	the 	convergence	of	the	whole sequence	$(y_n(t))_n$,	 which completes the proof.
\end{proof}
\begin{prop}\label{prop-invar-2}
	Let	$y_0\in	H$	and	let	$y(t),t\geq0,$	be	the	unique	strong	solution	to	\eqref{I}	with	initial	data	$y_0$.	Then,	there	exists	$T_0\geq0$	such	that	for	every	$\epsilon>0$	there	exists	$R>0$	such	that
	\begin{align}\label{equ-invarinat}
\sup_{T\geq	T_0}\mu_T(H/\mathbb{B}_R)=	\sup_{T\geq	T_0}\dfrac{1}{T}\int_0^T(P_s^*\delta_{y_0})(H/\mathbb{B}_R)ds\leq	\epsilon,
	\end{align}
	where	$\mathbb{B}_R=\{v\in	V:	\Vert	v\Vert_V\leq	R\}.$		In	other	words,	the	set	$\{	\mu_n=\displaystyle\dfrac{1}{n}\int_0^{n}	P_s^*\delta_{y_0}	ds;	\hspace*{0.12cm}	n\in	\mathbb{N}^*\}$	is	tight.
\end{prop}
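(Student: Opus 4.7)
The plan is to establish tightness of $\{\mu_T\}_{T\geq T_0}$ on $H$ by exploiting the compact embedding $V \hookrightarrow H$ (Rellich--Kondrachov, cf.~\eqref{Sobolev-embedding}), which renders closed $V$-balls $\mathbb{B}_R$ relatively compact in $H$. Chebyshev's inequality then gives
\begin{equation*}
\mu_T(H\setminus \mathbb{B}_R) = \frac{1}{T}\int_0^T P(\Vert y(s;y_0)\Vert_V > R)\, ds \leq \frac{1}{R^2}\cdot \frac{1}{T}\int_0^T \E\Vert y(s;y_0)\Vert_V^2\, ds,
\end{equation*}
so the claim reduces to producing a Cesaro-average bound $\sup_{T\geq T_0}\frac{1}{T}\int_0^T \E\Vert y(s)\Vert_V^2\, ds \leq C$ uniformly in $T$.

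First I would apply the infinite-dimensional It\^o formula to $\Vert y(t)\Vert_2^2$ (justified by the regularity $y\in L^4(\Omega\times(0,T);X)$, cf.~Remark~\ref{Prog-meas}) and reproduce the energy computation used in the proof of Theorem~\ref{Thm-app}: the $\alpha$-term is absorbed via Young's inequality and the structural condition $|\alpha|\leq \sqrt{2\nu\beta}$, producing the gain factor $\epsilon_0$; the forcing $\langle F, y\rangle$ is controlled via Korn's inequality \eqref{Korn-ineq} and Young's inequality by $\tfrac{\beta\epsilon_0}{4}\Vert A(y)\Vert_4^4 + C\Vert F\Vert_{X^\prime}^{4/3}$. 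Taking expectations (the martingale part vanishes) and using \eqref{noise1-2} to bound $\sum_\k \Vert \theta_\k(y)\Vert_2^2 \leq L\Vert y\Vert_2^2$ yields
\begin{equation*}
\E\Vert y(t)\Vert_2^2 + 2\nu\epsilon_0\int_0^t \E\Vert \nabla y\Vert_2^2\, ds + \tfrac{\beta\epsilon_0}{2}\int_0^t \E\Vert A(y)\Vert_4^4\, ds \leq \Vert y_0\Vert_2^2 + C_F\, t + L\int_0^t \E\Vert y\Vert_2^2\, ds.
\end{equation*}

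The critical step is to dispose of the remainder $L\int_0^t \E\Vert y\Vert_2^2\, ds$ without destroying $T$-uniformity. I would route it through the higher-order dissipation $\Vert A(y)\Vert_4^4$ using the chain $\Vert y\Vert_2 \leq |D|^{1/4}\Vert y\Vert_4 \leq C\Vert \nabla y\Vert_4 \leq C'\Vert A(y)\Vert_4$ (Poincar\'e in $W^{1,4}_0$ combined with \eqref{Korn-ineq}) together with Young's inequality
\begin{equation*}
L\Vert y\Vert_2^2 \leq L(C')^2\Vert A(y)\Vert_4^2 \leq \tfrac{\beta\epsilon_0}{4}\Vert A(y)\Vert_4^4 + \frac{L^2(C')^4}{\beta\epsilon_0}.
\end{equation*}
After this absorption the energy inequality simplifies to
\begin{equation*}
\E\Vert y(t)\Vert_2^2 + 2\nu\epsilon_0\int_0^t \E\Vert \nabla y\Vert_2^2\, ds \leq \Vert y_0\Vert_2^2 + C_*\, t,
\end{equation*}
with $C_*$ depending only on $F$, $L$, $\nu$, $\beta$, $\epsilon_0$. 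Dividing by $T$, discarding the nonnegative first term, and using Poincar\'e's inequality $\Vert y\Vert_V^2 \leq (1+C_P^2)\Vert \nabla y\Vert_2^2$, one obtains $\frac{1}{T}\int_0^T \E\Vert y(s)\Vert_V^2\, ds \leq \frac{(1+C_P^2)\Vert y_0\Vert_2^2}{2\nu\epsilon_0\, T} + C'$. Choosing $T_0$ so that the first summand is dominated by $C'$ for $T\geq T_0$ and then picking $R$ with $2C'/R^2 \leq \epsilon$ closes the argument via the Chebyshev bound above.

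The main obstacle is precisely the absorption of the noise-induced term $L\int_0^t \E\Vert y\Vert_2^2\, ds$: a plain stochastic Gronwall would produce exponential growth in $t$, incompatible with a $T$-uniform Cesaro estimate. Routing it through the $W^{1,4}_0$-Poincar\'e--Korn chain (into the $\Vert A(y)\Vert_4^4$ dissipation) rather than through the classical $H^1_0$-Poincar\'e (into $\Vert \nabla y\Vert_2^2$) avoids imposing any smallness assumption on $L$ relative to $\nu\epsilon_0$, at the cost of a worse but $T$-independent constant. Alternatively, an exponential-weight trick in the spirit of Lemma~\ref{Lemma-V-stability} could tame the Gronwall factor, but the direct absorption above is cleaner in this Cesaro-average setting.
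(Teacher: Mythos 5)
Your proposal is correct and follows essentially the same route as the paper: Itô's formula for $\Vert y\Vert_2^2$, absorption of the $\alpha$-term via the structural condition to gain the factor $\epsilon_0$, control of the forcing by Korn and Young, and—crucially—absorption of the noise-induced term $L\int_0^t\E\Vert y\Vert_2^2\,ds$ into the quartic dissipation $\Vert A(y)\Vert_4^4$ via a Poincaré--Korn--Young chain at the cost of a $T$-linear constant, followed by Chebyshev and the compactness of $V$-balls in $H$. The only difference is cosmetic (the paper interpolates $\Vert\nabla y\Vert_2^2$ against $\Vert\nabla y\Vert_4^4$ with a free parameter $\theta$, whereas you bound $\Vert y\Vert_2^2$ directly by $\Vert A(y)\Vert_4^2$ before applying Young), so no further comment is needed.
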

\begin{proof}
First,	we	recall	that	the	embedding	$V	\hookrightarrow	H$	is	compact	and	therefore	$\mathbb{B}_R$	is	 relatively	compact	in	$H$.
	Let	$0<t\leq	T$,	by
	applying	It\^o	formula	(see	\textit{	e.g.}	\cite[Theorem 4.2.5]{Liu-Rock})	with	$u\mapsto\Vert	u\Vert_{2}^2$	for	the	solution	of	\eqref{I},	we	get
	\begin{align}\label{Ito-y}
	\Vert	y(t)\Vert_2^2-\Vert	y_0\Vert_2^2&=2\nu\int_0^t\langle	\Delta	y,y\rangle	ds+2\int_0^t\langle	\alpha\text{div}(A^2),y\rangle	ds+2\int_0^t\langle	\beta \text{div}(|A|^2A),y\rangle	ds\notag\\
	&\quad+2\int_0^t\langle	F,y\rangle		ds+2\int_0^t(	G(\cdot,y),y)	d\mathcal{W}+	\int_0^t\sum_{\k\geq 1} \Vert	\sigma_{\k}(\cdot,y)\Vert_2^2ds.
	\end{align}
	Note	that	$(\displaystyle\int_0^t(	G(\cdot,y),y)	d\mathcal{W})_{0\leq	t\leq	T}$	is	$\mathcal{F}_t$-martingale	thus	$\E\displaystyle\int_0^t(	G(\cdot,y),y)	d\mathcal{W}=0$,	by	taking	the	expectation	and	using	\eqref{noise1},	we	get
	\begin{align}\label{est-invariant1}
	\E\Vert	y(t)\Vert_2^2-\Vert	y_0\Vert_2^2&\leq	2\E\int_0^t\langle	\nu\Delta	y+\alpha\text{div}(A^2)+\beta \text{div}(|A|^2A)+F,y\rangle	ds	+L\E	\int_0^t \Vert	y\Vert_2^2ds.
	\end{align}
	Integration	by	parts	ensures
	\begin{align}
	&2\int_0^t\langle	\nu\Delta	y+\alpha\text{div}(A^2)+\beta \text{div}(|A|^2A),y\rangle	ds\notag\\
	&=-\nu \int_0^t\Vert A( y)\Vert_{2}^2ds-2\alpha\int_0^t(A( y)^2,\nabla y)ds -\beta \int_0^t\int_D|A( y)|^4dxdt\notag\\
	&\leq		-2\nu \int_0^t\Vert \nabla y\Vert_{2}^2ds-\beta\int_0^t\int_D|A( y)|^4dxds+2\vert\alpha\vert\int_0^t\Vert	A(y)\Vert_4^2\Vert\nabla	y\Vert_2ds.
	\label{est-invar-2}\end{align}
		Since	$\epsilon_0=1-\sqrt{\dfrac{\alpha^2}{2\nu\beta}}\in]0,1[$,	we	get
	\begin{align}\label{est-invar-3}
	2\vert\alpha\vert\int_0^t\Vert	A(y)\Vert_4^2\Vert\nabla	y\Vert_2ds\leq	2\nu(1-\epsilon_0)\int_0^t\Vert \nabla y\Vert_{2}^2ds+\beta(1-\epsilon_0)\int_0^t\int_D|A( y)|^4dxds.
	\end{align}
	From	\eqref{est-invariant1},	\eqref{est-invar-2}	and	\eqref{est-invar-3},	we	infer	that	
	\begin{align*}
	\E\Vert	y(t)\Vert_2^2-\Vert	y_0\Vert_2^2&\leq	
	-2\nu\epsilon_0 \int_0^t\E\Vert \nabla y\Vert_{2}^2ds-\beta	\epsilon_0\int_0^t\E\int_D|A( y)|^4dxds	+L	\int_0^t \E\Vert	y\Vert_2^2ds+2\int_0^t\langle	F,y	\rangle	ds
	\end{align*}
	By	using	Poincar\'e's	inequality,	there	exists	$C_{P}>0$	such	that
	\begin{align}\label{poicare}
	\Vert	y(s)\Vert_2^2\leq	C_{P}\Vert \nabla y(s)\Vert_{2}^2	\text{	for	almost	all	}	s\in[0,t].
	\end{align}
	Recall	the	Korn	inequality	(see	\textit{e.g.}	\cite[Thm.	1.33]{Roubicek}),	there	exists	$C_K>0$	such	that	
	\begin{align}\label{Korn-inequlity}
	\Vert	y\Vert_{W^{1,4}_0}	\leq	C_K\Vert	\dfrac{\nabla	y+\nabla	y^T}{2}\Vert_4=	\dfrac{1}{2}C_K\Vert	A( y)\Vert_4
	\end{align}
	By	using	\eqref{poicare},	\eqref{Korn-inequlity}	and	Holder	inequality,	we	obtain
	\begin{align*}
	\Vert	y(s)\Vert_2^2&\leq	C_{P}\Vert \nabla y(s)\Vert_{2}^2\leq	C_{P}[\dfrac{1}{2\theta}\vert	D\vert+\dfrac{\theta}{2}\Vert	y(s)\Vert_{W^{1,4}_0}^4],\quad	\forall	\theta>0\\
	&\leq	C_{P}\dfrac{1}{2\theta}\vert	D\vert+C_{P}(C_K)^4\dfrac{\theta}{2^5}\Vert	A( y)\Vert_4^4,
	\end{align*}
	where	$\vert	D\vert$	denotes	the	Lebesgue	measure	of	the	domain	$D$.
	In	addition,	thanks	to	Poincaré	and	Korn	inequalities,	there	exists	$C_F>0$	such	that
	\begin{align}\label{exter-estim}
	2\int_0^t\langle	F,y	\rangle	ds	\leq	C\int_0^t\Vert	F\Vert_{X^\prime}\Vert	y\Vert_{W^{1,4}_0}ds	\leq	\dfrac{C_F}{(\beta	\epsilon_0)^{1/3}}t\Vert	F\Vert_{X^\prime}^{\frac{4}{3}}+\dfrac{\beta	\epsilon_0}{2}\int_0^t\Vert	A(y)\Vert_{4}^4ds,
	\end{align}
	where	we	used	Young	inequality	to	deduce	the	last	inequality.	Therefore
		\begin{align}\label{est-invariant2}
	\E\Vert	y(t)\Vert_2^2+2\nu\epsilon_0 \int_0^t\E\Vert \nabla y\Vert_{2}^2ds\leq	\Vert	y_0\Vert_2^2
	&+(LC_{P}(C_K)^4\dfrac{\theta}{2^5}-\dfrac{\beta	\epsilon_0}{2})\int_0^t\E\int_D|A( y)|^4dxds\notag\\	&+L	 C_{P}\dfrac{1}{2\theta}\vert	D\vert	t+\dfrac{C_F}{(\beta	\epsilon_0)^{1/3}}t\Vert	F\Vert_{X^\prime}^{\frac{4}{3}}.	\end{align}
	Thanks	to	Poincaré	inequality	\eqref{poicare},	one	has
$
		\Vert	y\Vert_V^2\leq	(C_P+1)\Vert	\nabla	y\Vert_2^2.
$
	Choose		$\theta=\dfrac{2^3\beta\epsilon_0}{LC_P(C_K)^4}$	and	using	\eqref{poicare},	we	obtain	
	\begin{align}\label{concentration-eqn}
	\E\Vert	y(t)\Vert_2^2&+\dfrac{2\nu\epsilon_0}{C_P+1} \int_0^t\E\Vert  y(s)\Vert_{V}^2ds+\dfrac{\beta	\epsilon_0}{4}\int_0^t\E\int_D|A( y)|^4dxds\notag\\&\quad¨\leq	\Vert	y_0\Vert_2^2
	+\dfrac{(L	 C_{P})^2(C_K)^4}{2^4\beta\epsilon_0}\vert	D\vert	t+\dfrac{C_F}{(\beta	\epsilon_0)^{1/3}}\Vert	F\Vert_{X^\prime}^{\frac{4}{3}}t.	\end{align}
	Therefore
	\begin{align}\label{est-invariant-final}
	\int_0^t\E\Vert  y(s)\Vert_{V}^2ds&\leq	\dfrac{C_P+1}{2\nu\epsilon_0} \Vert	y_0\Vert_2^2
	+\dfrac{C_P+1}{2\nu\epsilon_0} \big[\dfrac{(L	 C_{P})^2(C_K)^4}{2^4\beta\epsilon_0}\vert	D\vert	t+\dfrac{C_F}{(\beta	\epsilon_0)^{1/3}}\Vert	F\Vert_{X^\prime}^{\frac{4}{3}}t\big],\quad	\forall	t\in[0,T].	\end{align}
	Finally,	let	$R>0$	and	note	that,	after	using	Chebyshev	inequality	and	\eqref{est-invariant-final},	we	deduce
	\begin{align*}
	\dfrac{1}{T}\int_0^T(P_s^*\delta_{y_0})(H/\mathbb{B}_R)ds&=\dfrac{1}{T}\int_0^TP(\Vert	y(s)\Vert_V>R)ds\leq	\dfrac{1}{TR^2}\int_0^T\E\Vert	y(s)\Vert_V^2ds\\
	&\leq\dfrac{1}{TR^2}\dfrac{C_P+1}{2\nu\epsilon_0} \Vert	y_0\Vert_2^2
	+\dfrac{C_P+1}{R^2} \dfrac{L^2(	 C_{P})^2(C_K)^4}{2^5\beta\nu\epsilon_0^2}\vert	D\vert+\dfrac{1}{R^2}\dfrac{(C_P+1)C_F}{2\nu\epsilon_0(\beta	\epsilon_0)^{1/3}}\Vert	F\Vert_{X^\prime}^{\frac{4}{3}}.		
	\end{align*}
	Finally,	choose	any	$T_0>0$	and	$R:=R(\nu,\beta,\alpha,	C_K,C_P,C_F,L,\vert	D\vert,\Vert	y_0\Vert_2)	>0$	large	enough	to	obtain	\eqref{equ-invarinat}.	\end{proof}
Consequently,			Proposition	\ref{prop-inva-1},	Proposition		\ref{prop-invar-2},	\textit{"Krylov-Bogoliubov Theorem"}	(see	\textit{e.g.}	\cite[Theorem 3.1.1]{Daprato-ergodicity}	and		\cite[Corollary 3.1.2]{Daprato-ergodicity}	ensure	the	existence	of			an	invariant	measure	and	therefore	completes	the	proof	of	Theorem	\ref{THm-invariant-measure-1}.	Namely,
\begin{theorem}\label{THm-invariant-measure}	Assume that  $\overline{\mathcal{H}}_1$	holds.
	Then, there exists an invariant measure	$\mu\in\mathcal{P}(H)$,	the	set	of	Borel	probability	measure	on	$H$,	for	$(P_t)_t$	defined	by	\eqref{transition-prob}.	In	other	words,	$P_t^*\mu=\mu$		where	$(P_t^*)_t$	denotes	 the adjoint semi-group
	acting on	$\mathcal{P}(H)$	given	by
	\begin{align}\label{semi-group-adjoint}
	P_t^*\mu(\Gamma)=\int_HP_t(x,\Gamma)\mu(dx)	\text{	with	}	P_t(y_0,\Gamma):=P(u(t,y_0)\in\Gamma)	\text{	for	any	}	\Gamma\in\mathcal{B}(H).
	\end{align}
\end{theorem}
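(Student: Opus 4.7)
The plan is a direct application of the Krylov--Bogoliubov procedure, relying on the two structural results already established: the Feller property of $(P_t)_{t\ge 0}$ (Proposition \ref{prop-inva-1}) and the tightness of the time-averaged laws (Proposition \ref{prop-invar-2}). Fix an arbitrary initial datum $y_0\in H$ and recall that $P_s^*\delta_{y_0}$ is the law of the solution $y(s;y_0)$ of \eqref{I-I} at time $s$.

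First, I would define the family of Cesàro averages of the transition laws,
\begin{equation*}
\mu_n := \frac{1}{n}\int_0^n P_s^*\delta_{y_0}\,ds, \qquad n\in\mathbb{N}^*,
\end{equation*}
understood in the weak sense (i.e.\ $\int_H \varphi\,d\mu_n = \frac{1}{n}\int_0^n (P_s\varphi)(y_0)\,ds$ for every $\varphi\in\mathcal{C}_b(H)$). Proposition \ref{prop-invar-2} states precisely that $\{\mu_n\}_n$ is tight on $H$ (because the sublevel sets $\mathbb{B}_R$ of $\Vert\cdot\Vert_V$ are relatively compact in $H$ by the compact embedding $V\hookrightarrow H$, and the estimate \eqref{equ-invarinat} controls the escape mass uniformly in $n$). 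Prokhorov's theorem therefore yields a subsequence $(\mu_{n_k})_k$ and a Borel probability measure $\mu\in\mathcal{P}(H)$ such that $\mu_{n_k}\rightharpoonup \mu$ weakly as $k\to\infty$.

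Next, I would verify that this limit $\mu$ is invariant, i.e.\ $P_t^*\mu=\mu$ for every $t\ge 0$. The standard argument runs as follows: for any $\varphi\in\mathcal{C}_b(H)$ and $t\ge 0$, by the Markov property (Proposition \ref{prop1}) and Fubini's theorem,
\begin{equation*}
\int_H P_t\varphi\,d\mu_n = \frac{1}{n}\int_0^n (P_{s+t}\varphi)(y_0)\,ds = \int_H \varphi\,d\mu_n + \frac{1}{n}\Big(\int_n^{n+t}(P_s\varphi)(y_0)\,ds - \int_0^t (P_s\varphi)(y_0)\,ds\Big).
\end{equation*}
Since $\varphi$ is bounded, the remainder is $O(1/n)$ and vanishes as $n\to\infty$. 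The Feller property of $(P_t)_t$ (Proposition \ref{prop-inva-1}) ensures $P_t\varphi\in\mathcal{C}_b(H)$, so passing to the weak limit along $n_k$ on both sides gives $\int_H P_t\varphi\,d\mu = \int_H\varphi\,d\mu$. As $\varphi$ was arbitrary in $\mathcal{C}_b(H)$, this is exactly $P_t^*\mu=\mu$ in the sense of \eqref{semi-group-adjoint}.

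The two main obstacles are already handled upstream: tightness of the Cesàro averages (which required the energy-type estimate \eqref{est-invariant-final} and the restriction $\epsilon_0\in\,]0,1[$ to close the nonlinear cubic term $\beta\,\mathrm{div}(|A|^2A)$), and the Feller property (which required the stability Lemma \ref{Lemma-V-stability} together with the random exponential weight $g(t)$ to absorb the supercritical term $b(y,y_1,y)$). The present proof is therefore purely an invocation of Krylov--Bogoliubov combined with these two ingredients, as stated in, e.g., \cite[Theorem 3.1.1 and Corollary 3.1.2]{Daprato-ergodicity}.
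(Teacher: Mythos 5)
Your proposal is correct and follows exactly the route the paper takes: the paper also obtains Theorem \ref{THm-invariant-measure} as an immediate consequence of the Feller property (Proposition \ref{prop-inva-1}), the tightness of the Ces\`aro averages (Proposition \ref{prop-invar-2}), and the Krylov--Bogoliubov theorem as cited in \cite{Daprato-ergodicity}. The only difference is that you unpack the standard Krylov--Bogoliubov argument (Prokhorov extraction plus the shift computation showing invariance of the weak limit) where the paper simply cites it; the content is the same.
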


Let	us	present	the	following	concentration	property	of	the	invariant	measures	for	the	semigroup	$(P_t)_t$,	which	will	play	a	fundamental	role	to	prove	the	existence	of	an	ergodic	invariant	measure	in	Theorem	\ref{Theorem-ergodic}.	
\begin{prop}\label{prop-ergodic}
Let			$\mu$	be	an		invariant	measure	for	the	semigroup	$(P_t)_t$	defined	by	\eqref{transition-prob}.	Then
\begin{align}\label{prperty-invariant}
	\displaystyle\int_H\Vert	x\Vert_2^2\mu(dx)	\leq		
	\mathbf{K}\dfrac{C_P}{2\nu\epsilon_0}	\text{	and		}	\displaystyle\int_H\Vert	x\Vert_X^4\mu(dx)	\leq	\mathbf{K}\dfrac{\beta	\epsilon_0}{2(C_K)^4}[
	\dfrac{C_P}{2\nu\epsilon_0}+1],	
\end{align}	
where	$\mathbf{K}=\dfrac{(L	 C_{P})^2(C_K)^4}{2^4\beta\epsilon_0}\vert	D\vert	+\dfrac{C_F}{(\beta	\epsilon_0)^{1/3}}\Vert	F\Vert_{X^\prime}^{\frac{4}{3}}$,	$C_F>0$,	$C_P$	and	$C_K$	are	related	to	Poincaré	and	Korn	inequalities,	see	\eqref{poicare}	and	\eqref{Korn-ineq}.
\end{prop}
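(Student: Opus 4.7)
The plan is to leverage the energy estimate from Proposition \ref{prop-invar-2} --- specifically \eqref{concentration-eqn} --- and convert time-averaged bounds on trajectories into spatial moment bounds against $\mu$ by exploiting the stationarity identity $P_t^{*}\mu = \mu$.

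For the first inequality I would rewrite the dissipative $\Vert\nabla y\Vert_2^2$ contribution appearing before \eqref{concentration-eqn} using Poincaré's inequality \eqref{poicare} so that it controls $\Vert y\Vert_2^2$ directly. Starting from the pre-Poincaré form of the energy estimate, this yields
\begin{equation*}
\frac{2\nu\epsilon_0}{C_P}\int_0^t \mathbb{E}\Vert y(s;y_0)\Vert_2^2\,ds \;\leq\; \Vert y_0\Vert_2^2 + \mathbf{K}\,t.
\end{equation*}
Dividing by $t$, integrating against $\mu(dy_0)$, and invoking Fubini together with the stationarity identity $\int_H \mathbb{E}[\varphi(y(s;y_0))]\,\mu(dy_0) = \int_H \varphi\,d\mu$ applied to the truncated bounded continuous test functions $\varphi_M(x) := \Vert x\Vert_2^2 \wedge M$, then sending $M\to\infty$ by monotone convergence and $t\to\infty$, produces the claimed bound $\int_H \Vert x\Vert_2^2\,\mu(dx) \leq \mathbf{K}\,C_P/(2\nu\epsilon_0)$.

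For the second inequality I would isolate the quartic dissipative term $\frac{\beta\epsilon_0}{4}\int_0^t\mathbb{E}\int_D|A(y)|^4\,dx\,ds$ in \eqref{concentration-eqn} and combine it with Korn's inequality \eqref{Korn-inequlity} in the form $\int_D |A(y)|^4\,dx \geq (2/C_K)^4\,\Vert y\Vert_X^4$ to obtain
\begin{equation*}
\frac{\beta\epsilon_0}{4}\left(\frac{2}{C_K}\right)^{4}\int_0^t \mathbb{E}\Vert y(s;y_0)\Vert_X^4\,ds \;\leq\; \Vert y_0\Vert_2^2 + \mathbf{K}\,t.
\end{equation*}
Dividing by $t$, integrating against $\mu(dy_0)$, applying invariance with the truncated test functions $\Vert x\Vert_X^4 \wedge M$ (bounded and lower semicontinuous on $H$, finite only on $X$), and then substituting the previously established bound $\int_H \Vert y_0\Vert_2^2\,\mu(dy_0) \leq \mathbf{K}\,C_P/(2\nu\epsilon_0)$ yields a two-term estimate whose bracket is of the shape $[1/t + 1]$. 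Choosing $t$ of order unity (e.g.\ $t=1$) reproduces the claimed factor $[C_P/(2\nu\epsilon_0) + 1]$.

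The principal technical obstacle is making the stationarity identity rigorous for functionals that are \emph{not} bounded and continuous on $H$: $\Vert\cdot\Vert_2^2$ is continuous but unbounded, while $\Vert\cdot\Vert_X^4$ is only lower semicontinuous on $H$ and takes the value $+\infty$ outside $X$. Truncation by $\wedge M$ reduces to bounded continuous (resp.\ bounded lower semicontinuous) test functions to which invariance applies in integrated form, after which monotone convergence transfers the bound back to the original functional. An equivalent and in some ways cleaner route is to perform the same manipulations directly on the Krylov--Bogoliubov averages $\mu_n := \frac{1}{n}\int_0^n P_s^{*}\delta_{y_0}\,ds$ from Proposition \ref{prop-invar-2}: the above calculations bound $\int_H \Vert x\Vert_2^2\,\mu_n(dx)$ and $\int_H \Vert x\Vert_X^4\,\mu_n(dx)$ uniformly in $n$, and the bounds survive the weak convergence $\mu_{n_k}\rightharpoonup \mu$ by the lower semicontinuity of $\Vert\cdot\Vert_2^2$ and $\Vert\cdot\Vert_X^4$ on $H$.
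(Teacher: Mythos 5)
There is a genuine gap in your argument for the first inequality. After dividing the energy estimate by $t$ and integrating against $\mu(dy_0)$, the right-hand side contains the term $\frac{1}{t}\int_H\Vert y_0\Vert_2^2\,\mu(dy_0)$. You cannot send $t\to\infty$ to discard it unless you already know that $\int_H\Vert y_0\Vert_2^2\,\mu(dy_0)<\infty$ --- which is precisely the conclusion you are trying to reach; if that integral is infinite, your inequality reads $\int_H\varphi_M\,d\mu\leq+\infty$ for every $t$ and every truncation level $M$, and carries no information. Truncating the test function to $\Vert x\Vert_2^2\wedge M$ only caps the left-hand side; the untruncated $\Vert y_0\Vert_2^2$ on the right is the problem. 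The paper's device is designed exactly to avoid this: it applies It\^o's formula to the bounded concave function $f_\epsilon(x)=x/(1+\epsilon x)$ composed with $\Vert y\Vert_2^2$, so that the initial and terminal terms $\int_H f_\epsilon(\Vert y_0\Vert_2^2)\,d\mu$ and $\int_H\E f_\epsilon(\Vert y(t)\Vert_2^2)\,d\mu$ are both finite and cancel exactly by invariance of $\mu$, while concavity ($f_\epsilon''<0$) makes the It\^o correction sign-favourable and $f_\epsilon'\leq1$ keeps the stochastic integral a true martingale. What survives is only the dissipation term, bounded by $\mathbf{K}t$, and letting $\epsilon\to0$ with monotone convergence yields \eqref{prperty-invariant-1}. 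In short, the truncation must happen inside the It\^o computation along the trajectory, not afterwards at the level of the stationarity identity. Your treatment of the second inequality is then essentially the paper's (truncate $\Vert\cdot\Vert_X^4$, use invariance over a unit time window, feed in the first bound) and is fine once the first bound is actually available.

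Your fallback via the Krylov--Bogoliubov averages $\mu_n=\frac{1}{n}\int_0^nP_s^*\delta_{y_0}\,ds$ does give moment bounds uniform in $n$ that survive the weak limit by lower semicontinuity, but it only establishes the estimate for the particular invariant measure obtained as a limit point of these averages. Proposition \ref{prop-ergodic} asserts the bound for an \emph{arbitrary} invariant measure of $(P_t)_t$, and this generality is what Theorem \ref{Theorem-ergodic} uses: the tightness of the entire set $\Lambda$ of invariant measures, needed for the Krein--Milman argument producing an ergodic extreme point, requires \eqref{prperty-invariant} to hold for all of them, not just the constructed one.
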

\begin{proof}
	Let	$y_0\in	H$	and	let	us	consider		the	following	function		$f_\epsilon:x\mapsto	\dfrac{x}{1+\epsilon	x},	\epsilon>0	$.	Note	that	$f_\epsilon\in		C^2_b(\mathbb{R}_+)$	satisfying
	\begin{align*}
x\in	\mathbb{R}_+:\quad	f_\epsilon^\prime(x)=\dfrac{1}{(1+\epsilon	x)^2}>0,	\quad	f_\epsilon^{\prime\prime}(x)=-\dfrac{2\epsilon}{(1+\epsilon	x)^3}<0.
	\end{align*}
Let	$\epsilon>0$,	by	applying	It\^o	formula	to	the	process	$\Vert	y\Vert_2^2$	given	by	\eqref{Ito-y}	and	the	function	$f_\epsilon$,	we	get
	\begin{align*}
	f_\epsilon(\Vert	y(t)\Vert_2^2)-f_\epsilon(\Vert	y_0\Vert_2^2)&=2\nu\int_0^tf_\epsilon^\prime(\Vert	y(s)\Vert_2^2)\langle	\Delta	y,y\rangle	ds+2\int_0^tf_\epsilon^\prime(\Vert	y(s)\Vert_2^2)\langle	\alpha\text{div}(A^2),y\rangle	ds\\
	&\quad+2\int_0^tf_\epsilon^\prime(\Vert	y(s)\Vert_2^2)\langle	\beta \text{div}(|A|^2A),y\rangle	ds+2\int_0^tf_\epsilon^\prime(\Vert	y(s)\Vert_2^2)\langle	F,y\rangle		ds\\&\quad+2\int_0^tf_\epsilon^\prime(\Vert	y(s)\Vert_2^2)(	G(\cdot,y),y)	d\mathcal{W}+	\int_0^tf_\epsilon^\prime(\Vert	y(s)\Vert_2^2)\sum_{\k\geq 1} \Vert	\sigma_{\k}(\cdot,y)\Vert_2^2ds\\
	&\quad+2\int_0^tf_\epsilon^{\prime\prime}(\Vert	y(s)\Vert_2^2)\sum_{\k\geq 1} \vert	(\sigma_{\k}(\cdot,y),y)\vert_2^2ds.
	\end{align*}
	Since	$f^{\prime\prime}_\epsilon<0$,	then	the	last	term	is	non	positive.	On	the	other	hand,
	by	using	that	$f^\prime_\epsilon>0$,		\eqref{est-invar-2}	and	\eqref{est-invar-3}	and	\eqref{exter-estim},	we	get
\begin{align*}
f_\epsilon(\Vert	y(t)\Vert_2^2)-f_\epsilon(\Vert	y_0\Vert_2^2)&\leq	-2\nu\epsilon_0\int_0^tf_\epsilon^\prime(\Vert	y(s)\Vert_2^2)\Vert \nabla y\Vert_{2}^2	ds-\dfrac{\beta	\epsilon_0}{2}\int_0^tf_\epsilon^\prime(\Vert	y(s)\Vert_2^2)	\int_D|A( y)|^4dxds\\&+		\dfrac{C_F}{(\beta	\epsilon_0)^{1/3}}t\Vert	F\Vert_{X^\prime}^{\frac{4}{3}}+2\int_0^tf_\epsilon^\prime(\Vert	y(s)\Vert_2^2)(	G(\cdot,y),y)	d\mathcal{W}+L\int_0^tf_\epsilon^\prime(\Vert	y(s)\Vert_2^2)\Vert	y(s)\Vert_2^2.
\end{align*}
	Recall	that	$f^\prime_\epsilon\leq	1$,	which	ensures	that		the	stochastic	integral	is	and	$(\mathcal{F}_t)$-martingale.	Hence,	by	taking	the	expectation	and	using	similair	arguments	for	\eqref{est-invariant2},	one	has
		\begin{align*}
	&\E	f_\epsilon(\Vert	y(t)\Vert_2^2)-f_\epsilon(\Vert	y_0\Vert_2^2)+2\nu\epsilon_0 \int_0^t\E	f_\epsilon^\prime(\Vert	y(s)\Vert_2^2)\Vert \nabla y\Vert_{2}^2ds\\
	&\leq	
	(LC_{P}(C_K)^4\dfrac{\theta}{2^5}-\dfrac{\beta	\epsilon_0}{2})\int_0^t\E	f_\epsilon^\prime(\Vert	y(s)\Vert_2^2)\int_D|A( y)|^4dxds+L	 C_{P}\dfrac{1}{2\theta}\vert	D\vert	t+\dfrac{C_F}{(\beta	\epsilon_0)^{1/3}}t\Vert	F\Vert_{X^\prime}^{\frac{4}{3}}.	\end{align*}
	Choose		$\theta=\dfrac{2^3\beta\epsilon_0}{LC_P(C_K)^4}$	and	using	\eqref{poicare},	we	obtain	
	\begin{align*}
	\E	f_\epsilon(\Vert	y(t)\Vert_2^2)&+\dfrac{2\nu\epsilon_0}{C_P} \int_0^t\E	f_\epsilon^\prime(\Vert	y(s)\Vert_2^2)\Vert  y(s)\Vert_{2}^2ds+\dfrac{\beta	\epsilon_0}{4}\int_0^tf_\epsilon^\prime(\Vert	y(s)\Vert_2^2)\E\int_D|A( y)|^4dxds\\&\quad	\leq	f_\epsilon(\Vert	y_0\Vert_2^2)
	+\dfrac{(L	 C_{P})^2(C_K)^4}{2^4\beta\epsilon_0}\vert	D\vert	t+\dfrac{C_F}{(\beta	\epsilon_0)^{1/3}}\Vert	F\Vert_{X^\prime}^{\frac{4}{3}}t\leq	f_\epsilon(\Vert	y_0\Vert_2^2)
	+\mathbf{K}t,	\end{align*}
	where	$\mathbf{K}=\dfrac{(L	 C_{P})^2(C_K)^4}{2^4\beta\epsilon_0}\vert	D\vert	+\dfrac{C_F}{(\beta	\epsilon_0)^{1/3}}\Vert	F\Vert_{X^\prime}^{\frac{4}{3}}$.	Therefore
	\begin{align*}
	\E	f_\epsilon(\Vert	y(t)\Vert_2^2)+\dfrac{2\nu\epsilon_0}{C_P} \int_0^t\E	\dfrac{\Vert	y(s)\Vert_2^2}{(1+\epsilon	\Vert	y(s)\Vert_2^2)^2}	ds	\leq	f_\epsilon(\Vert	y_0\Vert_2^2)
	+\mathbf{K}t,\quad	\epsilon>0.	\end{align*}
For	$y\in	H$,	set	$F_\epsilon(y)=f_\epsilon\circ	\Vert	y\Vert_2^2	$	and	note	that	$F_\epsilon\in	C_b(H)$.	We	recall	that	$P_tF_\epsilon(y_0)=\E	F_\epsilon(y(t))$.	Let	$\mu$	be	an	invariant	measure	for	$(P_t)_t$,	by	the	definition	of	invariant	measure	for	the	semigroup	$(P_t)_t$,	we	obtain	after	integrating	with	respect	to	$\mu$
	\begin{align*}
	\dfrac{2\nu\epsilon_0}{C_P} \int_H\int_0^t\E	\dfrac{\Vert	y(s)\Vert_2^2}{(1+\epsilon	\Vert	y(s)\Vert_2^2)^2}	ds	d\mu	\leq	
	\mathbf{K}t.	\end{align*}
	Let	$g_\epsilon(x)=\dfrac{x}{(1+\epsilon	x)^2}$	and	$G_\epsilon=g_\epsilon\circ\Vert	\cdot\Vert_2^2\in	C_b(H)$.	Hence	$\E	\dfrac{\Vert	y(s)\Vert_2^2}{(1+\epsilon	\Vert	y(s)\Vert_2^2)^2}:=P_sG_\epsilon(y_0).$
		Tonelli	theorem	and	the	invariance	of	$\mu$	ensure
	\begin{align*}
\dfrac{2\nu\epsilon_0}{C_P} \int_H\int_0^t\E	\dfrac{\Vert	y(s)\Vert_2^2}{(1+\epsilon	\Vert	y(s)\Vert_2^2)^2}	ds	d\mu=\dfrac{2\nu\epsilon_0}{C_P} \int_0^t\int_H	P_sG_\epsilon(y_0)d\mu	ds=t\dfrac{2\nu\epsilon_0}{C_P} \int_H	G_\epsilon(y_0)d\mu		\leq	
\mathbf{K}t.	\end{align*}
Finally,	by	letting	$\epsilon\to	0$	and	using	monotone	convergence	theorem	we	get

\begin{align}\label{prperty-invariant-1}
 \int_H	\Vert	y\Vert_2^2\mu(dy)		\leq	
\mathbf{K}\dfrac{C_P}{2\nu\epsilon_0}.	\end{align}
Next,	we	use	the	last	inequality	\eqref{prperty-invariant-1}	to	show
	the	second	inequality	in	\eqref{prperty-invariant}.	Indeed,	define	the	following	non	decreasing	sequence	
\begin{align}\label{sequ-appro-invari}
n\in	\mathbb{N}:\quad	F_n:H\to	\mathbb{R}_+\cup\{+\infty\};\quad	u\mapsto		\begin{cases}	\Vert	u\Vert_X^4	\text{	if	}	\Vert	u\Vert_X\leq	n;
\\[0.1cm]
n^4	\text{	else.		}
\end{cases}
\end{align}
and	note	that	$F_n$	converges	to	$F_X:=\displaystyle\sup_{n}F_n$,	where	
\begin{align*}
	F_X:H\to	\mathbb{R}_+\cup\{+\infty\};\quad	u\mapsto		\begin{cases}	\Vert	u\Vert_X^4	\text{	if	}		u\in	X;
\\[0.1cm]	
+\infty	\text{	if	}	u\in	H\setminus	X.
\end{cases}
\end{align*}	
	It	is	clear		that	$F_n\in	B_b(H)$\footnote{$B_b(H)$	denotes	the	set	of	bounded	Borel	functions	on	$H$.}	for	every	$n\in	\mathbb{N}$	and	$	F_n(u)\leq	\Vert	u\Vert_X^4$.	By	using	the	invariance	of	$\mu$,	we	are	able	to	infer
\begin{align}
\int_HF_nd\mu=\int_0^T\int_H\E	F_n(y(s))d\mu	ds=\int_H\int_0^T\E	F_n(y(s))	dsd\mu,
\end{align}
From	\eqref{concentration-eqn}	and	by	using	\eqref{Korn-ineq},	we	have
\begin{align}
\dfrac{\beta	\epsilon_0}{2(C_K)^4}\int_0^T\E	\Vert	y(s)\Vert_X^4ds\leq	\dfrac{\beta	\epsilon_0}{2}\int_0^T\E\int_D|A( y)|^4dxds\leq	\Vert	y_0\Vert_2^2
+\mathbf{K}T.	\end{align}
Thus	$
\displaystyle\int_0^T\E	F_n(y(s))	ds	\leq	\int_0^T\E	\Vert	y(s)\Vert_X^4	ds	\leq	\dfrac{\beta	\epsilon_0}{2(C_K)^4}[\Vert	y_0\Vert_2^2
+\mathbf{K}T].
$
Set	$T=1$	and		integrate	with	respect	to	$\mu$	the	last	inequality,	one	has
\begin{align*}
\int_HF_n(y_0)		d\mu=\int_H\int_0^1P_sF_n(y_0)	ds	d\mu=\int_H\int_0^1\E	F_n(y(s))	ds	d\mu	\leq	\dfrac{\beta	\epsilon_0}{2(C_K)^4}[\int_H\Vert	y_0\Vert_2^2d\mu
+\mathbf{K}].
\end{align*}
	Consequently,	the	monotone	convergence	theorem		and	\eqref{prperty-invariant-1}	imply
\begin{align*}
\int_HF_X(y_0)		d\mu	\leq	\mathbf{K}\dfrac{\beta	\epsilon_0}{2(C_K)^4}[
\dfrac{C_P}{2\nu\epsilon_0}+1].
\end{align*}
In	particular,	$\mu$	is	concentrated	on	$X$	and	$\mu(X)=1$.
\end{proof}
Recall	that		an	invariant	measure	$\mu$	is	ergodic	if	$$	\displaystyle\lim_{T\to	+\infty}\dfrac{1}{T}\int_0^TP_t\varphi	dt=\int_H\varphi(x)	\mu(dx),\quad		\forall	\varphi\in	L^2(H,\mu),$$
	see	\cite[Chapter	9]{Daprato-lecture}.	Thus,
it	follows	from		Proposition	\ref{prop-ergodic}	the	following	result.
\begin{theorem}\label{Theorem-ergodic}
There exists an ergodic invariant measure for the transition semigroup	$(P_t)_t$.
\end{theorem}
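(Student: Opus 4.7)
The plan is to deduce ergodicity from the classical correspondence between ergodic invariant measures and extreme points of the convex set of invariant probability measures. Let $\mathcal{M}_{inv}$ denote the set of invariant probability measures for the semigroup $(P_t)_t$. By Theorem \ref{THm-invariant-measure-1} this set is non-empty, and convexity is immediate.

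First I would establish the compactness of $\mathcal{M}_{inv}$ in the weak topology on $\mathcal{P}(H)$. Tightness follows at once from Proposition \ref{prop-ergodic}: the bound $\int_H \Vert x\Vert_X^4 \mu(dx) \leq C$ holds with a constant $C$ depending only on the data, uniformly over $\mu \in \mathcal{M}_{inv}$. Since the embedding $X \hookrightarrow H$ is compact (by the compact Sobolev embedding $W^{1,4}(D)\hookrightarrow L^2(D)$ recorded in \eqref{Sobolev-embedding}), Chebyshev's inequality then produces, for every $\varepsilon>0$, a set $K_\varepsilon := \{u\in X : \Vert u\Vert_X \leq (C/\varepsilon)^{1/4}\}$ which is relatively compact in $H$ and satisfies $\mu(H\setminus K_\varepsilon)\leq \varepsilon$ simultaneously for all $\mu\in\mathcal{M}_{inv}$. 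Weak closedness of $\mathcal{M}_{inv}$ is obtained from the Feller property of Proposition \ref{prop-inva-1}: if $\mu_n\in\mathcal{M}_{inv}$ and $\mu_n\rightharpoonup \mu$ weakly, then for every $\varphi\in\mathcal{C}_b(H)$ we have $P_t\varphi\in\mathcal{C}_b(H)$, whence
$$\int_H \varphi\,d\mu = \lim_{n}\int_H \varphi\,d\mu_n = \lim_{n}\int_H P_t\varphi\,d\mu_n = \int_H P_t\varphi\,d\mu,$$
so $P_t^*\mu=\mu$. Prokhorov's theorem combined with this closedness shows that $\mathcal{M}_{inv}$ is a non-empty, convex, weakly compact subset of the locally convex Hausdorff space $\mathcal{P}(H)$.

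Then I would apply the Krein-Milman theorem to obtain an extreme point $\mu_* \in \mathcal{M}_{inv}$. The classical characterization (see \textit{e.g.} \cite[Theorem 3.2.4]{Daprato-ergodicity}) asserts that an invariant measure of a Markovian semigroup is ergodic if and only if it is an extreme point of the set of invariant measures. Consequently $\mu_*$ is the sought ergodic invariant measure, and its concentration on $X$ together with the quantitative bound $\int_H \Vert x\Vert_X^4 \mu_*(dx)<\infty$ is inherited from Proposition \ref{prop-ergodic}.

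The main obstacle is really the verification of compactness of $\mathcal{M}_{inv}$, as the ergodicity of extreme points is an abstract general statement; the specific structure of the third grade fluid system enters only through the uniform $X$-moment bound of Proposition \ref{prop-ergodic} and the Feller property of Proposition \ref{prop-inva-1}, which yield tightness and weak closedness respectively. No additional dissipation or irreducibility estimate is required at this level, which is consistent with the fact that the theorem asserts only the existence of some ergodic invariant measure, not its uniqueness.
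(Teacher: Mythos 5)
Your proposal is correct and follows essentially the same route as the paper: uniform tightness of the set of invariant measures from the $X$-moment bound of Proposition \ref{prop-ergodic} together with the compact embedding $X\hookrightarrow H$, then Krein--Milman and the identification of ergodic measures with extreme points. The only difference is that you explicitly verify weak closedness of the set of invariant measures via the Feller property before invoking compactness, a step the paper leaves implicit; this is a welcome addition but not a different argument.
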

\begin{proof}

Denote	by	$	\Lambda$,		the	set	of	all	invariant	measures	for	the	Markov	semigroup	$(P_t)_t$	defined	by		\eqref{transition-prob}.		From	Theorem	\ref{THm-invariant-measure},
$\Lambda$	is	nonempty	convex	subset	of	$(C_b(H))^\prime$	and	\eqref{prperty-invariant}		ensures	that	$\Lambda$	is	tight,	since	$X\underset{compact}{\hookrightarrow}	H$.	Therefore,	Krein–Milman theorem		ensures	that		the	set	of	extreme points	is	non	empty	and	then	any	 extremal point of $\Lambda$ is an ergodic 	invariant	measure,	since
the	set	of	all	invariant	ergodic	measures	of	$(P_t)_t$	coincides	with	the	set	of	all	extremal	points	of	$\Lambda$,	see	\textit{e.g.}	\cite[Theorem	5.18]{Daprato-lecture}.
\end{proof}

\subsection*{Acknowledgements}
	This work is funded by national funds through the FCT - Funda\c c\~ao para a Ci\^encia e a Tecnologia, I.P., under the scope of the projects UIDB/00297/2020 and UIDP/00297/2020 (Center for Mathematics and Applications).




\begin{thebibliography}{}
	
		\bibitem{AC97} C. Amrouche and  D. Cioranescu. 
	\newblock\textit{On a class of fuids of grade 3.}
	\newblock  Int. J. Non-Linear Mech. 32 (1),  73--88,  1997.
	
	\bibitem{Bensoussan95}
	A.	Bensoussan. 
	\newblock Stochastic navier-stokes equations. 
	\newblock\textit{	Acta Applicandae Mathematica,} 38, 267-304,	1995.
	
	
	
	
	\bibitem{Breckner}
	H.I. Breckner. \newblock\textit{Approximation and optimal control of the stochastic Navier-Stokes equations.} \newblock	Ph.D. Thesis, Halle (Saale),	1999.
	
	\bibitem{Brez2013}
	Z. Brze\'zniak and E. Motyl. 
	\newblock\textit{Existence of a martingale solution of the stochastic
		Navier–Stokes equations in unbounded 2D and 3D domains.} 
	\newblock  J. Differential Equations 254
	1627-1685,	2013.
	
	
	
	\bibitem{Brez2017}
Z,	Brze\'zniak,  E. Motyl and M. Ondrejat.
	\newblock\textit{Invariant measure for the stochastic Navier–Stokes equations in unbounded 2D domains.} 
\newblock	The Annals of Probability	Vol. 45, No. 5,  3145-3201,	2017.
	
	
	
	\bibitem{Bus-Ift-1}
	A.V. Busuioc and  D. Iftimie.
	\newblock\textit{Global existence and uniqueness of solutions for the equations of third grade fluids.}
	\newblock Int. J.Non-Linear Mech. 39, 1–12,	2004.
	
	\bibitem{Bus-Ift-2}
	A.V. Busuioc and D. Iftimie.
	\newblock\textit{ A non-Newtonian fluid with Navier boundary conditions.}
	\newblock J. Dyn. Differ. Equ. 18 (2) 357–379,	2006.
	
	
	\bibitem{Busuioc-Iftimie-Paicu08}
	A. V.	Busuioc,  D.	Iftimie	and M.	Paicu. 
	\newblock\textit{  On steady third grade fluids equations.}
	\newblock Nonlinearity, 21(7), 1621,	2008.
	
	
	
	
	\bibitem{Cip-Did-Gue}
	F. Cipriano, P.Didier and S. Guerra.
	\newblock\textit{Well-posedness of stochastic third grade fluid equation.}
	\newblock Journal of Differential Equations, Vol. 285, p. 496-535,  2021

\bibitem{Cioran2016}	
D.	Cioranescu,  V.	Girault and K. R.	Rajagopal. 
\newblock\textit{	Mechanics and mathematics of fluids of the differential type.} \newblock	Vol. 35. Cham: Springer,	2016.
	
	\bibitem{Daprato-lecture}
	G.	Da Prato. 
	\newblock	An introduction to infinite-dimensional analysis.
	\newblock	\textit{Springer Science $\&$ Business Media,}	2006.
	
		\bibitem{Daprato}
	G.	Da Prato	and  J.	Zabczyk.
	\newblock\textit{Stochastic Equations in Infinite Dimensions.}
	\newblock Encyclopedia Math. Appl. Vol. 44. Cambridge: Cambridge University Press 1992.	
	
	\bibitem{Daprato-ergodicity}
	G.	Da Prato	and  J.	Zabczyk.
	\newblock	Ergodicity for Infinite Dimensional Systems.
	\newblock	\textit{Cambridge University		Press, Cambridge,	} 1996.
			
			
			\bibitem{Debussche11}
			A.	Debussche,	N. Glatt-Holtz  \& R.	Temam. 
			\newblock	Local martingale and pathwise solutions for an abstract fluids model. \newblock	Physica D: Nonlinear Phenomena, 240, 1123-1144,	2011.
			
			
			\bibitem{DR95} J. E. Dunn and  K. R. Rajagopal.
			\newblock\textit{Fluids of	differential type: Critical review and thermodynamical analysis}. 
			\newblock Int J.Eng. Sci. 33,  689--729, 1995. 
			
		\bibitem{Edwards}
		R. E. Edwards.
		\newblock\textit{Functional analysis} \newblock (New York: Dover Publications Inc., 1995.
	
	
	\bibitem{Flan-94}	
		F.	Flandoli.
		\newblock\textit{Dissipativity and invariant measures for stochastic Navier-Stokes equations.}
	\newblock	 Nonlinear Differential Equations and Applications, 1(4), 403-423,	1994.
		
	\bibitem{Flan-Gater}
	F.	Flandoli and  D. Gatarek.
	\newblock\textit{ Martingale and stationary solutions for stochastic Navier-Stokes
		equations.} 
	\newblock Probab. Theory Relat. Fields 102:367-391,	1995.
	
	
	\bibitem{FR80} R.L. Fosdick and , K.R. Rajagopal.
	\newblock\textit{Thermodynamics and stability of fluids of third grade.} \newblock Proc. Roy. Soc. London Ser. A  339 , 351--377, 1980.
	
	\bibitem{Guo2010}
	B.Guo,  C.	Guo and	 J.Zhang. 
	\newblock\textit{Martingale and stationary solutions for stochastic non-Newtonian fluids.}
	\newblock	Differential Integral Equations 23(3/4): 303-326,	2010.
	
	\bibitem{Hairer2006}
		M.	Hairer 	and J. C.	Mattingly. 
	\newblock\textit{Ergodicity of the 2D Navier-Stokes equations with degenerate stochastic forcing.}
\newblock	 Annals of Mathematics, 993-1032,	2006.
	

\bibitem{Paicu}
M.	Hamza and M. Paicu. \newblock\textit{Global existence and uniqueness result of a class of third-grade fluids equations.	}\newblock	Nonlinearity 20(5),  1095,	2007.


\bibitem{Hausenblas2016}
	E.	Hausenblas	and	 P. A.	Razafimandimby.
\newblock\textit{On stochastic evolution equations for nonlinear bipolar fluids: Well-posedness and some properties of the solution.}
 \newblock	Journal of mathematical analysis and applications, 441(2), 763-800,	2016.
	
	
	\bibitem{Jakubowski97}
	A.~Jakubowski. 
	\newblock	The almost sure Skorokhod representation for subsequences in nonmetric spaces.	\newblock {Theory of Probability $\&$ Its Applications}, 42(1), 167-174,	1998.
	
	\bibitem{Ladyzhenskaya67}
O. A.	Ladyzhenskaya.
  	\newblock \textit{New equations for the description of the motions of viscous incompressible fluids, and global solvability for their boundary value problems.}
 \newblock Tr. Mater. Inst. Steklov 102 95-118,	1967.
	
	
	\bibitem{Liu-Rock}
	W. Liu  and  M. Rockner.
	\newblock \textit {Stochastic Partial Differential Equations: An Introduction.}
	\newblock Springer International Publishing Switzerland, 2015.
	
	
		\bibitem{Nguyen-Tawri-Temam21}
	P.	Nguyen,  K.	Tawri and R.Temam. 
	\newblock\textit{Nonlinear stochastic parabolic partial differential equations with a monotone operator of the Ladyzenskaya-Smagorinsky type, driven by a Lévy noise.}
	\newblock	Journal of Functional Analysis, 281(8), 109157,	2021.
	
	\bibitem{Raya-Fernanda-Yassine23}
	R.
	Nouira, F. Cipriano	and Y. Tahraoui.
	\newblock\textit{On the Existence and long time behaviour of $H^1$-Weak Solutions for $2,3d$-Stochastic 3rd-Grade Fluids Equations.}
	\newblock	arXiv preprint  arXiv:2311.14596,  2023.
	
	
	
	\bibitem{Ondrejat}
	M. Ondreját. 
	\newblock\textit{Uniqueness for stochastic evolution equations in Banach spaces.} \newblock	Dissertationes Math. 426  1–63,	2004.
	
	\bibitem{Paicu2008} 
	M.	Paicu.  
	\newblock \textit {Global existence in the energy space of the solutions of a non-Newtonian fluid.} 
	\newblock	Physica D: Nonlinear Phenomena, 237(10-12), 1676-1686,	2008.
	
	
	\bibitem{Pardoux}
	E. Pardoux.
	\newblock \textit{Equations aux D\'eriv\'ees Partielles Stochastiques Non Lin\'eaires Monotones.}
	\newblock Th\`ese, Paris Sud-Orsay, 1975.
	
	\bibitem{PP19} 
	M. Parida and  S. Padhy.
	\newblock\textit{Electro-osmotic flow of a third-grade fluid past a channel having stretching walls.} 
	\newblock Nonlinear Eng. 8(1), 56--64, 2019.
	
	
	
	
	
	
	

	\bibitem{RHK18}
	G.J. Reddy, A. Hiremath, M. Kumar.
	\newblock\textit{Computational modeling of unsteady third-grade fluid flow
		over a vertical cylinder: A study of heat transfer visualization.}
	\newblock Results Phys. 8, 671--682, 2018.
	
	
	
	
	\bibitem{RE55}
	R. S. Rivlin, J. L. Ericksen,
	\newblock\textit{Stress-deformation relations for isotropic materials.}
	\newblock Arch. Rational Mech. Anal. 4,  323--425, 1955.
	
	
	
	
	
	
	
	
	\bibitem{Roubicek}
	T. Roubi\~cek.
	\newblock \textit {Nonlinear Partial Differential Equations with Applications.}
	\newblock Birkh{\"a}user Verlag Basel-Boston-Berlin, 2005.
	
	\bibitem{NYGA}
		N. Sapountzoglou, Y. Tahraoui, G. Vallet	and A. Zimmermann
		\newblock\textit{Stochastic pseudomonotone parabolic obstacle problem: well-posedness $\&$ Lewy-Stampacchia's inequalities,	}
			arXiv preprint  arXiv: 2305.16090,  2023.	
	
	\bibitem{SV95} A. Sequeira and  J. Videman. 
	\newblock\textit{Global existence of classical
		solutions for the equations of third grade fuids.}
	\newblock J. Math. Phys. Sci. 29 (2), 47–69, 1995.
		
	
	
	
		\bibitem{yas-fer}
	Y. Tahraoui and  F. Cipriano.
	\newblock\textit{Optimal control of two dimensional third grade fluids.}
	\newblock Journal of Mathematical Analysis and Applications, 
	523,  2,	127032, 2023.	\url{https://doi.org/10.1016/j.jmaa.2023.127032}
	
	\bibitem{yas-fer-2}
	Y. Tahraoui and  F. Cipriano.
	\newblock\textit{Local strong solutions  to the stochastic  third grade fluids equations with Navier boundary conditions.} 
	\newblock  Stoch PDE: Anal Comp,	2023. \url{https://doi.org/10.1007/s40072-023-00314-9}.
	
	
	
	\bibitem{Tah-Cip-2} 
	Y. Tahraoui and  F. Cipriano.
	\newblock\textit{Optimal control of third grade fluids with multiplicative noise,}
	\newblock	 arXiv preprint  arXiv:2306.13231,	2023.	
	
	\bibitem{Vallet-Zimm1}
	G. Vallet and   A. Zimmermann. 
	\newblock\textit{Well-posedness for nonlinear SPDEs with strongly continuous perturbation.}
	\newblock Proceedings of the Royal Society of Edinburgh: Section A Mathematics, 151(1), 265-295, 2021.  
	
	\bibitem{Zhao2014}
	C.	Zhao, Y. Liang	and	M.	Zhao.
	\newblock\textit{Upper and lower bounds of time decay rate of solutions to a class of incompressible third grade fluid equations.}
\newblock	 Nonlinear Analysis: Real World Applications, 15, 229-238,	2014.
	
	
	
\end{thebibliography}
\end{document}